\theoremstyle{plain}
  \newtheorem{thm}{Theorem}[section]
  \newtheorem{prop}[thm]{Proposition}
  \newtheorem{lem}[thm]{Lemma}
  \newtheorem{cor}[thm]{Corollary}
\theoremstyle{definition}
  \newtheorem{defn}[thm]{Definition}
  \newtheorem{ex}[thm]{Example}
  \newtheorem{rem}[thm]{Remark}
\def\setmin{{-}}
\def\lift{\hat}
\def\R{\mathbb{R}}
\def\Z{\mathbb{Z}}
\def\AAA{\mathcal{A}}
\def\LLL{\mathcal{L}}
\DeclareMathOperator{\Acyc}{Acyc}
\DeclareMathOperator{\Hasse}{Hasse}
\DeclareMathOperator{\tor}{tor}
\DeclareMathOperator{\torHasse}{torHasse}
\DeclareMathOperator{\aff}{aff}
\DeclareMathOperator{\Chambers}{Cham}
\DeclareMathOperator{\Faces}{Face}
\DeclareMathOperator{\Pre}{Pre}
\DeclareMathOperator{\Cox}{C}
\DeclareMathOperator{\Conj}{Conj}
\DeclareMathOperator{\cl}{cl}
\def\into{\hookrightarrow}
\def\longto{\longrightarrow}
\def\epsilon{\varepsilon}
\def\ll{\prec}
\def\lleq{\preceq}
\def\<{\langle}
\def\>{\rangle}
\def\O{\mathcal{O}}
\def\P{\pi}
\def\p{p} 
\def\q{q} 
\def\r{r}
\def\quot{q}
\tikzset{big arrow/.style={decoration={markings,mark=at position 0.93
      with {\arrow[scale=2]{>}}}, postaction={decorate}, shorten >=8pt}}
\tikzset{big dashed arrow/.style={decoration={markings,mark=at position 0.93
      with {\arrow[scale=2]{>}}}, postaction={decorate}, shorten >=8pt, dashed}}
\newcommand\modone[1]{{{#1} \bmod 1}}
\begin{document}

\title{Morphisms and Order Ideals of Toric Posets}

\author[M.~Macauley]{Matthew Macauley} \address{Department of
  Mathematical Sciences \\ Clemson University \\ Clemson, SC 29634-0975, USA \newline\indent Department of Mathematics and Computer Science (IMADA) \\ University of Southern Denmark \\ \newline\indent DK-5230 Odense M}

\email{macaule@clemson.edu}

\thanks{The author was supported by Simons Foundation Collaboration Grant
  \#246042 and NSF grant DMS-1211691.}

\keywords{Coxeter group; cyclic order; cyclic reducibility; morphism; partial order; preposet; order ideal; order-preserving map; toric hyperplane arrangement; toric poset}

\begin{abstract} 
  Toric posets are in some sense a natural ``cyclic'' version of
  finite posets in that they capture the fundamental features of a
  partial order but without the notion of minimal or maximal
  elements. They can be thought of combinatorially as equivalence
  classes of acyclic orientations under the equivalence relation
  generated by converting sources into sinks, or geometrically as
  chambers of toric graphic hyperplane arrangements. In this paper, we
  define toric intervals and toric order-preserving maps, which lead
  to toric analogues of poset morphisms and order ideals. We develop
  this theory, discuss some fundamental differences between the toric
  and ordinary cases, and outline some areas for future
  research. Additionally, we provide a connection to cyclic
  reducibility and conjugacy in Coxeter groups. 
\end{abstract}

\maketitle


\section{Introduction}\label{sec:intro}

A finite poset can be described by at least one directed acyclic graph
where the elements are vertices and directed edges encode
relations. We say ``at least one'' because edges implied by
transitivity may be present or absent. The operation of converting a source
vertex into a sink generates an equivalence relation on finite posets
over a fixed graph. Equivalence classes are called \emph{toric
  posets}. These objects have arisen in a variety of contexts in the
literature, including but not limited to chip-firing
games~\cite{ErikssonK:94}, Coxeter
groups~\cite{Eriksson:09,Shi:01,Speyer:09}, graph
polynomials~\cite{Chen:10}, lattices~\cite{Ehrenborg:09b, Propp:93},
and quiver representations~\cite{Marsh:03}. These equivalence classes
were first formalized as toric posets in~\cite{Develin:15}, where the
effort was made to develop a theory of these objects in conjunction
with the existing theory of ordinary posets. The name ``toric poset''
is motivated by a bijection between toric posets over a fixed
(undirected) graph and chambers of the toric graphic hyperplane
arrangement of that graph. This is an analogue to the well-known
bijection between ordinary posets over a fixed graph $G$ and chambers
of the graphic hyperplane arrangement of $G$, first observed by Greene \cite{greene1977acyclic} and later extended to signed graphs by Zaslavsky \cite{zaslavsky1991orientation}.

Combinatorially, a poset over a graph $G$ is determined by an \emph{acyclic orientation} $\omega$ of $G$. We denote the resulting poset by $P(G,\omega)$. A toric poset over $G$ is determined by an acyclic orientation, up to the \emph{equivalence} generated by converting sources into sinks. We denote this by $P(G,[\omega])$. Though most standard features of posets have elegant geometric interpretations, this viewpoint is usually unnecessary. In contrast, for most features of toric posets, i.e., the toric analogues of standard posets features, the geometric viewpoint is needed to see the natural proper definitions and to prove structure theorems. Once this is done, the definitions and characterizations frequently have simple combinatorial (non-geometric) interpretations. 

To motivate our affinity for the geometric approach, consider one of the fundamental hallmarks of an ordinary poset $P$: its binary relation, $<_P$. Most of the classic features of posets (chains, transitivity, morphisms, order ideals, etc.) are defined in terms of this relation. Toric posets have no such binary relation, and so this is why we \emph{need} to go to the geometric setting to define the basic features. Perhaps surprisingly, much of the theory of posets carries over to the toric setting despite the absence of a relation, and current toric poset research strives to understand just how much and what does carry over. As an analogy from a different area of mathematics, topology can be thought of as ``analysis without the metric.'' A fundamental hallmark of a metric space is its distance function. Many of the classic features of metric spaces, such as open, closed, and compact sets, and continuous functions, are defined using the distance function. However, once one establishes an equivalent characterization of continuity in terms of the inverse image of open sets, many results can be proven in two distinct ways: via epsilon-delta proofs, or topologically. When one moves from metric spaces to topological spaces, one loses the distance function and all of the tools associated with it, so this first approach goes out the window. Remarkably, much of the theory of real analysis carries over from metric spaces to topological spaces. Back to the poset world, one can prove theorems of ordinary posets using either the binary relation or the geometric definitions. However, upon passing to toric posets, the binary relation and all of the tools associated with it are lost, so one is forced to go to the geometric setting. Remarkably, much of the theory of ordinary posets still carries over to toric posets. This analogy is not perfect, because toric posets are not a generalization of ordinary posets like how topological spaces extend metric spaces. However, it should motivate the reliance on geometric methods throughout this paper.

An emerging theme of toric poset structure theorems, from both the original paper~\cite{Develin:15} and this one, is that characterizations of toric analogues, when they exist, usually have one of two forms. In one, the feature of a toric poset $P(G,[\omega])$ is characterized by it being the analogous feature of the ordinary poset $P(G,\omega')$, \emph{for all} $\omega'\in[\omega]$. In the other, the feature of $P(G,[\omega])$ is characterized by it being the analogous feature of $P(G,\omega')$ \emph{for some} $\omega'\in[\omega]$. Several examples of this are given below. It is not obvious why this should happen or which type of characterization a given toric analogue should have \emph{a priori}. Most of these are results from this paper, and so this list provides a good overview for what is to come.

\noindent The ``\emph{For All}'' structure theorems:
\begin{itemize}
\item A set $C\subseteq V$ is a toric chain of $P(G,[\omega])$ iff $C$ is a chain of $P(G,\omega')$ for all $\omega'\in[\omega]$. (Proposition~\ref{prop:toric-chains})
\item The edge $\{i,j\}$ is in the toric transitive closure of $P(G,[\omega])$ iff $\{i,j\}$ is in the transitive closure of $P(G,\omega')$ for all $\omega'\in[\omega]$. (Proposition~\ref{prop:Hasse})
\end{itemize}

\noindent The ``\emph{For Some}'' structure theorems:
\begin{itemize}
  \item A partition $\pi\in\Pi_V$ is a closed toric face partition of $P(G,[\omega])$ iff $\pi$ is a closed face partition of $P(G,\omega')$ for some $\omega'\in[\omega]$. (Theorem~\ref{thm:toric-face-partitions})
  \item A set $A\subseteq V$ is a (geometric) toric antichain of $P(G,[\omega])$ iff $A$ is an antichain of $P(G,\omega')$ for some $\omega'\in[\omega]$. (Proposition~\ref{prop:toric-antichains})
\item If a set $I\subseteq V$ is a toric interval of $P(G,[\omega])$, then $I$ is an interval of $P(G,\omega')$ for some $\omega'\in[\omega]$. (Proposition~\ref{prop:toric-intervals})
  \item A set $J\subseteq V$ is a toric order ideal of $P(G,[\omega])$ iff $J$ is an order ideal of $P(G,\omega')$ for some $\omega'\in[\omega]$. (Proposition~\ref{prop:toric-ideals})
  \item Collapsing $P(G,[\omega])$ by a partition $\pi\in\Pi_V$ is a morphism of toric posets iff collapsing $P(G,\omega')$ by $\pi$ is a poset morphism for some $\omega'\in[\omega]$. (Corollary~\ref{cor:toric-morphism})
\item If an edge $\{i,j\}$ is in the Hasse diagram of $P(G,\omega')$ for some $\omega'\in[\omega]$, then it is in the toric Hasse diagram of $P(G,[\omega])$. (Proposition~\ref{prop:Hasse})
\end{itemize}

This paper is organized as follows. In the next section, we formally
define posets and preposets and review how to view them geometrically
in terms of faces of chambers of graphic hyperplane arrangements. In
Section~\ref{sec:poset-morphisms}, we translate well-known properties
of poset morphisms to this geometric setting. In
Section~\ref{sec:toric-posets}, we define toric posets and preposets
geometrically in terms of faces of chambers of toric hyperplane
arrangements, and we study the corresponding ``toric face partitions'' and the bijection between toric preposets and
lower-dimensional faces. In
Section~\ref{sec:toric-intervals}, we define the notion of a toric
interval and review some features of toric posets needed for toric
order-preserving maps, or morphisms, which are finally presented in
Section~\ref{sec:toric-morphisms}. In Section~\ref{sec:toric-ideals},
we introduce toric order ideals and filters, which are essentially the
preimage of one element upon mapping into a two-element toric
poset. The toric order ideals and filters of a toric poset turn out to coincide. They form a graded poset $J_{\tor}(P)$, but unlike the ordinary case, this need not be a lattice. In Section~\ref{sec:coxeter}, we provide a connection of this theory to Coxeter groups, and then we conclude with a summary and discussion of current and future research in Section~\ref{sec:conclusion}.

\section{Posets geometrically}\label{sec:posets}

\subsection{Posets and preposets}\label{subsec:preposets}

A \emph{binary relation} $R$ on a set $V$ is a subset $R\subseteq
V\times V$. A \emph{preorder} or \emph{preposet} is a binary relation
that is reflexive and transitive. This means that $(x,x)\in R$ for all
$x\in V$, and if $(x,y),(y,z)\in R$, then $(x,z)\in R$. We will use
the notation $x\lleq_R y$ instead of $(x,y)$, and say that $x\ll_Ry$
if $x\lleq_R y$ and $y\not\lleq_R x$. Much of the basics on preposets
can be found in~\cite{Postnikov:08}.

An \emph{equivalence relation} is a preposet whose binary relation is
symmetric. For any preposet $P$, we can define an equivalence relation
$\sim_P$ on $P$ by saying $x\sim_P y$ if and only if $x\lleq_P y$ and
$y\lleq_P x$ both hold. A \emph{partially ordered set}, or
\emph{poset}, is a preposet $P$ such that every $\sim_P$-class has
size $1$. We say that a preposet is \emph{acyclic} if it is also a
poset.

Every preorder $P$ over $V$ determines a directed graph $\omega(P)$
over $V$ that contains an edge $i\to j$ if and only if
$i\lleq_P j$ and $i\neq j$. Not every directed graph arises from such
a preorder, since edge transitivity is required. That is, if $i\to j$
and $j\to k$ are edges, then $i\to k$ must also be an edge. However,
any directed graph can be completed to a transitive graph via
transitive closure, which adds in all ``missing'' edges. Note that the
graph $\omega(P)$ is acyclic if and only if $P$ is a poset. The
strongly connected components are the $\sim_P$-classes, and so the
quotient $\omega(P)/\!\!\sim_P$ is acyclic and $P/\!\!\sim_P$ inherits
a natural poset structure from $P$.

If $R_1$ and $R_2$ are preorders on $V$, then we can define their
union $R_1\cup R_2$ as the union of the subsets $R_1$ and $R_2$ of
$V\times V$. This need not be a preorder, but its \emph{transitive
  closure} $\overline{R_1\cup R_2}$ will be.

Another way we can create a new preorder from an old one is by an
operation called contraction. Given a binary relation $R\subseteq
V\times V$, let $R^{op}$ denote the \emph{opposite} binary relation,
meaning that $(x,y)\in R^{op}$ if and only if $(y,x)\in R$. If $P$ and $Q$ are preposets on $V$, then $Q$ is a \emph{contraction} of $P$ if there is a binary relation $R\subseteq P$ such that $Q=\overline{P\cup R^{op}}$. Intuitively, each added edge $(x,y)\in R^{op}$ forces $x\sim_Q y$ because $(y,x)\in P\subseteq Q$ by construction. Note that in this context, contraction is a \emph{different} concept than what it often means in graph theory -- modding out by a subset of vertices, or ``collapsing'' a set of vertices into a single vertex.

\subsection{Chambers of hyperplane arrangements}\label{subsec:chambers}

It is well known that every finite poset corresponds to a chamber of a
graphic hyperplane arrangement~\cite{Wachs:07}. This correspondence
will be reviewed here. Let $P$ be a poset over a finite set
$V=[n]:=\{1,\dots,n\}$. This poset can be identified with the
following open polyhedral cone in $\R^V$:
\begin{equation}
  \label{cone-of-a-poset-defn}
  c=c(P):=\{x\in\R^V:x_i<x_j\text{ if }i<_Pj\}.
\end{equation}
It is easy to see how the cone $c$ determines the poset $P=P(c)$: one
has $i <_P j$ if and only if $x_i<x_j$ for all $x$ in $c$. Each such
cone $c$ is a connected component of the complement of the graphic
hyperplane arrangement for at least one graph $G=(V,E)$. In this case,
we say that $P$ is a \emph{poset over $G$} (or ``on $G$''; both are
used interchangeably). Given distinct vertices $i,j$ of a simple graph
$G$, the hyperplane $H_{ij}$ is the set
\[
  H_{ij}:=\{x\in\R^V:x_i=x_j\}\,.
\]
The {\it graphic arrangement} of $G$ is the set $\AAA(G)$ of all
hyperplanes $H_{ij}$ in $\R^V$ where $\{i,j\}$ is in $E$. Under a
slight abuse of notation, at times it is convenient to refer to
$\AAA(G)$ as the set of points in $\R^V$ on the hyperplanes, as
opposed to the actual finite set of hyperplanes themselves. It should
always be clear from the context which is which.

Each point $x=(x_1,\ldots,x_n)$ in the {\it complement} $\R^V \setmin
\AAA(G)$ determines an {\it acyclic orientation} $\omega(x)$ of the
edge set $E$: direct the edge $\{i,j\}$ in $E$ as $i\to j$ if and only
if $x_i<x_j$. Clearly, the fibers of the mapping
$\alpha_G:x\longmapsto\omega(x)$ are the \emph{chambers} of the
hyperplane arrangement $\AAA(G)$. Thus, $\alpha_G$ induces a bijection
between the set $\Acyc(G)$ of acyclic orientations of $G$
and the set $\Chambers\AAA(G)$ of chambers of $\AAA(G)$:
\begin{equation}
  \label{eq:alpha-diagram1}
  \xymatrix{\R^V\setmin\AAA(G) \ar@{>>}[dr] \ar[rr]^{\alpha_G} & & \Acyc(G) \\
    & \Chambers\AAA(G) \ar@{.>}[ur] &\\ }
\end{equation}

We denote the poset arising from an acyclic orientation
$\omega\in\Acyc(G)$ by $P=P(G,\omega)$. An open cone $c=c(P)$ may be a
chamber in several graphic arrangements, because adding or removing
edges implied by transitivity does not change the
poset. Geometrically, the hyperplanes corresponding to these edges do
not cut $c$, though they intersect its boundary.  Thus, there are, in
general, many pairs $(G,\omega)$ of a graph $G$ and acyclic
orientation $\omega$ that lead to the same poset
$P=P(G,\omega)=P(c)$. Fortunately, this ambiguity is not too bad, in
that with respect to inclusion of edge sets, there is a unique minimal
graph $\hat{G}^{\Hasse}(P)$ called the \emph{Hasse diagram} of $P$ and
a unique maximal graph $\bar{G}(P)$, where $(G,\omega)\longmapsto
(\bar{G}(P),\bar{\omega})$ is \emph{transitive closure}.

Given two posets $P,P'$ on a set $V$, one says that $P'$ is an
\emph{extension} of $P$ when $i<_Pj$ implies
$i<_{P'}j$. Geometrically, $P'$ is an extension of $P$ if and only if
$c(P')\subseteq c(P)$.  Moreover, $P'$ is a \emph{linear extension} if
$c(P')$ is a chamber of $\AAA(K_V)$, where $K_V$ is the complete graph.

\subsection{Face structure of chambers}

Let $\pi=\{B_1,\dots,B_r\}$ be a partition of $V$ into nonempty
blocks. The set $\Pi_V$ of all such partitions has a natural poset
structure: $\P\leq_V\P'$ if every block in $\pi$ is contained in some
block in $\P'$. When this happens, we say that $\pi$ is \emph{finer}
than $\P'$, or that $\P'$ is \emph{coarser} than $\P$. 

Intersections of hyperplanes in $\AAA(G)$ are called \emph{flats}, and
the set of flats is a lattice, denoted $L(\AAA(G))$. Flats are
partially ordered by \emph{reverse inclusion}: If $X_1\subseteq X_2$,
then $X_2\leq_L X_1$. Every flat of $\AAA(G)$ has the form
\[
D_\pi:=\{x\in\R^V:\text{ $x_i=x_j$ for every pair $i,j$ in the same
  block $B_k$ of $\pi$}\}\,,
\]
for at least one partition $\pi$ of $V$. Note that $D_\pi\leq_L D_{\pi'}$ if and only if $\pi\leq_V\pi'$; this should motivate the convention of partially ordering $L(\AAA(G))$ by reverse inclusion.

Given a poset $P=P(G,\omega)$ over $G=(V,E)$, a partition $\pi$ of $V$ defines a preposet
$P_\pi:=(\pi,\leq_P)$ on the blocks, where $B_i\leq_P B_j$ for $x\leq_P
y$ for some $x\in B_i$ and $y\in B_j$ (and taking the transitive closure). This defines a directed graph $\omega/\!\!\sim_\pi$, formed by collapsing out each block $B_i$ into a single vertex. Depending on the context, we may use $P_\pi$ or $\omega/\!\!\sim_\pi$ interchangeably.
If this preposet is acyclic (i.e., if $P_\pi$ is a poset, or equivalently, the directed graph $\omega/\!\!\sim_\pi$ is acyclic), then we say that $\pi$ is \emph{compatible} with $P$. In this case, there is a canonical surjective poset morphism $P\to P_\pi$. We call such a morphism a \emph{quotient}, as to distinguish it from inclusions and extensions which are inherently different.

Compatibility of partitions with respect to a poset can be characterized by a closure operator on $\Pi_V$. If $P_\pi=(\pi,\leq_P)$ is a preposet that is not acyclic, then there is a unique minimal coarsening $\cl_P(\pi)$ of $\pi$
such that the contraction $(\cl_P(\pi),\leq_P)$ is acyclic. This
is the partition achieved by merging all pairs of blocks $B_i$ and
$B_j$ such that $B_i\sim_P B_j$, and we call it the \emph{closure}
of $\pi$ with respect to $P$. If $P$ is understood, then we may write
this as simply $\bar{\pi}:=\cl_P(\pi)$. A partition $\pi$ is \emph{closed} (with respect to $P$) if $\bar\pi=\pi$, which is equivalent to being compatible with respect to $P$. Geometrically, it means that for any $i\neq j$, there is some $x\in \overline{c(P)}\cap D_\pi$ such that $x_{b_i}\neq x_{b_j}$ for some $b_i\in B_i$ and $b_j\in B_j$. If $\pi$ is not closed, then the polyhedral face $\overline{c(P)}\cap D_\pi$ has strictly lower dimension than $D_\pi$. In this case, $\bar\pi$ is the unique coarsening that is closed with respect to $P$ and satisfies $\overline{c(P)}\cap D_{\bar\pi}=\overline{c(P)}\cap D_\pi$.

Still assuming that $P$ is a poset over $G=(V,E)$, and $\pi$ is a partition of $V$, define 
\begin{equation}\label{eq:F_pi}
  \bar{F}_\pi(P):=\overline{c(P)}\cap D_\pi\,.
\end{equation}
If $D_\pi$ is a flat of $\AAA(\bar{G}(P))$, then $\bar{F}_\pi(P)$ is a face of the (topologically) closed polyhedral cone
$\overline{c(P)}$. In this case, we say that $\pi$ is a \emph{face
  partition} of $P$. Since it is almost always clear what $P$ is, we will usually write $\bar{F}_\pi$ instead of $\bar{F}_\pi(P)$. If $D_\pi$ is not a flat of $\AAA(G)$, then the subspace $D_\pi$ still intersects $\overline{c(P)}$ in at least the line $x_1=\cdots=x_n$. Though this may intersect in the interior of $c(P')$, it is a face of $\overline{c(P')}\subseteq\overline{c(P)}$, for at least one extension $P'$ of $P$.
  
  To characterize the facial structure of the cone $\overline{c(P)}$, it suffices to characterize the closed face partitions. This is well known -- it was first described by Geissinger~\cite{Geissinger:81}, and also done by Stanley~\cite{Stanley:86} in the characterization of the face structure of the \emph{order polytope} of a poset, defined by
\begin{equation}\label{eq:order-polytope}
  \O(P)=\{x\in[0,1]^V: x_i\leq x_j\;\text{ if }\;i\leq_P j\}\,.
\end{equation}
Clearly, if $\pi$ is a closed face partition of $P$, then the
subposets induced by the individual blocks are connected (that is,
their Hasse diagrams are connected). We call such a partition
\emph{connected}, with respect to $P$.

\begin{thm}{(\cite{Stanley:86}, Theorem 1.2)}\label{thm:face-partitions}
  Let $P$ be a poset over $G=(V,E)$. A partition $\pi$ of $V$ is a
  closed face partition of $P$ if and only if it is connected and compatible with $P$.
\end{thm}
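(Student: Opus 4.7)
The plan is to decompose the biconditional into two independent pieces, matching each of ``compatible'' and ``connected'' with one of the two ingredients in the definition of a closed face partition. Recall that $\pi$ being a closed face partition of $P$ amounts to (i) $\pi$ is closed, i.e.\ $\bar\pi=\pi$, and (ii) $D_\pi$ is a flat of $\AAA(\bar{G}(P))$, so that $\bar F_\pi(P)=\overline{c(P)}\cap D_\pi$ is actually a face of $\overline{c(P)}$. I would prove ``compatible $\Leftrightarrow$ closed'' and ``connected $\Leftrightarrow$ $D_\pi\in L(\AAA(\bar{G}(P)))$'' separately and then combine.

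The first equivalence is essentially built into the definitions. The quotient preposet $P_\pi=(\pi,\leq_P)$ is a poset, i.e.\ $\pi$ is compatible with $P$, precisely when no pair of distinct blocks satisfies $B_i\sim_P B_j$, and this is exactly the condition $\cl_P(\pi)=\pi$ defining closedness, as is already noted in the paragraph preceding the theorem.

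For the second equivalence I would invoke the standard description of flats of a graphic arrangement: $D_\pi$ is a flat of $\AAA(G)$ if and only if every block of $\pi$ induces a connected subgraph of $G$. This holds because any flat has the form $\bigcap_{e\in S}H_e=D_{\pi(S)}$, where $\pi(S)$ is the connected-components partition of $(V,S)$; conversely, if each block is connected in $G$, one can take $S$ to be a spanning forest on the blocks. Specializing to $G=\bar{G}(P)$, the induced subgraph on a block $B$ is the comparability graph of the subposet $P|_B$, and a finite poset has connected comparability graph if and only if its Hasse diagram is connected, since any comparability $a<_P b$ refines into a chain of covers $a=c_0\lessdot\cdots\lessdot c_k=b$. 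Hence $D_\pi\in L(\AAA(\bar{G}(P)))$ iff each block is connected in the Hasse-diagram sense used by the theorem.

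The main obstacle, modest as it is, is aligning ``connected''---a property intrinsic to the induced subposets---with ``$D_\pi$ is a flat of $\AAA(\bar{G}(P))$'', which is a priori a statement about a specific graph. The reason the definition of face partition uses $\bar{G}(P)$ rather than another representative of $P$ such as $\hat G^{\Hasse}(P)$ is precisely so that the flats of the arrangement record all comparabilities, making the two notions match. Once that alignment is made, combining the two equivalences yields the theorem.
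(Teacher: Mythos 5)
Your decomposition hinges on reading ``$\pi$ is a face partition'' as synonymous with ``$D_\pi$ is a flat of $\AAA(\bar G(P))$''. The paper's displayed definition does suggest that reading, but the paper's own worked example and summary paragraph make clear that the operative notion is weaker: $\pi$ is a face partition when $\bar F_\pi=\overline{c(P)}\cap D_\pi$ is actually a face of $\overline{c(P)}$, equivalently when $D_\pi$ does not meet the open chamber $c(P)$. Concretely, with the poset in Figure~\ref{fig:face-partitions}, the partition $\pi'=14/23$ is declared a face partition (because $\overline{c(P)}\cap D_{\pi'}$ is the line $D_V$, which is a face), even though $D_{\pi'}$ is \emph{not} a flat of $\AAA(\bar G(P))$: the block $\{2,3\}$ is not connected in the comparability graph $K_4\setminus\{2,3\}$, so by the very fact you invoke, $D_{\pi'}\notin L(\AAA(\bar G(P)))$. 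So ``face partition $\Leftrightarrow$ connected'' is false as stated, and your second equivalence does not hold.

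This leaves a genuine gap in the backward direction of the theorem. Your argument gives ``connected and compatible $\Rightarrow$ $D_\pi$ is a flat $\Rightarrow$ face partition'' and ``compatible $\Leftrightarrow$ closed'', which correctly proves the \emph{if} direction. But for the \emph{only if} direction you need: if $\pi$ is closed (compatible) and $\bar F_\pi$ is a face, then $\pi$ is connected. Knowing merely that $D_\pi$ fails to be a flat when some block is disconnected does not preclude $\bar F_\pi$ from being a face — the example $\pi'$ shows exactly this can happen when $\pi$ is not closed, so you must use compatibility somewhere. One standard way to close the gap: assume $\pi$ is compatible and some block $B_k$ splits into $P$-incomparable pieces $B_k^1\sqcup B_k^2$; pick a generic $z\in\bar F_\pi$ with distinct block-values respecting $P_\pi$, and perturb by $\pm\epsilon$ on $B_k^1$ only. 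Compatibility ensures both perturbed points stay in $\overline{c(P)}$, yet neither lies in $D_\pi$, while their midpoint $z$ does — so $\bar F_\pi$ is not extreme, hence not a face. Alternatively, one can argue that when $\pi$ is closed, $\bar F_\pi$ has full dimension $|\pi|$ inside $D_\pi$, so if $\bar F_\pi$ is a face of $\overline{c(P)}$ its affine hull is a flat of $\AAA(\bar G(P))$ equal to $D_\pi$, reducing to your flat criterion; but this requires knowing that affine hulls of faces of $\overline{c(P)}$ are flats, which again uses compatibility nontrivially. Either way, the extra argument is precisely where the content of Stanley's Theorem~1.2 lives, and it is missing from your proposal.
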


To summarize Theorem~\ref{thm:face-partitions}, characterizing the faces of $\O(P)$ amounts to characterizing which face partitions $\pi$ are closed. If $\pi$ is not compatible with $P$, then it is not closed. On the other hand, if $\pi$ not connected, then it is either not closed, or the flat $D_\pi$ cuts through the interior of $\O(P)$ (and hence of $c(P)$), in which case $\pi$ is not a face partition.

\begin{ex}
  Let $P$ be the poset shown at left in
  Figure~\ref{fig:face-partitions}; its lattice of closed face partitions is
  shown at right. In this and in later examples, we denote the blocks
  of a partition using dividers rather than set braces, e.g.,
  $\pi=B_1/B_2/\cdots/B_r$.

  The partition $\sigma=1/23/4$ is closed but not connected; it is not
  a face partition because $D_\sigma=H_{23}$ intersects the interior of
  $\overline{c(P)}$.  The partition $\pi=124/3$ is connected but not
  closed. Finally, the partition $\pi'=14/23$ is neither connected nor
  closed. However, both $\pi$ and $\pi'$ are face partitions because
  the subspaces $D_\pi$ and $D_{\pi'}$ intersect $\overline{c(P)}$ in
  the line $x_1=x_2=x_3=x_4$, which is the flat $D_V$. Therefore, both
  of these partitions have the same closure: $\cl_P(\pi)=\cl_P(\pi')=1234$.
  
  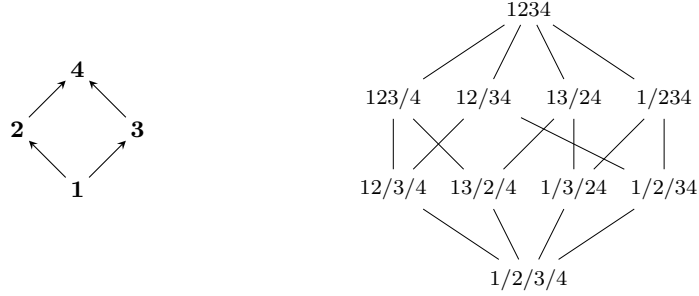
\begin{figure}\centering
    \tikzstyle{to} = [draw,-stealth]
    \begin{tikzpicture}
      \begin{scope}[shift={(-6,0)}, scale=.4, shorten >= -2.5pt, shorten <= -2.5pt]]
        \node (4) at (0,4) {\small $\mathbf{4}$};
        \node (2) at (-2,2) {\small $\mathbf{2}$};
        \node (3) at (2,2) {\small $\mathbf{3}$};
        \node (1) at (0,0) {\small $\mathbf{1}$};
        \draw[to] (1) to (2);
        \draw[to] (1) to (3);
        \draw[to] (2) to (4);
        \draw[to] (3) to (4);
      \end{scope}
      \begin{scope}[scale=.6]
      \node (1234) at (0,4) {\footnotesize $1234$};
      \node (123-4) at (-3,2) {\footnotesize $123/4$};
      \node (12-34) at (-1,2) {\footnotesize $12/34$};
      \node (13-24) at (1,2) {\footnotesize $13/24$};
      \node (1-234) at (3,2) {\footnotesize $1/234$};
      \node (12-3-4) at (-3,0) {\footnotesize $12/3/4$};
      \node (13-2-4) at (-1,0) {\footnotesize $13/2/4$};
      \node (1-3-24) at (1,0) {\footnotesize $1/3/24$};
      \node (1-2-34) at (3,0) {\footnotesize $1/2/34$};
      \node (1-2-3-4) at (0,-2) {\footnotesize $1/2/3/4$};
      \draw[very thin] (1234) -- (123-4) -- (12-3-4) -- (1-2-3-4)
      -- (1-2-34) -- (1-234) -- (1234);
      \draw[very thin] (1234) -- (12-34) -- (12-3-4);
      \draw[very thin] (1-234) -- (1-3-24) -- (13-24) -- (13-2-4) -- (123-4); 
      \draw[very thin] (1-2-34) -- (12-34);
      \draw[very thin] (1234) -- (13-24);
      \draw[very thin] (1-2-3-4) -- (13-2-4);
      \draw[very thin] (1-2-3-4) -- (1-3-24);
      \end{scope}
  \end{tikzpicture}
    \caption{A poset $P$ and its lattice of closed face partitions.}
    \label{fig:face-partitions}
  \end{figure}
\end{ex}

If $\bar\pi=\pi=\{B_1,\dots,B_r\}$ is a closed face partition of $P$,
then $D_\pi$ is an $r$-dimensional flat of $\AAA(G)$, and the closed face $\bar{F}_\pi$ is an $r$-dimensional subset of $D_\pi\subseteq\R^V$. The interior of $\bar{F}_\pi$ with
respect to the subspace topology of $D_\pi$ will be called an open
face. So as to avoid confusion between open and closed faces, and open
and closed chambers, we will speak of faces as being features of the
actual poset, not of the chambers. It should be easy to relate these
definitions back to the chambers if one so desires.

\begin{defn}
  A set $F\subseteq\R^V$ is a \emph{closed face} of the
  poset $P$ if $F=\bar{F}_\pi=\overline{c(P)}\cap D_\pi$ for some closed
  face partition $\pi=\cl_P(\pi)$ of $V$. The interior of $\bar{F}_\pi$
  with respect to the subspace topology of $D_\pi$ is called an
  \emph{open face} of $P$, and denoted $F_\pi$. Let $\Faces(P)$ and
  $\overline{\Faces}(P)$ denote the set of open and closed faces of
  $P$, respectively. Finally, define the faces of the graphic
  arrangement $\AAA(G)$ to be the faces of the posets over $G$:
  \[
 \Faces\AAA(G)=\bigcup_{\omega\in\Acyc(G)}\Faces(P(G,\omega))\,,\qquad\qquad
  \overline{\Faces}\,\AAA(G)
  =\bigcup_{\omega\in\Acyc(G)}\overline{\Faces}(P(G,\omega))\,.
  \]
  Faces of co-dimension $1$ are called \emph{facets}.
\end{defn}

\begin{rem}\label{rem:faces}
  The dimension of the face $F_\pi(P(G,\omega))$ is the number of strongly connected components of $\omega/\!\!\sim_\pi$.
  As long as $G$ is connected, there is a unique $1$-dimensional face of $\AAA(G)$, which is the
  line $x_1=\cdots =x_n$ and is contained in the closure of
  every chamber. There are no $0$-dimensional faces of
  $\AAA(G)$. The $n$-dimensional faces of $\AAA(G)$ are its chambers. Additionally, $\R^V$ is a disjoint union of open faces of
  $\AAA(G)$:
  \[
  \R^V=\dot{\bigcup_{F\in\Faces\AAA(G)}} F\,.
  \]
\end{rem}

If $P$ is a fixed poset over $G$, then there is a canonical
isomorphism between the lattice of closed face partitions and the
lattice of faces of $P$, given by the mapping $\pi\mapsto
F_\pi$. Recall that since $\pi$ is closed, $P_\pi=(\pi,\leq_P)$ is an acyclic preposet (i.e., poset) of size $|\pi|=r$. This induces an additional preposet over $V$ (i.e., of size $|V|=n$), which is $\leq_P$ with the additional relations that $x\sim_\pi y$ for
all $x,y\in B_i$. We will say that this is a \emph{preposet over
  $G$}, because it can be described by an (not necessarily acyclic)
orientation $\omega_\pi$ of $G$. The notation reflects the fact that this orientation can be constructed by starting with some $\omega\in\Acyc(G)$ and then making each edge bidirected if both endpoints are contained in the same block of $\pi$. Specifically, $\omega_\pi$ orients edge $\{i,j\}$ as $i\to j$ if $i\leq_P j$ and as $i\leftrightarrow j$ if additionally $i\sim_\pi
j$. Let $\Pre(G)$ be the set of all such orientations of $G$ that
arise in this manner. That is, 
\[
\Pre(G)=\{\omega_\pi\mid\omega\in\Acyc(G),\text{ $\pi$ closed face partition of $P(G,\omega)$}\}\,.
\]
When working with preposets over $G$, sometimes it is more convenient to quotient out by the strongly connected components and get an acyclic graph $\omega_\pi/\!\!\sim_\pi$. Note that this quotient is the same as $\omega'/\!\!\sim_\pi$ for at least one $\omega'\in\Acyc(G)$. In particular, $\omega'=\omega$ will always do. In summary, a preposet over $G$ can be expressed several ways:
\begin{enumerate}[(i)]
\item as a unique orientation $\omega_\pi$ of $G$, where $\pi$ is the partition into the strongly connected components;
\item as a unique acyclic quotient $\omega/\!\!\sim_\pi$ of an acyclic orientation $\omega\in\Acyc(G)$.
\end{enumerate}
Note that while the orientation $\omega_\pi$ and acyclic quotient $\omega/\!\!\sim_\pi$ are both unique to the preposet, the choice of representative $\omega$ is not. Regardless of how an element in $\Pre(G)$ is written, it induces a canonical partial order $(\pi,\leq)$ on the blocks of $\pi$. However, information is lost by writing it this way; in particular, the original graph $G$ cannot necessarily be determined from just $(\pi,\leq)$.

The mapping $\R^V\setmin\AAA(G)\stackrel{\alpha_G}{\longto}\Acyc(G)$
in Eq.~\eqref{eq:alpha-diagram1} can be extended to all of $\R^V$ by
adding both edges $i\to j$ and $j\to i$ if $x_i=x_j$. This induces a
bijection between the set $\Pre(G)$ of all preposets on $G$ and the
set of faces of the graphic hyperplane arrangement:
\begin{equation}
  \label{eq:alpha-diagram3}
  \xymatrix{\R^V \ar@{>>}[dr] \ar[rr]^{\alpha_G} &&
    \Pre(G) \\ & \Faces\AAA(G)\ar@{.>}[ur]_{c^{-1}} &\\ }
\end{equation}
Consequently, for any preposet $\omega_\pi$ over $G$, we can let $c(\omega_\pi)$ denote the open face of $\AAA(G)$ containing any (equivalently, all) $x\in\R^V$ such that $\alpha_G(x)=\omega_\pi$.

Moreover, if we restrict to the preposets on exactly $r$ strongly connected components, then the $\alpha_G$-fibers are the $r$-dimensional open faces of
$\AAA(G)$. If $x$ lies on a face $F_\pi(P)=F_{\bar\pi}(P)$ for some poset $P$ and closed face partition $\bar\pi=\cl_P(\pi)$, then the
preposet $P_\pi=(\pi,\leq_P)$ has vertex set $\pi=\{B_1,\dots B_r\}$; these are the strongly connected components of the orientation $\alpha_G(x)$.

\section{Morphisms of ordinary posets}\label{sec:poset-morphisms}

Poset isomorphisms are easy to describe both combinatorially and
geometrically. An \emph{isomorphism} between two finite posets
$P$ and $P'$ on vertex sets $V$ and $V'$ is a bijection
$\phi\colon V\to V'$ characterized
\begin{enumerate}
\item[$\bullet$] {\it combinatorially} by the condition that $i<_Pj$
  is equivalent to $\phi(i)<_{P'}\phi(j)$ for all $i,j\in V$;
\item[$\bullet$] {\it geometrically} by the equivalent condition that
  the induced isomorphism $\Phi$ on $\R^V\to\R^{V'}$ maps $c(P)$ to
  $c(P')$ bijectively.
\end{enumerate}
By ``induced isomorphism,'' we mean that $\Phi$ permutes the
coordinates of $\R^V$ in the same way that $\phi$ permutes the
vertices of $V$:
\begin{equation}\label{eq:induced}
  (x_1,x_2,\dots,x_n)\stackrel{\Phi}{\longmapsto}
  (x_{\phi^{-1}(1)},x_{\phi^{-1}(2)},\dots,x_{\phi^{-1}(n)})\,.
\end{equation}
Morphisms of ordinary posets are also well understood. The ``combinatorial'' definition is easiest to modify. If $P$ and $P'$ are as above, then a morphism, or \emph{order-preserving
  map}, is a function $\phi\colon V\to V'$ such that $i<_P j$ implies
$\phi(i)<_{P'}\phi(j)$ for all $i,j\in V$. The geometric
characterization is trickier because quotients, injections, and
extensions are inherently different. These three types of
order-preserving maps generate all poset morphisms, up to
isomorphism. Below we will review this and give a geometric
interpretation of each, which will motivate their toric analogues.

\subsection{Quotient}

\subsubsection{Contracting partitions}

Roughly speaking, a quotient morphism of a poset $P(G,\omega)$ is described combinatorially by contracting $\omega$ by the blocks of a partition
$\pi=\{B_1,\dots,B_r\}$ while preserving acyclicity. Geometrically, the chamber $c(P)$ is orthogonally projected to $\bar{F}_{\bar\pi}:=\overline{c(P)}\cap D_{\bar\pi}$, where $\bar\pi=\cl_P(\pi)$. This is the mapping
\begin{equation}\label{eq:d_pi}
    d_\pi\colon c(P)\longto D_\pi\,,\qquad
    d_\pi(x)=(x+D_{\bar\pi}^\perp)\cap D_{\bar\pi}\,.
\end{equation}
By construction, the image of this map is $F_{\bar\pi}$, which is a face of $P$ if $\pi$ is a face partition. Though the map $d_\pi$ extends to the closure $\overline{c(P)}$, it does \emph{not} do so in a well-defined manner; the image $d_\pi(x)$ for some $x$ on a hyperplane depends on the choice of $P$, and each hyperplane intersects two (closed) chambers along facets, and intersects the boundary of every chamber.

\begin{ex}\label{ex:K_3}
  Let $G=K_3$, the complete graph on $3$ vertices. There are six
  acyclic orientations of $G$, and three of them are shown in
  Figure~\ref{fig:K_3}. The curved arrows point to the chamber
  $c(P_i)$ of $\AAA(G)$ for each $P_i:=P(G,\omega_i)$, $i=1,2,3$. The
  intersection of each closed chamber $\overline{c(P_i)}$ with
  $[0,1]^3$ is the order polytope, $\O(P_i)$.

  Contracting $\omega_1$ and $\omega_2$ by the partition
  $\pi=\{B_1\!=\!\{1\},B_2\!=\!\{2,3\}\}$ yields a poset over
  $\{B_1,B_2\}$; these are shown directly below the orientations in
  Figure~\ref{fig:K_3}. Therefore, $\pi$ is closed with respect to
  $P_1$ and $P_2$. Geometrically, the flat $D_\pi$ intersects the
  closed chambers $\overline{c(P_i)}$ for $i=1,2$ in two-dimensional
  faces.

  In contrast, contracting $P_3$ by $\pi$ yields a preposet that is
  not a poset. Therefore, $\pi$ is not closed with respect to
  $P_3$. Indeed, $\bar\pi:=\cl_P(\pi)=123:=\{\{1,2,3\}\}$, and the flat $D_{\bar\pi}$
  intersects the closed chamber $\overline{c(P_3)}$ in a
  line. Modding out the preposet $P_\pi:=(\pi,\leq_{P_3})$ by its
  strongly connected components yields a one-element
  poset. Geometrically, the chamber $c(P_3)$ projects onto the
  one-dimensional face $\bar{F}_{\bar\pi}$.

To see why the map $d_\pi$ from Eq.~\eqref{eq:d_pi} does not extend to the closure of the chambers in a well-defined manner, consider the point $y$ shown in Figure~\ref{fig:K_3} that lies on the hyperplane $x_1=x_2$, and the same partition, $\pi=1/23$. The orthogonal projection $d_\pi\colon c(P_3)\to D_\pi$ as defined in Eq.~\eqref{eq:d_pi} and extended continuously to the closed chamber maps $\overline{c(P_3)}$ onto the line $x_1=x_2=x_3$. However, if $\overline{c(P_4)}$ is the other closed chamber containing $y$ (that is, the one for which $x_3\leq x_2\leq x_1$), then $d_\pi\colon c(P_4)\to D_\pi$ extended to the closure maps $\overline{c(P_4)}$ onto a 2-dimensional closed face $\bar{F}_\pi(P_4)=\overline{c(P_4)}\cap D_\pi$. The point $y$ is projected orthogonally onto the plane $D_\pi$, and does not end up on the line $x_1=x_2=x_3$.
\end{ex}

\begin{figure}\centering
  \tikzstyle{axis} = [draw, dashed,-stealth]
  \tikzstyle{to} = [draw,-stealth]
  \tikzstyle{v} = [circle, draw, fill=white,inner sep=0pt, 
    minimum size=1mm]  
  \begin{tikzpicture}[scale=0.6]
    \filldraw[thick,dashed,fill=lightgray,opacity=0.6] 
    (3.52,7.8)--(3.52,1.8)--(6,6)--cycle;
    \draw[dashed,opacity=0.7] {(4.76,3.9)--(3.52,7.8)};
    \filldraw[thick,dashed,fill=lightgray,opacity=0.6] 
    (6,6)--(9.52,7.8)--(3.52,1.8)--cycle;
    \draw[dashed,opacity=0.5] {(4.76,3.9)--(9.52,7.8)};
    \filldraw[thick,dashed,fill=lightgray,opacity=0.6] 
    (6,6)--(9.52,1.8)--(3.52,1.8)--cycle;
    \draw[dashed,opacity=0.5] {(4.76,3.9)--(9.52,1.8)};
    \filldraw[thick,dashed,fill=lightgray,opacity=0.6] 
    (0,6)--(6,6)--(3.52,1.8)--cycle;
    \draw[dashed,opacity=0.5] {(4.76,3.9)--(0,6)};
    \filldraw[thick,dashed,fill=lightgray,opacity=0.6] 
    (6,0)--(6,6)--(3.52,1.8)--cycle;
    \draw[dashed,opacity=0.5] {(4.76,3.9)--(6,0)};
    \filldraw[thick,dashed,fill=lightgray,opacity=0.6]
    (0,0)--(6,6)--(3.52,1.8)--cycle;
    \draw[dashed,opacity=0.5] {(4.76,3.9)--(0,0)};
    \draw {(0,0)--(0,6)--(6,6)--(6,0)--(0,0)};
    \draw {(3.52,7.8)--(9.52,7.8)--(9.52,1.8)};
    \draw {(0,6)--(3.52,7.8)}; 
    \draw {(6,0)--(9.52,1.8)}; 
    \draw {(6,6)--(9.52,7.8)};
    \draw (7.3,1.5) node{\scriptsize $x_3\!<\!x_1\!<\!x_2$};
    \draw (8.25,5) node{\scriptsize $x_1\!<\!x_3\!<\!x_2$};
    \draw (3.25,.5) node{\scriptsize $x_3\!<\!x_2\!<\!x_1$};
    \draw (6.25,7.25) node{\scriptsize $x_1\!<\!x_2\!<\!x_3$};
    \draw (1.3,2.75) node{\scriptsize $x_2\!<\!x_3\!<\!x_1$};
    \draw (2.2,6.3) node{\scriptsize $x_2\!<\!x_1\!<\!x_3$};
    \draw [axis] (9.52,1.8) to (10.27,1.8); \node at(10.2,2.1){\scriptsize $x_2$};
    \draw [axis] (3.52,7.8) to (3.52,8.5); \node at(3.1,8.2){\scriptsize $x_3$};
    \draw [axis] (0,0) to (-.7,-.35); \node at(-.5,.1){\scriptsize $x_1$};
    \node at (0,8) {$\R^3$};
    \node at (5.5,3) [label={[label distance=-.15cm]below:{\scriptsize $y$}}]{\scriptsize $\bullet$};
    \begin{scope}[shift={(11.5,7)}]
      \node (1a) at (0,0) {\scriptsize 1};
      \node (2a) at (1.73,1) {\scriptsize 2}; 
      \node (3a) at (1.73,-1) {\scriptsize 3}; 
      \draw[to] (1a) to (2a);
      \draw[to] (2a) to (3a);
      \draw[to] (1a) to (3a);
      \draw[dashed] (1.73,0) ellipse (.25 and 1.5);
      \draw[dashed] (0,0) circle (.25);
      \node (1A) at (0,-2) {\scriptsize $B_1$};
      \node (2A) at (1.73,-2) {\scriptsize $B_2$};
      \draw[to, shorten >= -3pt, shorten <= -3pt] (1A) to (2A);
      \path[to,out=160,in=45] (.25,1) to (-4.5,1);
      \node at (-5.5,1.7) {\scriptsize $\O(P(G,\omega_1))$};
      \node at (1,0) {\scriptsize $\omega_1$};
    \end{scope}
    \begin{scope}[shift={(11.5,2)}]
      \node (1c) at (0,0) {\scriptsize 1};
      \node (2c) at (1.73,1) {\scriptsize 2}; 
      \node (3c) at (1.73,-1) {\scriptsize 3}; 
      \draw[to] (3c) to (2c);
      \draw[to] (3c) to (1c);
      \draw[to] (1c) to (2c);
      \draw[dashed] (1.73,0) ellipse (.25 and 1.5);
      \draw[dashed] (0,0) circle (.25);
      \node (1C) at (0,-2) {\scriptsize $B_1$};
      \node (2C) at (1.73,-2) {\scriptsize $B_2$};
      \draw[to, shorten >= -3pt, shorten <= -3pt] (1C) to (2C);
      \draw[to, shorten >= -3pt, shorten <= -3pt] (2C) to (1C);
      \path[to,out=200,in=-30] (.25,-1) to (-3.5,-1.3);
      \node at (-2,-2.2) {\scriptsize $\O(P(G,\omega_3))$};
      \node at (1,0) {\scriptsize $\omega_3$};
    \end{scope}
    \begin{scope}[shift={(-3,2.75)}]
      \node (1b) at (0,0) {\scriptsize 1};
      \node (2b) at (1.73,1) {\scriptsize 2}; 
      \node (3b) at (1.73,-1) {\scriptsize 3}; 
      \draw[to] (2b) to (3b);
      \draw[to] (3b) to (1b);
      \draw[to] (2b) to (1b);
      \draw[dashed] (1.73,0) ellipse (.25 and 1.5);
      \draw[dashed] (0,0) circle (.25);
      \node (1B) at (0,-2) {\scriptsize $B_1$};
      \node (2B) at (1.73,-2) {\scriptsize $B_2$};
      \draw[to, shorten >= -3pt, shorten <= -3pt] (2B) to (1B);
      \path[to,out=80,in=140] (.75,1.5) to (2.7,2);
      \node at (.5,2.75) {\scriptsize $\O(P(G,\omega_2))$};
      \node at (1,0) {\scriptsize $\omega_2$};
    \end{scope}
  \end{tikzpicture}
  \caption{The hyperplane arrangement $\AAA(G)$ for $G=K_3$. Three
    orientations in $\Acyc(G)$ are shown, along with the corresponding
    chambers of $\AAA(G)$, and the preposet that results when
    contracting $P_i=P(G,\omega_i)$ by the partition
    $\pi=\{B_1\!=\!\{1\},B_2\!=\!\{2,3\}\}$ of $V$. The intersection
    of each (closed) chamber $\overline{c(P_i)}$ with $[0,1]^3$ is the
    order polytope $\O(P_i)$ of $P_i$. The point $y$ is supposed to lie on the hyperplane $x_1=x_2$.}
    \label{fig:K_3}
\end{figure}
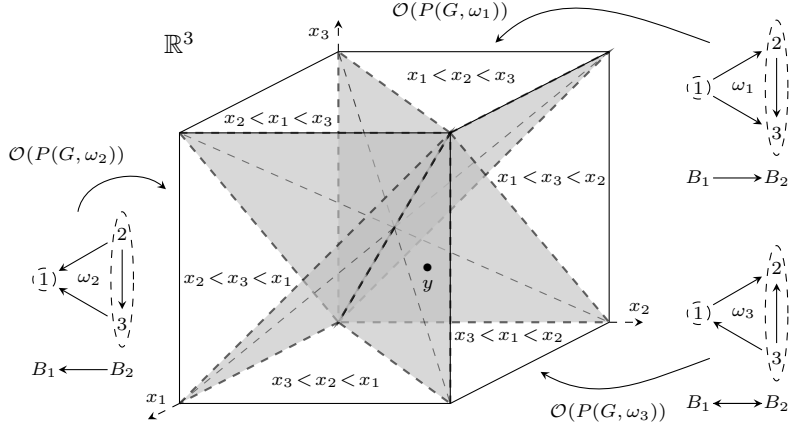

Despite this, there is a natural way to extend $d_\pi$ to all of $\R^V$, though not continuously. To do this, we first have to extend the notion of the closure of a partition $\pi$ with respect to a poset, to a preposet $P$ over $G$. This is easy, since the original definition did not specifically require $P$ to actually be a poset. Specifically, the closure of $\pi$ with respect to a preposet $P$ is the unique minimal coarsening $\bar\pi:=\cl_P(\pi)$ of $\pi$ such that $(\bar\pi,\leq_P)$ is acyclic. The map $d_\pi$ can now be extended to all of $\R^V$, as
\begin{equation}\label{eq:d_pi-extended}
d_\pi\colon\R^V\longto D_\pi\,,\qquad d_\pi(x)=(x+D_{\bar\pi}^\perp)\cap D_{\bar\pi}\,,\quad\text{where }\bar\pi=\overline{\pi(x)}:=\cl_{\alpha_G(x)}(\pi)\,,
\end{equation}
where $\alpha_G$ is the map from Eq.~\eqref{eq:alpha-diagram3} sending a point to the unique open face (i.e., preposet over $G$) containing it.

Let us return to the case where $P$ is a poset over $G$, and examine the case when $\pi$ is not a face partition of $P$. Indeed, for an arbitrary partition $\pi$ of $V$ with $\bar\pi=\cl_P(\pi)$, the subset $\bar{F}_\pi:=\overline{c(P)}\cap D_{\bar\pi}$ need not be a face of $P$; it could cut through the interior of the chamber. In this case, it is the face of at least one extension of $P$.  Specifically, let $G'_\pi$ be the graph formed by making each
block $B_i$ a clique, and let $G/\!\!\sim_\pi$ be the graph formed by
contracting these cliques into vertices, with loops and multiedges
removed. Clearly, $D_\pi$ is a flat of the graphic arrangement
$\AAA(G'_\pi)$ (this choice is not unique, but it is a canonical one that works). Thus, the set $\bar{F}_{\bar\pi}=\bar{F}_\pi=\overline{c(P)}\cap D_\pi$, for $\bar\pi=\cl_P(\pi)$, is a closed face of $\AAA(G'_\pi)$, and hence a face of some poset $P'$ over $G'_\pi$ for which $\bar\pi=\cl_{P'}(\pi)$. 

Whether or not $\pi$ is a face partition of a particular poset $P$ over $G$, the map $d_\pi$ in Eq.~\eqref{eq:d_pi-extended} projects a chamber $c(P)$ onto a flat $D_{\bar\pi}$ of $\AAA(G'_\pi)$, where $\bar\pi=\cl_P(\pi)$. From here, we need to project it homeomorphically onto a coordinate subspace of $\R^V$ so it is a chamber of a lower-dimensional arrangement. Specifically, for a  partition $\pi=\{B_1,\dots,B_r\}$, let $W\subseteq V$ be any subset
formed by removing all but $1$ coordinate from each $B_i$, and let
$\r_\pi\colon\R^V\longto\R^W$ be the induced projection. The
$r_\pi$-image of $\AAA(G)$ is the graphic arrangement of
$G/\!\!\sim_\pi$. The following ensures that $r_\pi$ is a
homeomorphism, and that the choice of $W$ does not matter. We omit the
elementary proof.

\begin{lem}\label{lem:homeomorphism}
  Let $\pi=\{B_1,\dots,B_r\}$ be a partition of $V$, and
  $W=\{b_1,\dots,b_r\}\subseteq V$ with $b_i\in B_i$. The restriction
  $\r_\pi|_{D_\pi}\colon D_\pi\longto\R^W$ is a homeomorphism.
  
  Moreover, all such projection maps for a fixed $\pi$ are
  topologically conjugate in the following sense: If
  $W'=\{b'_1,\dots,b'_r\}\subseteq V$ with $b'_i\in B_i$ and
  projection map $\r'_\pi|_{D_\pi}\colon D_\pi\longto\R^{W'}$, and
  $\sigma$ is the permutation of $V$ that transposes each $b_i$ with
  $b'_i$, then the following diagram commutes:
  \[
  \xymatrix{D_\pi\ar@{^{(}->}[r] & \R^V\ar[r]^{r_\pi}\ar[d]_\Sigma &
    \R^W\ar[d]^{\Sigma'} \\ D_\pi\ar@{^{(}->}[r] & \R^V\ar[r]^{r'_{\pi}} & \R^{W'}}
  \]
  Here, $\sigma'\colon W\to W'$ is the map $b_i\mapsto b'_i$, with
  $\Sigma$ and $\Sigma'$ being the induced linear maps as defined
  in Eq.~\eqref{eq:induced}.
\end{lem}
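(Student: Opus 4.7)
The plan is to exploit the explicit linear structure of $D_\pi$. Since
\[
D_\pi = \{x \in \R^V : x_i = x_j \text{ whenever } i,j \text{ lie in a common block of } \pi\}
\]
is precisely the $r$-dimensional subspace spanned by the block-indicator vectors $e_{B_i} := \sum_{j \in B_i} e_j$, every $x \in D_\pi$ admits a unique expansion $x = \sum_{i=1}^r t_i e_{B_i}$ with $t_i \in \R$, namely $t_i$ equal to the common value $x_j$ for $j \in B_i$.

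For the first assertion, I would observe that for any choice $W = \{b_1, \dots, b_r\}$ with $b_i \in B_i$, the restriction $r_\pi|_{D_\pi}$ sends $x = \sum t_i e_{B_i}$ to $(x_{b_1}, \dots, x_{b_r}) = (t_1, \dots, t_r)$, independent of which representative $b_i \in B_i$ was chosen. This is the coordinate isomorphism associated to the basis $\{e_{B_i}\}$, so it is a linear bijection with linear inverse $(t_1, \dots, t_r) \mapsto \sum t_i e_{B_i}$. Both maps are linear between finite-dimensional real vector spaces and hence continuous, yielding the claimed homeomorphism.

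For the conjugacy square, the crucial observation is that $\Sigma|_{D_\pi}$ is the identity map. Indeed, $\sigma$ is a product of transpositions $(b_i, b'_i)$ with both $b_i, b'_i \in B_i$, and elements of $D_\pi$ are fixed by any coordinate swap that stays inside a block; so $\Sigma(x) = x$ for all $x \in D_\pi$. Given $x \in D_\pi$ with $t_i := x_{b_i} = x_{b'_i}$, going right-then-down produces $\Sigma'(r_\pi(x))$, which by the convention of equation~\eqref{eq:induced} has entry $t_i$ at position $b'_i = \sigma'(b_i)$ in $\R^{W'}$; going down-then-right produces $r'_\pi(\Sigma(x)) = r'_\pi(x)$, whose entry at position $b'_i$ equals $x_{b'_i} = t_i$. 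These agree coordinate-by-coordinate, so the square commutes.

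The only mild subtlety is tracking the permutation convention in equation~\eqref{eq:induced}, where $\Phi$ places $x_{\phi^{-1}(k)}$ at position $k$. Because $\Sigma|_{D_\pi}$ acts trivially, this convention never actually bites on the left edge of the diagram, and on the right edge one only needs the identity $x_{b'_i} = x_{b_i}$ valid for points in $D_\pi$. I anticipate no real obstacle; this is the sort of bookkeeping the author flagged as elementary when omitting the proof.
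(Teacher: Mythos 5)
Your proof is correct and is precisely the elementary argument the paper declines to write out. The two key observations — that $D_\pi$ is the span of the block-indicator vectors $e_{B_i}$, so that $r_\pi|_{D_\pi}$ is the coordinate isomorphism in that basis, and that $\Sigma|_{D_\pi}$ is the identity because $\sigma$ only permutes indices within blocks — immediately give both the homeomorphism claim and the commuting square. Your bookkeeping with the convention of Eq.~\eqref{eq:induced} (namely $\Sigma'(y)_{b'_i}=y_{(\sigma')^{-1}(b'_i)}=y_{b_i}$) is handled correctly; there is no gap.
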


By convexity, $d_\pi$ induces a well-defined map
$\delta_\pi\colon\Faces\AAA(G)\longto\Faces\AAA(G'_\pi)$ making the
following diagram commute:
\begin{equation}\label{eq:delta_pi}
  \xymatrix{\R^V\ar[r]^{d_\pi}\ar[d] & D_\pi\ar[d]
    \\ \Faces\AAA(G)\ar@{.>}[r]^{\delta_\pi} & \Faces\AAA(G'_\pi)}
\end{equation}

The map $\delta_\pi$ is best understood by looking at a related map $\bar\delta_\pi$ on closed faces. Let $\bar{F}=\overline{c(P)}\cap D_\sigma$ be a closed face of $\AAA(G)$, for some closed face partition $\sigma\in\Pi_V$. Then the map $\bar\delta_\pi$ is defined by
\[
\bar\delta_\pi\colon\overline\Faces\,\AAA(G)\longto\overline\Faces\,\AAA(G'_\pi)
\,,\qquad\bar\delta_\pi\colon\overline{c(P)}\cap D_\sigma\longmapsto 
\overline{c(P)}\cap D_\sigma\cap D_\pi\,.
\]
The map $\delta_\pi$ is between the corresponding open faces. These faces are then mapped to faces of the arrangement $\AAA(G/\!\!\sim_\pi)$  under the projection $\r_\pi|_{D_\pi}\colon
D_\pi\longto\R^W$. [Alternatively, we could simply identity the quotient space $\R^V/D_\pi^\perp$ with $\R^W$.] 

To summarize, the open faces of $\AAA(G)$ arise from preposets $\omega=\omega_\sigma$ in $\Pre(G)$, where without loss of generality, the blocks for $\sigma\in\Pi_V$ are the strongly connected components. The contraction of this preposet formed by adding all relations (edges) of the form $(v,w)$ for $v,w\in B_i$ yields a preposet $\omega'_\pi$ over $G'_\pi$. Then, modding out by the strongly connected components yields an acyclic preposet, i.e., a poset. This two-step process is a composition of maps
\[
\Pre(G)\stackrel{q_\pi}{\longto}\Pre(G'_\pi)\stackrel{p_\pi}{\longto}\Pre(G/\!\!\sim_\pi)\,,\qquad\qquad
\omega_\sigma=\omega\stackrel{q_\pi}{\longmapsto}\omega'_\pi
\stackrel{p_\pi}{\longmapsto}\omega'_\pi/\!\!\sim_{\bar\pi}=\omega/\!\!\sim_{\bar\pi}.
\]
Here $\bar\pi=\cl_{P_\sigma}(\pi)$, the closure of $\pi$ with respect to the \emph{preposet} $P_\sigma$, which we have been denoting by $\omega_\sigma$ under a slight abuse of notation. 

Putting this all together gives a commutative diagram that illustrates the relationship between the points in $\R^V$, the open faces of the graphic arrangement $\AAA(G)$, and the preposets over $G$. The left column depicts the acyclic preposets -- those that are also posets.
\begin{equation*}\label{eq:6squares}
  \xymatrix{\R^V\setmin\AAA(G)\,\ar@{^{(}->}[r]\ar[d] &
    \R^V\ar[r]^{d_\pi}\ar[d] & D_\pi\ar[r]^{\r_\pi}\ar[d] & \R^W\ar[d]
    \\ \Chambers\AAA(G)\,\ar@{^{(}->}[r]\ar@{<->}[d] &
    \Faces\AAA(G)\ar[r]^{\delta_\pi}\ar@{<->}[d] &
    \Faces\AAA(G'_\pi)\ar[r]^{\rho_\pi}\ar@{<->}[d] &
    \Faces\AAA(G/\!\!\sim_\pi)\ar@{<->}[d] \\ \Acyc(G)\,\ar@{^{(}->}[r] &
    \Pre(G)\ar[r]^{\q_\pi} & \Pre(G'_\pi)\ar[r]^{\p_\pi} &
    \Pre(G/\!\!\sim_\pi) }
\end{equation*}

\subsubsection{Intervals and antichains}

Poset morphisms that are quotients are characterized geometrically by projecting the chamber $c(P)$ onto a flat $D_{\bar\pi}$ of $\AAA(G'_\pi)$ for some partition $\bar\pi=\cl_P(\pi)$, and then homeomorphically mapping this down to a chamber of a lower-dimensional graphic arrangement $\AAA(G/\!\!\sim_\pi)$. Equivalently,
contracting $P(G,\omega)$ by $\bar\pi$ yields an acyclic preposet
$P_{\bar\pi}=(\bar\pi,\leq_P)$. It is well known that contracting a poset by an interval or an antichain yields an acyclic preposet. Verification of this is elementary, but first recall how these are defined.

\begin{defn}
  Let $P$ be a poset over $V$. An \emph{interval} of $P$ is a subset
  $I\subseteq V$, sometimes denoted $[i,j]$, such that 
  \[
  I=\{x\in P:i\leq_Px\leq_P j\},\;\text{ for some fixed } i,j\in P\,.
  \]
  An \emph{antichain} of $P$ is a subset $A\subseteq V$ such that any
  two elements are incomparable.
\end{defn}

We will take a moment to understand how contracting an interval or antichain fits in the partition framework described above, which will help us understand the toric analogue. Given a nonempty subset $S\subseteq V$, define the partition $\pi_S$
of $V$ by
\begin{equation}\label{eq:pi_S}
  \pi_S=\{B_1=S,B_2,\dots,B_r\},\qquad\text{where $|B_i|=1$ for $i=2,\dots,r$}.
\end{equation}
Contracting an interval $I\subseteq V$ in a poset $P$ yields the poset
$P_{\pi_I}=(\pi_I,\leq_P)$. In this case, $\pi_I=\cl_P(\pi_I)$ is a face
partition and $F_\pi$ is a $(|V|-|I|+1)$-dimensional face of
$P$. Similarly, collapsing an antichain $A\subseteq V$ yields the
poset $P_{\pi_A}=(\pi_A,\leq_P)$. Note that $D_{\pi_I}$ is a flat of $\AAA(G)$ and lies on the boundary of $c(P)$, but the $(|V|-|A|+1$)-dimensional subspace $D_{\pi_A}$ cuts through the interior of $c(P)$. For both of these cases, $S=I$ and $S=A$, the subspace $D_{\pi_S}$ is trivially a flat of $\AAA(G'_{\pi_S})$.

\subsection{Extension}\label{subsec:extensions}

Given two posets $P,P'$ on a set $V$, one says that {\it $P'$ is an
  extension of $P$} when $i<_Pj$ implies $i<_{P'}j$. In this case, the
identity map $P\longto P'$ is a poset morphism. Geometrically, $P'$ is
an extension of $P$ if and only if one has an inclusion of their open
polyhedral cones $c(P')\subseteq c(P)$. Each added relation $i<_{P'}j$
amounts to intersecting $c(P)$ with the half-space
$H_{i<j}:=\{x\in\R^V:x_i<x_j\}$.

\subsection{Inclusion} 

The last operation that yields a poset morphism is an injection
$\phi\colon P\into P'$. This induces a canonical inclusion
$\Phi\colon\R^V\into\R^{V'}$. Note that $i<_Pj$ implies $i<_{P'}j$,
but not necessarily vice-versa. Thus, up to isomorphism, an inclusion
can be decomposed into the composition $P\into P''\to P'$, where the
first map adds the elements $\{n+1,\dots,m\}$ to $P$ but no extra
relations, and then the map $P''\to P'$ is an extension. This gives an
inclusion of polyhedral cones:
\begin{align*}
  c(P'):=\{x\in\R^{V'}:x_i<x_j\text{ for }i<_{P'}j\}\subseteq
  c(P'')&=\{x\in\R^{V'}:x_i<x_j\text{ for } i<_Pj\} \\
  &\cong c(P)\times\R^{V'\setmin V}.
\end{align*}

\subsection{Summary}  

Up to isomorphism, every morphism of a poset $P=P(G,\omega)$ can be decomposed into a sequence of three steps:
\begin{enumerate}[(i)]
\item \emph{quotient}: Collapsing $G$ by a partition $\pi$ that preserves acyclicity of $\omega$ (projecting $c(P)$ to a flat $D_\pi$ of $\AAA(G'_\pi)$ for some closed partition $\pi=\cl_P(\pi)$).
\item \emph{inclusion}: Adding vertices (adding dimensions).
\item \emph{extension}: Adding relations (intersecting with
  half-spaces).
\end{enumerate}
In the special case of the morphism $P\longto P'$ being
surjective, the inclusion step is eliminated and the entire process
can be described geometrically by projecting $\overline{c(P)}$ to a
flat $D_\pi$ and then intersecting with a collection of half-spaces.

\section{Toric posets and preposets}\label{sec:toric-posets}

\subsection{Toric chambers and posets}

Toric posets, introduced in~\cite{Develin:15}, arise from ordinary
(finite) posets defined by acyclic orientations under the equivalence
relation generated by converting maximal elements into minimal
elements, or sources into sinks.  Whereas an ordinary poset
corresponds to a chamber of a graphic arrangement $\AAA(G)$, a toric
poset corresponds to a chamber of a {\it toric graphic arrangement}
$\AAA_{\tor}(G)=\quot(\AAA(G))$, which is the image of $\AAA(G)$ under
the quotient map $\R^V \overset{\quot}{\longto} \R^V/\Z^V$. Elements
of $\AAA_{\tor}(G)$ are \emph{toric hyperplanes}
\[
H^{\tor}_{ij}:=\{x\in\R^V/\Z^V:\modone{x_i}=\modone{x_j}\}=\quot(H_{ij})\,.
\]
\begin{defn}
  \label{toric-poset-defn}
  A connected component $c$ of the complement $\R^V/\Z^V \setmin
  \AAA_{\tor}(G)$ is called a {\it toric chamber} for $G$, or simply a chamber of $\AAA_{\tor}(G)$.  Let
  $\Chambers \AAA_{\tor}(G)$ denote the set of all chambers of
  $\AAA_{\tor}(G)$.
  
  A {\it toric poset} $P$ is a set $c$ that arises as a toric chamber
  for at least one graph $G$. We may write $P=P(c)$ or $c=c(P)$,
  depending upon the context.
\end{defn}
If we fix a graph $G=(V,E)$ and consider the arrangement
$\AAA_{\tor}(G)$, then each point in $\R^V/\Z^V$ naturally determines
a preposet on $G$ via a map $\bar\alpha_G\colon
x\mapsto\omega(x)$. Explicitly, for
$x=(\modone{x_1},\ldots,\modone{x_n})$ in $\R^V/\Z^V$, the directed
graph $\omega(x)$ is constructed by doing the following for each edge
$\{i,j\}$ in $E$:
\begin{itemize}
\item If $\modone{x_i}\leq\modone{x_j}$, then include edge $i\to j$;
\item If $\modone{x_j}\leq\modone{x_i}$, then include edge $j\to i$.
\end{itemize}
The mapping $\bar\alpha_G$ is essentially the same as $\alpha_G$ from
Eq.~\eqref{eq:alpha-diagram3} except done modulo $1$, so many of its
properties are predictably analogous. For example, the undirected
version of $\omega(x)$ is $G$. The edge $\{i,j\}$ is bidirected in
$\omega(x)$ if and only if $\modone{x_i}=\modone{x_j}$. Therefore,
$\omega(x)$ is acyclic if and only if $x$ lies in
$\R^V/\Z^V\setmin\AAA_{\tor}(G)$; in this case $\omega(x)$ describes a
poset. Otherwise it describes a preposet (that is not a
poset). Modding out by the strongly connected components yields an acyclic graph $\omega(x)/\!\!\sim_x$ that describes a poset.

\begin{defn}
  When two preposets $\omega(x)$ and $\omega(y)$ are such that the directed graphs $\omega(x)/\!\!\sim_x$ and $\omega(y)/\!\!\sim_y$ differ only by converting a source vertex (equivalence class) into a sink, or vice-versa, we say they differ by a \emph{flip}. The transitive closure of the flip operation generates an equivalence relation on $\Pre(G)$, denoted by $\equiv$.
\end{defn}

In the special case of restricting to preposets that are acyclic, we get $\Acyc(G)\subseteq\Pre(G)$ and a bijective correspondence between toric posets and chambers of toric graphic arrangements. This is Theorem 1.4 in~\cite{Develin:15}. A generalization of this to a bijection between toric preposets and faces of the toric graphic arrangement appears later in this section (Proposition~\ref{prop:toric-bijection}).

\begin{thm}{(\cite{Develin:15}, Theorem 1.4)}
\label{thm:bijection}
The map $\bar{\alpha}_G$ induces a bijection
between $\Chambers \AAA_{\tor}(G)$ and $\Acyc(G)/\!\!\equiv$ as follows:

\begin{equation}\label{eq:toric-chamber-bijection}
\xymatrix{\R^V/\Z^V \setmin \AAA_{\tor}(G) \ar@{>>}[d]
  \ar[r]^{\bar{\alpha}_G} & \Acyc(G) \ar@{>>}[d] \\ \Chambers
  \AAA_{\tor}(G) \ar@{.>}[r]_{\bar{\alpha}_G} & \Acyc(G)/\!\!\equiv}
\end{equation}
In other words, two points $x,x'$ in $\R^V/\Z^V \setmin \AAA_{\tor}(G)$
have $\bar{\alpha}_G(x)\equiv\bar{\alpha}_G(x')$ if and only if
$x,x'$ lie in the same toric chamber of $\AAA_{\tor}(G)$.
\end{thm}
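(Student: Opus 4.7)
The plan is to show that $\bar\alpha_G$ descends to a well-defined bijection $\Chambers \AAA_{\tor}(G) \to \Acyc(G)/\!\equiv$ via a local analysis of how $\bar\alpha_G$ varies along paths in a single toric chamber. The central observation is that, as a point $x$ moves continuously in $\R^V/\Z^V \setminus \AAA_{\tor}(G)$, the value $\bar\alpha_G(x)$ is locally constant except at isolated ``wrap events'' where some coordinate $x_i$ crosses an integer in $\R/\Z$, and each wrap is exactly a source-to-sink flip at vertex $i$. Indeed, for the path to avoid $\AAA_{\tor}(G)$ at the wrap moment, every neighbor $j$ of $i$ must have $\modone{x_j}$ bounded away from $0$; hence just before the wrap, $\modone{x_i}$ is near $1$ and exceeds every $\modone{x_j}$ (so $i$ is a sink), while just after it is near $0$ and is less than every $\modone{x_j}$ (so $i$ is a source).

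For well-definedness of the induced map, I would connect any two points $x, x'$ of the same chamber by a generic piecewise-smooth path with only finitely many, non-simultaneous wrap events; each event is a single source-to-sink flip by the observation above, so $\bar\alpha_G(x) \equiv \bar\alpha_G(x')$. For injectivity, given $\bar\alpha_G(x) \equiv \bar\alpha_G(x')$, I would realize the chain of flips one at a time, each by a single-coordinate wrap: if vertex $i$ is a sink of the current orientation, increase $\modone{x_i}$ past $1$ and then down to just below $\min_{j \sim i}\modone{x_j}$, staying in the chamber by construction. Iterating produces $x''$ in the toric chamber of $x$ with $\bar\alpha_G(x'') = \bar\alpha_G(x')$. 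Finally, any two torus points sharing the same $\bar\alpha_G$-value have representatives in $[0,1)^V$ lying in the same (convex) chamber of $\AAA(G)$, so they are joined by a straight segment in $\R^V/\Z^V \setminus \AAA_{\tor}(G)$; hence $x''$ and $x'$ lie in the same toric chamber, and so do $x$ and $x'$. Surjectivity is immediate: given $\omega \in \Acyc(G)$, the point with $\modone{x_{v_k}} = k/(n+1)$ for a linear extension $v_1, \dots, v_n$ of $\omega$ satisfies $\bar\alpha_G(x) = \omega$.

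The main technical obstacle is rigorously setting up the local-behavior lemma: one must show that paths can be chosen so that wrap events are isolated, involve a single coordinate at a time, and do not coincide with transversal encounters of any toric hyperplane $H^{\tor}_{jk}$ for $\{j,k\}$ disjoint from the wrapping coordinate. Once this genericity is secured, the identification of each wrap with a source-to-sink flip is essentially forced by the behavior of $\modone{\cdot}$ at integer values, and the rest of the argument is straightforward bookkeeping with the definitions of $\bar\alpha_G$ and $\equiv$.
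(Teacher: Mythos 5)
The paper does not prove Theorem~\ref{thm:bijection} itself; it is cited directly from \cite{Develin:15}, so there is no internal proof to compare against. Judged on its own, your argument is correct and is essentially the geometric argument used in \cite{Develin:15}: the key observation, that a coordinate of a point in a toric chamber wrapping past the identified endpoint $0\sim 1$ of $\R/\Z$ is exactly a source-to-sink (or sink-to-source) flip at that vertex, because avoidance of the toric hyperplanes forces the wrapping coordinate to be extremal among itself and its graph neighbors both immediately before and immediately after the wrap, is the right mechanism, and your three-part scheme (local constancy of $\bar\alpha_G$ away from wraps; a generic path in the connected open chamber having finitely many, isolated, single-coordinate, transversal wrap events; and the reverse construction that realizes an abstract flip by a single-coordinate wrap, followed by the convexity argument in $[0,1)^V$) covers both directions of the ``iff'' as well as surjectivity.

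Two minor points worth tidying. First, the worry you raise about ``transversal encounters of any toric hyperplane $H^{\tor}_{jk}$ for $\{j,k\}$ disjoint from the wrapping coordinate'' is vacuous: if $\{j,k\}\in E$ the path never meets $H^{\tor}_{jk}$ at all since it remains in the open chamber, and if $\{j,k\}\notin E$ then $H^{\tor}_{jk}\notin\AAA_{\tor}(G)$ and it plays no role in $\bar\alpha_G$. The only genericity you actually need is that the sets $\{t:\gamma_i(t)=0\}$ are finite and pairwise disjoint, which is standard for, say, piecewise-linear paths after a small perturbation inside the open chamber. Second, in the final step of the injectivity argument it is worth saying explicitly why the straight segment in $[0,1)^V$ between the two lifts projects into $\R^V/\Z^V\setmin\AAA_{\tor}(G)$: since both lifts lie in the (convex) ordinary chamber $c(P(G,\omega))$ with $\omega=\bar\alpha_G(x')$, the whole segment satisfies the strict inequalities $x_i<x_j$ for $i<_{P}j$, and on $[0,1)^V$ the preimage of $H^{\tor}_{ij}$ is exactly $H_{ij}$, so the projected segment misses every toric hyperplane. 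With those clarifications, the proof is complete.
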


By Theorem~\ref{thm:bijection}, every pair $(G,[\omega])$ of a graph
$G$ and $\omega\in\Acyc(G)$ determines a toric poset, and we denote
this by $P(G,[\omega])$. \emph{Specifically, $P(G,[\omega])$ is the
  toric poset $P(c)$ such that $\bar\alpha_G(c)=[\omega]$}.
If the graph $G$ is understood, then we may
denote the corresponding toric chamber by
$c_{[\omega]}:=c(P(G,[\omega]))$.

If $G=(V,E)$ is fixed, then the unit cube $[0,1]^V$ in $\R^V$ is the
union of order polytopes $\O(P(G,\omega))$, any two of which only
intersect in a subset of a flat of $\AAA(G)$:
\begin{equation}\label{eq:unit-cube}
  [0,1]^V=\bigcup_{\omega\in\Acyc(G)}\O(P(G,\omega))\,.
\end{equation}
When $G$ is understood, we will say that the order polytopes $\O(P(G,\omega))$ and $\O(P(G,\omega'))$ are torically equivalent whenever $\omega'\in[\omega]$. 
Under the natural quotient $q\colon[0,1]^V\to\R^V/\Z^V$, each order
polytope $\O(P(G,\omega))$ is mapped into the closed toric chamber
$\bar{c}_{[\omega]}$. Moreover, by Theorem~\ref{thm:bijection}, the closed chambers
of $\AAA_{\tor}(G)$ are unions of $q$-images of torically equivalent order polytopes. 

\begin{cor}\label{cor:order-polytope}
  Let $P=P(G,[\omega])$ be a toric poset, and
  $\quot\colon[0,1]^V\to\R^V/\Z^V$ the natural quotient. The closure
  of the chamber $c(P)$ is
  \[
  \overline{c(P)}=\bigcup_{\omega'\in[\omega]}q(\O(P(G,\omega')))\,.
  \]
\end{cor}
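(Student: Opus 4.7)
The two inclusions can be verified separately; both rely on Theorem~\ref{thm:bijection} together with the decomposition in \eqref{eq:unit-cube}.

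For the inclusion $\supseteq$, fix $\omega' \in [\omega]$. The relative interior of $\O(P(G,\omega'))$ inside $[0,1]^V$ is $c(P(G,\omega')) \cap (0,1)^V$, and on this set $\bar\alpha_G \circ q$ returns $\omega'$. Hence $q$ sends the interior of $\O(P(G,\omega'))$ into the open toric chamber $c_{[\omega']} = c(P)$. Continuity of $q$, together with the fact that $\O(P(G,\omega'))$ is the closure of its interior, then gives $q(\O(P(G,\omega'))) \subseteq \overline{c(P)}$.

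For the inclusion $\subseteq$, let $x \in \overline{c(P)}$ and choose a sequence $\{x_n\}$ in the open chamber $c_{[\omega]}$ with $x_n \to x$. For each $n$, pick a lift $\tilde x_n \in [0,1)^V$ of $x_n$; by \eqref{eq:unit-cube} and the fact that $x_n$ lies in no toric hyperplane, $\tilde x_n$ belongs to the interior of a unique order polytope $\O(P(G,\omega_n))$, and Theorem~\ref{thm:bijection} applied to the open chamber forces $\omega_n \in [\omega]$. Since $[\omega]$ is finite, pass to a subsequence on which $\omega_n$ is a constant $\omega' \in [\omega]$, and by compactness of $[0,1]^V$ pass to a further subsequence along which $\tilde x_n \to \tilde x$ for some $\tilde x \in [0,1]^V$. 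Because $\O(P(G,\omega'))$ is closed, $\tilde x \in \O(P(G,\omega'))$; and continuity of $q$ yields $q(\tilde x) = \lim q(\tilde x_n) = x$, so $x \in q(\O(P(G,\omega')))$ as required.

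The only subtle point is that no global continuous section of the quotient $q\colon[0,1]^V \to \R^V/\Z^V$ exists across the boundary of the cube, so one cannot simply lift $x$ and read off a single order polytope. This is handled above by approximating $x$ from within the open toric chamber, where lifting and $\bar\alpha_G$ are well-behaved, and then extracting a convergent subsequence using the finiteness of $[\omega]$ and compactness of $[0,1]^V$.
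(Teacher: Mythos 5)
Your proof is correct and is essentially a careful, rigorous expansion of the argument the paper gives only in prose: the paper deduces the corollary directly from Theorem~\ref{thm:bijection} and the decomposition~\eqref{eq:unit-cube}, observing that each order polytope maps into $\bar c_{[\omega]}$ and that the closed chambers are unions of $q$-images of torically equivalent order polytopes; your two inclusions make both of those steps precise, with the compactness/subsequence argument supplying the detail the paper leaves implicit for boundary points. One tiny imprecision: you say $\tilde x_n$ lies in the \emph{interior} of a unique order polytope, but since $\tilde x_n\in[0,1)^V$ may have a coordinate equal to $0$, it can sit on a face of the cube; avoiding the hyperplanes $H_{ij}$ only guarantees it lies in a unique chamber $c(P(G,\omega_n))$ and hence in a unique order polytope $\O(P(G,\omega_n))$, which is all your subsequent argument uses.
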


\subsection{Toric faces and preposets}\label{subsec:toric-faces}

Let $P=P(c)$ be a toric poset over $G=(V,E)$. To define objects like a
face of $P$ or its dimension, it helps to first lift $c$ up to a
chamber of the \emph{affine graphic arrangement} which lies in $\R^V$:
\begin{equation}
  \label{affine-arrangement-defn}
  \AAA_{\aff}(G):= \quot^{-1}(\AAA_{\tor}(G)) = \quot^{-1}(\quot(\AAA(G)))
  =\bigcup_{\substack{ \{i,j\} \in E\\ k \in \Z}} \{x \in \R^V: x_i = x_j+k \}.
\end{equation}
The affine chambers are open unbounded convex polyhedral regions in $\R^V$, the universal cover of $\R^V/\Z^V$. The path lifting property guarantees that two points $x$ and $y$ in $\R^V/\Z^V\setmin\AAA_{\tor}(G)$ are in the same toric chamber if and only if they have lifts $\lift{x}$ and $\lift{y}$ that are in the same affine chamber. Moreover, since Corollary~\ref{cor:order-polytope} characterizes the closed toric chamber $\overline{c(P)}$ as a union of torically equivalent order polytopes under a universal covering map, each closed affine chamber is a union of translated copies of torically equivalent order polytopes in $\R^V$. 

We usually denote an affine chamber by $\lift{c}$ or $c^{\aff}$. Each
hyperplane $H^{\tor}_{ij}$ has a unique preimage containing the
origin in $\R^V$ called its \emph{central preimage}; this is the
ordinary hyperplane $H_{ij}=\{x\in\R^V:x_i=x_j\}$. Thus, the set of
central preimages of $\AAA_{\tor}(G)$ is precisely the graphic
arrangement $\AAA(G)$ in $\R^V$. Each closed affine chamber $\bar{c}^{\aff}$
contains at most one order polytope $\O(P(G,\omega))$ for
$\omega\in\Acyc(G)$. Affine chambers whose closures contain precisely one order polytope $\O(P(G,\omega))$ are \emph{central affine chambers}.

We will call nonempty sets that arise as intersections of hyperplanes
in $\AAA_{\aff}(G)$ \emph{affine flats} and nonempty sets that are
intersections of hyperplanes in $\AAA_{\tor}(G)$ \emph{toric
  flats}. Since the toric flats have a nonempty intersection, they
form a lattice that is denoted $L(\AAA_{\tor}(G))$, and partially ordered by
reverse inclusion.

Since a toric flat of $\AAA_{\tor}(G)$ is the image of a flat of
$\AAA(G)$, it too is determined by a partition $\pi$ of $V$, and so
it is of the form
\begin{equation}\label{eq:toric-D(S)}
  D^{\tor}_\pi=\{x\in\R^V/\Z^V:\text{ $x_i=x_j$ for every pair $i,j$ in the same
    block $B_k$ of $\pi$}\}=q(D_\pi)\,.
\end{equation}
Since $\R^V\stackrel{\quot}{\longto}\R^V/\Z^V$ is a covering map, it
is well-founded to declare the dimension of a toric flat
$D^{\tor}_\pi$ in $\R^V/\Z^V$ to be the same as the dimension of its
central preimage $D_\pi$ in $\R^V$.

Recall that a partition $\pi=\{B_1,\dots,B_r\}$ is compatible with an ordinary poset
$P$ if contracting the blocks of $\pi$ yields a preposet
$P_\pi=(\pi,\leq_P)$ that is acyclic (also a poset). The notion of
compatible partitions does not carry over well to toric posets,
because compatibility is not preserved by toric
equivalence. Figure~\ref{fig:K_3} shows an example of this: the
preposets $(\pi,\leq_{P_1})$ and $(\pi,\leq_{P_2})$ are
acyclic but $(\pi,\leq_{P_3})$ is not. Despite this, every
set $D^{\tor}_\pi$, whether or not it is a toric flat of $\AAA_{\tor}(G)$, intersects the closed
toric chamber $\overline{c(P)}$ in at least the line $x_1=\cdots=x_n$. We denote this intersection by
\begin{equation}\label{eq:F^tor_pi}
  \bar{F}^{\tor}_\pi(P):=\overline{c(P)}\cap D^{\tor}_\pi\,.
\end{equation}
If $D^{\tor}_\pi$ does not intersect $c(P)$, then we say that $\pi$ is a \emph{toric face partition}, since it intersects the closed toric chamber along its boundary. Compare this to the definition of face partitions of an ordinary poset $P(G,\omega)$, which are those $\pi\in\Pi_V$ characterized by $D_\pi$ being a flat of the graphic arrangement of the transitive closure, or equivalently, by $D_\pi\cap c(P(G,\omega))=\varnothing$. The transitive closure $\bar{G}(P(G,\omega))$ is formed from $G$ by adding all additional edges $\{i,j\}$ such that $H_{i,j}\cap c(P(G,\omega))=\varnothing$. Similarly, we can define the \emph{toric transitive closure} of $P(G,[\omega])$ as the graph $G$ along with the extra edges $\{i,j\}$ such that $H_{i,j}^{\tor}\cap c(P)=\varnothing$. This was done in \cite{Develin:15}, and we will return to it in the Section~\ref{subsec:toric-hasse} when we discuss toric Hasse diagrams.

Now, let $\pi\in\Pi_V$ be an arbitrary partition. Since flats of $\AAA_{\tor}(K_V)$ are closed under intersections, there is a unique maximal toric subspace $D^{\tor}_{\bar\pi}$ (that is, of minimal dimension) for which
$\bar{F}^{\tor}_\pi=\overline{c(P)}\cap D^{\tor}_{\bar\pi}$. The
partition $\bar\pi$ is the unique minimal coarsening of $\pi$ for which
$\bar{F}^{\tor}_\pi=\bar{F}^{\tor}_{\bar\pi}$, and it is the
lattice-join of all such partitions. We call it the \emph{closure} of
$\pi$ with respect to the toric poset $P$, denoted $\cl_P^{\tor}(\pi)$, and we define $\dim(\bar{F}^{\tor}_\pi):=\dim(D^{\tor}_{\bar\pi})$. A partition $\pi$ is \emph{closed} with respect to the toric poset $P$ if $\pi=\cl_P^{\tor}(\pi)$. Note that the closure is defined for all partitions, not just toric face partitions.

\begin{defn}
  A set $F\subseteq\R^V/\Z^V$ is a \emph{closed face}
  of the toric poset $P$ if $F=\bar{F}^{\tor}_\pi=\overline{c(P)}\cap
  D_\pi^{\tor}$ for some closed toric face partition $\pi=\cl_P^{\tor}(\pi)$. The
  interior of $\bar{F}^{\tor}_\pi$ with respect to the subspace
  topology of $D^{\tor}_\pi$ is called an \emph{open face} of $P$, and
  denoted $F^{\tor}_\pi$. Let $\Faces(P)$ and $\overline{\Faces}(P)$
  denote the set of open and closed faces of $P$, respectively.
  Finally, define the faces of the toric graphic
  arrangement $\AAA_{\tor}(G)$ to be the faces of the toric posets over $G$:
  \[
  \Faces\AAA_{\tor}(G)
  =\bigcup_{\omega\in\Acyc(G)}\Faces(P(G,[\omega]))\,,\qquad\qquad
  \overline{\Faces}\,\AAA_{\tor}(G)
  =\bigcup_{\omega\in\Acyc(G)}\overline{\Faces}(P(G,[\omega]))\,.
  \]
  Toric faces of co-dimension $1$ are called \emph{facets}.
\end{defn}

The following remark is the toric analogue of Remark~\ref{rem:faces}.

\begin{rem}\label{rem:toric-faces}
Let $P=P(G,[\omega])$ be a toric poset. 
The dimension of $\bar{F}_\pi^{\tor}(P):=\overline{c(P)}\cap D_\pi^{\tor}$ is simply the maximum dimension of $\overline{c^{\aff}(P)}\cap D_\pi$ taken over all affine chambers that descend down to $\overline{c(P)}$. Since closed affine chambers are unions of translations of order polytopes, this is the maximum dimension of $\overline{c(P(G,\omega'))}\cap D_\pi$ taken over all $\omega'\in[\omega]$. In other words, 
\[
\dim F_\pi^{\tor}(P(G,[\omega]))=\max_{\omega'\in[\omega]}\dim F_\pi(P(G,\omega'))\,.
\]
On the level of graphs, this is the maximum number of strongly connected components that $\omega'/\!\!\sim_\pi$ can have for some $\omega'\in[\omega]$. In particular, a partition $\pi$ is closed with respect to $P(G,[\omega])$ if and only if $\omega'/\!\!\sim_\pi$ is acyclic for \emph{some} $\omega'\in[\omega]$.

  As long as $G$ is connected, there is a unique $1$-dimensional face of $\AAA_{\tor}(G)$, which is the
  line $x_1=\cdots =x_n$ and is contained in the closure of
  every chamber. There are no $0$-dimensional faces of
  $\AAA_{\tor}(G)$. The $n$-dimensional faces of $\AAA_{\tor}(G)$ are its chambers. Additionally, $\R^V/\Z^V$ is a disjoint union of open faces of
  $\AAA_{\tor}(G)$:
  \begin{equation}\label{eq:toric-faces}
  \R^V/\Z^V=\dot{\bigcup_{F\in\Faces\AAA_{\tor}(G)}} F\,.
  \end{equation}
\end{rem}

As in the case of ordinary posets, there is a canonical bijection between the closed toric face partitions of $P$ and open faces (or closed faces) of $P$, via $\pi\mapsto F_\pi^{\tor}$. To classify the faces of a toric poset, it suffices to classify the closed toric face partitions. 

\begin{thm}\label{thm:toric-face-partitions}
  Let $P=P(G,[\omega])$ be a toric poset over $G=(V,E)$. A partition $\pi$ of $V$ is a closed toric face partition of $P$ if and only if it is connected and compatible with $P(G,\omega')$, for some $\omega'\in[\omega]$.
\end{thm}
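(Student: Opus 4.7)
The plan is to bridge between the toric and ordinary settings using two tools: Remark~\ref{rem:toric-faces}, which says that $\pi$ is closed with respect to $P(G,[\omega])$ iff $\omega''/\!\!\sim_\pi$ is acyclic for some $\omega''\in[\omega]$, together with the following ``lifting equivalence'' that I first establish:
\[
D^{\tor}_\pi \cap c(P(G,[\omega])) \neq \varnothing \iff \exists\,\omega'' \in [\omega]:\ D_\pi \cap c(P(G,\omega'')) \neq \varnothing.
\]
The $(\Leftarrow)$ direction scales and diagonally shifts any $x \in D_\pi \cap c(P(G,\omega''))$ into $(0,1)^V$ (both sets being invariant under these operations) and projects by $q$, landing in the open toric chamber by Corollary~\ref{cor:order-polytope}. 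For $(\Rightarrow)$, given $y \in D^{\tor}_\pi \cap c(P(G,[\omega]))$, take the unique lift $x\in[0,1)^V$; then $x \in D_\pi$ automatically, and Theorem~\ref{thm:bijection} ensures $\omega(x) \in [\omega]$, so $x \in D_\pi \cap c(P(G,\omega(x)))$. Contrapositively, $\pi$ is a toric face partition of $P(G,[\omega])$ iff $\pi$ is an ordinary face partition of $P(G,\omega'')$ for \emph{every} $\omega''\in[\omega]$.

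For the forward direction of the theorem, suppose $\pi$ is a closed toric face partition. Remark~\ref{rem:toric-faces} supplies an $\omega'\in[\omega]$ with $\pi$ compatible with $P(G,\omega')$, and the lifting equivalence shows $\pi$ is also an ordinary face partition of $P(G,\omega')$. Thus $\pi$ is a closed face partition of the ordinary poset $P(G,\omega')$, and Theorem~\ref{thm:face-partitions} delivers the required connectedness.

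The reverse direction is the more delicate half. Given $\pi$ connected and compatible with some $P(G,\omega')$, Remark~\ref{rem:toric-faces} gives closedness with respect to $P(G,[\omega])$ immediately. By the lifting equivalence, being a toric face partition amounts to $\pi$ being an ordinary face partition of $P(G,\omega'')$ for \emph{every} $\omega''\in[\omega]$, whereas the hypothesis only furnishes one $\omega'$; this is where I expect the main obstacle to lie. To close the gap I would prove the stronger claim that each block $B$ of $\pi$ induces a connected subgraph $G[B]$. Granting this, $D_\pi$ is cut out by the hyperplanes $H_{ij}$ for $\{i,j\}$ ranging over spanning trees of the various $G[B]$'s, making $D_\pi$ a flat of $\AAA(G)$ itself, hence of $\AAA(\bar G(P(G,\omega'')))$ for every $\omega''\in[\omega]$, so $\pi$ is an ordinary face partition of $P(G,\omega'')$ for every such $\omega''$.

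To prove $G[B]$ is connected, I would argue by contradiction: suppose $B = B' \sqcup B''$ splits into two nonempty $G$-components. Hasse-connectedness of the subposet on $B$ (the ``connected'' hypothesis) forces a Hasse edge of the subposet across the split, say $b'<_{P(G,\omega')|_B}b''$ with $b'\in B'$, $b''\in B''$. Any witnessing directed chain $b'=v_0\to v_1\to\cdots\to v_m=b''$ in $\omega'$ cannot stay inside $B$ (that would yield a $G[B]$-walk between $B'$ and $B''$), and since $b'\to b''$ is a Hasse edge of $(B,\leq_{P(G,\omega')|_B})$, no intermediate vertex $v_l$ lies in $B$. Letting $D_1,\dots,D_s$ be the distinct $\pi$-blocks visited by $v_1,\dots,v_{m-1}$, the chain contributes $B\leq_\pi D_1\leq_\pi\cdots\leq_\pi D_s\leq_\pi B$ in the contracted graph $\omega'/\!\!\sim_\pi$. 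Since each $D_i\neq B$ and $s\geq 1$, this is a nontrivial directed cycle, contradicting compatibility of $\pi$ with $P(G,\omega')$.
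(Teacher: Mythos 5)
Your proof is correct, and it takes a genuinely different path from the paper's. The paper's forward direction works by a dimension count: if $\bar{F}^{\tor}_\pi$ has dimension $k$, it picks an $\omega'$ for which $D_\pi$ meets $\O(P(G,\omega'))$ in a $k$-dimensional face, and then separately argues closedness of $\pi$ w.r.t.\ $P(G,\omega')$ by a coarsening argument that descends back to the torus. You instead make the ``lifting equivalence'' an explicit lemma (giving the clean characterization that $\pi$ is a toric face partition iff it is an ordinary face partition for \emph{every} $\omega''\in[\omega]$), and get compatibility directly from Remark~\ref{rem:toric-faces}; the two then combine cleanly via Theorem~\ref{thm:face-partitions}. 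For the converse, the paper asserts flatly ``Since $\pi$ is connected, $D_\pi$ is a flat of $\AAA(G)$,'' but as stated that implication is false if ``connected'' only means Hasse-connectedness of the induced subposets (e.g., $G$ the path $1\text{--}2\text{--}3$, $\omega\colon 1\to 2\to 3$, $\pi=\{\{1,3\},\{2\}\}$ is Hasse-connected but $D_\pi\notin L(\AAA(G))$); compatibility is also needed, exactly as your contradiction argument makes explicit. So your version not only takes a different route but also supplies the missing justification for the key step in the paper's converse, at the modest cost of a somewhat longer argument. Both approaches conclude with $\pi$ closed w.r.t.\ the toric poset via the same mechanism (Remark~\ref{rem:toric-faces}/Lemma~\ref{lem:partitions}(a)).
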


The proof of Theorem~\ref{thm:toric-face-partitions} will be done later in this section, after the following lemma, which establishes that $\cl_P^{\tor}$ is a closure operator \cite{ward1942closure} on the partition lattice $\Pi_V$ and compares it with $\cl_P$.

\begin{lem}\label{lem:partitions}
Let $\omega$ be an acyclic orientation of a graph $G=(V,E)$, and $\pi$ a partition of $V$. 
\begin{enumerate}[(a)]
\item If $\pi$ is closed with respect to $P(G,\omega)$, then $\pi$ is closed with respect to $P(G,[\omega])$.
\item Closure is monotone: if $\pi\leq_V\pi'$, then $\cl_P^{\tor}(\pi)\leq_V\cl_P^{\tor}(\pi')$.
\item If $\pi\leq_V\pi'\leq_V\cl_P^{\tor}(\pi)$, then $\cl_P^{\tor}(\pi')=\cl_P^{\tor}(\pi)$.
\item $\cl_{P(G,[\omega])}^{\tor}(\pi)\leq_V\cl_{P(G,\omega)}(\pi)$.
\end{enumerate}
\end{lem}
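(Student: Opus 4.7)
The plan is to handle the four parts in order, with (b) doing the real geometric work and (a), (c), (d) all following quickly from (b) plus the characterization of toric closure in Remark~\ref{rem:toric-faces}.

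For (a), I would argue purely combinatorially. If $\pi$ is closed with respect to the ordinary poset $P(G,\omega)$, then the quotient $\omega/\!\!\sim_\pi$ is acyclic. Since $\omega \in [\omega]$, Remark~\ref{rem:toric-faces} says this is exactly the condition for $\pi$ to be toric-closed with respect to $P(G,[\omega])$.

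The main work goes into (b), which I would prove geometrically using the definition of $\cl_P^{\tor}$ as giving the minimal-dimensional toric subspace cutting out $\bar{F}^{\tor}_\pi$ from $\overline{c(P)}$. Write $\bar\pi = \cl_P^{\tor}(\pi)$ and $\bar{\pi}' = \cl_P^{\tor}(\pi')$. From $\pi \leq_V \pi'$ we get $D^{\tor}_{\pi'} \subseteq D^{\tor}_\pi$, hence $\bar{F}^{\tor}_{\pi'} \subseteq \bar{F}^{\tor}_\pi$; the latter set sits inside $D^{\tor}_{\bar\pi}$ by definition of $\bar\pi$. Now take the join $\sigma := \bar\pi \vee \bar{\pi}'$, for which $D^{\tor}_\sigma = D^{\tor}_{\bar\pi} \cap D^{\tor}_{\bar{\pi}'}$. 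Intersecting with $\overline{c(P)}$ and using that $\bar{F}^{\tor}_{\pi'} \subseteq D^{\tor}_{\bar\pi}$ gives $D^{\tor}_\sigma \cap \overline{c(P)} = \bar{F}^{\tor}_{\pi'}$. Minimality of $D^{\tor}_{\bar{\pi}'}$ then forces $\sigma = \bar{\pi}'$, i.e. $\bar\pi \leq_V \bar{\pi}'$. The subtle point to verify carefully is that the minimality definition really does single out a unique subspace, so that $D^{\tor}_\sigma \subseteq D^{\tor}_{\bar{\pi}'}$ combined with both yielding the same intersection forces equality.

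Parts (c) and (d) are then short consequences. For (c), note that $\bar\pi := \cl_P^{\tor}(\pi)$ is itself closed (an immediate consequence of the defining property $\bar{F}^{\tor}_\pi = \bar{F}^{\tor}_{\bar\pi}$, which gives $\cl_P^{\tor}(\bar\pi) = \bar\pi$). Applying (b) to the sandwich $\pi \leq_V \pi' \leq_V \bar\pi$ yields $\bar\pi \leq_V \cl_P^{\tor}(\pi') \leq_V \cl_P^{\tor}(\bar\pi) = \bar\pi$, forcing equality. For (d), observe that $\cl_P(\pi)$ is ordinary-closed by the idempotence of $\cl_P$, so by part (a) it is toric-closed, meaning $\cl_P^{\tor}(\cl_P(\pi)) = \cl_P(\pi)$; then (b) applied to $\pi \leq_V \cl_P(\pi)$ gives $\cl_P^{\tor}(\pi) \leq_V \cl_P^{\tor}(\cl_P(\pi)) = \cl_P(\pi)$, as desired. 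The only real obstacle is making the minimality argument in (b) watertight; everything else is then formal manipulation of closure operators.
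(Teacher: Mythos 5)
Your proof is correct and follows essentially the same route as the paper's: (a) via Remark~\ref{rem:toric-faces}, (c) by applying monotonicity and idempotence to the chain $\pi\leq_V\pi'\leq_V\cl_P^{\tor}(\pi)$, and (d) by noting $\cl_{P(G,\omega)}(\pi)$ is toric-closed by (a) and then invoking (b). The only real difference is that the paper dismisses part (b) as obvious, while you supply an explicit geometric argument; your argument — passing to the join $\sigma=\bar\pi\vee\bar\pi'$, computing $D^{\tor}_\sigma\cap\overline{c(P)}=\bar{F}^{\tor}_{\pi'}$, and invoking the characterization of $\bar\pi'$ as the coarsest partition over $\pi'$ cutting out the same face — is a sound way to make the asserted monotonicity watertight.
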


\begin{proof}
If $\pi$ is closed with respect to $P(G,\omega)$, then the preposet $\omega/\!\!\sim_\pi$ is acyclic. By Remark~\ref{rem:toric-faces}, this means that $\pi$ is closed with respect to $P(G,[\omega])$, which establishes (a).

Part~(b) is obvious. Part~(c) follows from taking the closure of each term in the chain of inequalities $\pi\leq_V\pi'\leq_V\cl_P^{\tor}(\pi)$:
\[
\cl_P^{\tor}(\pi)\leq_V\cl_P^{\tor}(\pi')\leq_V
\cl_P^{\tor}\!\left(\cl_P^{\tor}(\pi)\right)=\cl_P^{\tor}(\pi)\,.
\]
To prove (d), let $\bar\pi=\cl_{P(G,\omega)}(\pi)$, which is closed with respect to $P(G,\omega)$. By (a), $\bar\pi$ is closed with respect to $P(G,[\omega])$. Using this, along with (b) applied to $\pi\leq_V\bar\pi$, yields
\[
\cl_{P(G,[\omega])}^{\tor}(\pi)\leq_V
\cl_{P(G,[\omega])}^{\tor}(\bar\pi)=\bar\pi=\cl_{P(G,\omega)}(\pi)\,,
\]
whence the theorem.
\end{proof}

\begin{ex}
For an example both of where the converse to Lemma~\ref{lem:partitions}(a) fails, and where $\cl_{P(G,[\omega])}^{\tor}(\pi)\lneq_V\cl_{P(G,\omega)}(\pi)$, consider $G=K_3$, and the partition $\pi=1/23$. Using the same notation as in Example~\ref{ex:K_3} and Figure~\ref{fig:K_3}, we see that $\cl_{P(G,\omega_3)}(\pi)=123$, because the intersection of $\overline{c(P(G,\omega_3))}$ and $D_\pi$ is one-dimensional. Equivalently, the preposet $\omega_3/\!\!\sim_\pi$ has one strongly connected component. In contrast, the intersection of $\overline{c(P(G,[\omega_3])}$ with $D_\pi^{\tor}$ is two-dimensional. Indeed, the preposets $\omega_1/\!\!\sim_\pi$ and $\omega_2/\!\!\sim_\pi$ both have two strongly connected components, and $\omega_1\equiv\omega_2\equiv\omega_3$. Therefore,
\[
1/23=\pi=\cl_{P(G,[\omega_3])}^{\tor}(\pi)\lneq_V \cl_{P(G,\omega_3)}(\pi)=123\,,
\]
and so $\pi$ is closed with respect to $P(G,[\omega_3])$ but not with respect to $P(G,\omega_3)$.
\end{ex}

\begin{proof}[Proof of Theorem~\ref{thm:toric-face-partitions}]
Suppose $\pi$ is a closed toric face partition of $P=P(G,[\omega])$, and that $\bar{F}_\pi^{\tor}:=\overline{c(P)}\cap D_\pi^{\tor}$ has dimension $k$. Then for some $\omega'\in[\omega]$, the hyperplane $D_\pi$ must intersect the order polytope $\O(P(G,\omega'))$ in a $k$-dimensional face. Thus, $\bar{F}_\pi:=\overline{c(P(G,\omega'))}\cap D_\pi$ has dimension $k$ for some $\omega'\in[\omega]$, and so $\pi$ is a face partition of $P(G,\omega')$. To see why $\pi$ is closed with respect to $P(G,\omega')$, suppose there were a coarsening $\pi'\geq_V\pi$ such that 
\begin{equation}\label{eq:pi'}
\bar{F}_{\pi'}=\overline{c(P(G,\omega'))}\cap D_{\pi'}=\overline{c(P(G,\omega'))}\cap D_\pi=\bar{F}_\pi\,.
\end{equation}
It suffices to show that $\pi'=\pi$. Descending down to the torus, the intersection $\bar{F}_{\pi'}^{\tor}:=\overline{c(P)}\cap D_{\pi'}^{\tor}$ must have dimension at least $k$ by Eq.~\eqref{eq:pi'}, but no more than $k$ because $\bar{F}_{\pi'}^{\tor}\subseteq\bar{F}_\pi^{\tor}$. Thus, we have equality $\bar{F}_{\pi'}^{\tor}=\bar{F}_\pi^{\tor}$, which means $\pi'\leq_V\cl_P^{\tor}(\pi)=\pi$, whence $\pi'=\pi$. Since $\pi$ is a closed face partition with respect to $P(G,\omega')$, it is connected and compatible with $P(G,\omega')$ by Theorem~\ref{thm:face-partitions}.

Conversely, suppose that $\pi$ is connected and compatible with respect to $P(G,\omega')$ for some $\omega'\in[\omega]$. By Theorem~\ref{thm:face-partitions}, $\pi$ is a closed face partition of $P(G,\omega')$. Since $\pi$ is connected, $D_\pi$ is a flat of $\AAA(G)$. Therefore, $D_\pi^{\tor}$ is a toric flat of $\AAA_{\tor}(G)$, and so $\bar{F}_\pi^{\tor}:=\overline{c(P)}\cap D_\pi^{\tor}$ is a face of the toric poset $P(G,[\omega'])=P(G,[\omega])=P$. Therefore, $\pi$ is a toric face partition. Closure of $\pi$ with respect to $P=P(G,[\omega])=P(G,[\omega'])$ follows immediately from Lemma~\ref{lem:partitions}(a) applied to the fact that $\pi$ is closed with respect to $P(G,\omega')$. 
\end{proof}

Unlike the ordinary case, where faces of posets are literally faces of a convex polyhedral cone, it is not quite so ``geometrically obvious'' what subsets can be toric faces. The following example illustrates this.

\begin{ex}\label{ex:V=2}
  There are only two simple graphs $G=(V,E)$ over $V=\{1,2\}$: The
  edgeless graph $G_0=(V,\varnothing)$, and the complete graph
  $K_2=(V,\{\{1,2\}\})$. For both graphs, the complement $\R^2/\Z^2\setmin\AAA(G)$ is
  connected. The respective chambers are
  \[
  c_0:=\R^2/\Z^2\,,\qquad \text{and}\qquad c:=\R^2/\Z^2\setmin H_{12}^{\tor}\,,
  \]
  and so they represent different toric posets, $P(c_0)$ and $P(c)$.
  Despite this, these chambers have the same topological closures:
  $\bar{c_0}=\bar{c}=\R^2/\Z^2$. The lattice of flats of $G_0$ contains one   element: $\AAA(G_0)=\{D^{\tor}_{1/2}\}$, and this flat arises from the permutation $\pi=1/2$.  
  The lattice of flats of $K_2$ has two elements:
  $\AAA(K_2)=\{D^{\tor}_{1/2},D^{\tor}_{12}\}$, where
  $D^{\tor}_{1/2}=\R^V/\Z^V$, and $D^{\tor}_{12}=H_{12}^{\tor}=\{x_1=x_2\}$. Thus,
  the closed faces of the corresponding toric posets are
  \[
  \overline{\Faces}(P(c_0))=\{\R^V/\Z^V\}\,,\qquad
  \text{and}\qquad\overline{\Faces}(P(c))
  =\{\R^V/\Z^V,\;H_{12}^{\tor}\}.
  \]
\end{ex}

The subtlety in Example~\ref{ex:V=2} does not arise for ordinary
posets, because distinct ordinary posets never have chambers with the
same topological closure. In contrast, if $G=(V,E)$ and $G'=(V,E')$
are both forests, then $\AAA_{\tor}(G)$ and $\AAA_{\tor}(G')$ both
have a single toric chamber. This is because the number of chambers is counted by the Tutte polynomial evaluation $T_G(1,0)$, which is always $1$ for a forest; see~\cite{Develin:15}. In this case, the closures of both chambers will be all of $\R^V/\Z^V$. A more complicated example involving a toric poset over a graph with three vertices, will appear soon in Example~\ref{ex:K_3-part2}.

Recall the map $\bar{\alpha}_G$ from Eq.~\eqref{eq:toric-chamber-bijection} that
sends a point $x$ in $\R^V/\Z^V$ to a preposet $\omega(x)$. By
Theorem~\ref{thm:bijection}, when restricted to the points in
$\R^V/\Z^V\setmin\AAA_{\tor}(G)$, this map induces a bijection between
toric posets and toric chambers. Toric faces $F_\pi^{\tor}$ that are open in $D_\pi^{\tor}$ are chambers in lower-dimensional arrangements that are contractions of $\AAA_{\tor}(G)$, namely by the subspace $D_\pi^{\tor}$. Thus, the bijection between toric equivalence classes of $\Acyc(G)$ ($n$-element preposets) and toric chambers ($n$-dimensional faces) extends naturally to a bijection between toric preposets over $G$ and open faces of $\AAA_{\tor}(G)$.

\begin{prop}\label{prop:toric-bijection}
  The map $\bar{\alpha}_G$ induces a bijection between
  $\Faces\AAA_{\tor}(G)$ and $\Pre(G)/\!\!\equiv$ as follows:
  \begin{equation}\label{eq:toric-face-bijection}
    \xymatrix{\R^V/\Z^V \ar@{>>}[d]
      \ar[r]^{\bar{\alpha}_G} & \Pre(G) \ar@{>>}[d] \\ \Faces
      \AAA_{\tor}(G) \ar@{.>}[r]_{\bar{\alpha}_G} & \Pre(G)/\!\!\equiv}
  \end{equation}
  In other words, given two points $x,x'$ in $\R^V/\Z^V$ the equivalence $\bar{\alpha}_G(x)\equiv\bar{\alpha}_G(x')$ holds if and only if
  $x,x'$ lie on the same open face of $\AAA_{\tor}(G)$.
\end{prop}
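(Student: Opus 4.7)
The plan is to reduce the statement to Theorem~\ref{thm:bijection} applied to the contracted graph $G/\!\!\sim_\pi$, for each closed toric face partition $\pi$. First I would verify that $\bar\alpha_G$ is constant on each open face of $\AAA_{\tor}(G)$: the preposet $\omega(x)$ records, for each edge $\{i,j\}\in E$, whether $\modone{x_i}<\modone{x_j}$, the reverse, or equality, and this data is determined by which toric hyperplanes contain $x$ and on which side of the remaining ones $x$ lies. Hence $\bar\alpha_G$ descends to a well-defined map $\Faces\AAA_{\tor}(G)\to\Pre(G)$, and post-composition with the natural quotient to $\Pre(G)/\!\!\equiv$ yields the dotted arrow in~\eqref{eq:toric-face-bijection}. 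Surjectivity of the induced map is straightforward: any $\omega_\pi\in\Pre(G)$ is realized by an explicit point $x$ in the torus, obtained by assigning each block $B_k$ of $\pi$ a distinct value in $[0,1)$ respecting the partial order on blocks.

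The substantive step is injectivity. Suppose $\bar\alpha_G(x)\equiv\bar\alpha_G(x')$. The key observation is that a flip move preserves the strongly connected component partition, since converting a source block into a sink block reverses edges incident to that block externally but leaves the internal structure of each block (and hence the SCC partition) unchanged. Therefore $\omega(x)$ and $\omega(x')$ share a common SCC partition $\pi$, which by Theorem~\ref{thm:toric-face-partitions} is a closed toric face partition; in particular $x,x'\in D^{\tor}_\pi$. The plan is then to invoke a toric analogue of Lemma~\ref{lem:homeomorphism}: the flat $D^{\tor}_\pi$ is homeomorphic to $\R^W/\Z^W$ via the projection $\r_\pi|_{D^{\tor}_\pi}$, for $W$ a system of block representatives, and the trace on $D^{\tor}_\pi$ of the remaining toric hyperplanes of $\AAA_{\tor}(G)$ coincides with $\AAA_{\tor}(G/\!\!\sim_\pi)$. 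Under this identification, the open face of $\AAA_{\tor}(G)$ containing $x$ corresponds to a toric chamber of $\AAA_{\tor}(G/\!\!\sim_\pi)$, and the flip equivalence on preposets in $\Pre(G)$ with SCC partition $\pi$ pulls back exactly to the flip equivalence on $\Acyc(G/\!\!\sim_\pi)$. Applying Theorem~\ref{thm:bijection} to $G/\!\!\sim_\pi$ then shows that $\r_\pi(x)$ and $\r_\pi(x')$ lie in the same toric chamber of $\AAA_{\tor}(G/\!\!\sim_\pi)$, whence $x$ and $x'$ lie in the same open face of $\AAA_{\tor}(G)$.

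The main obstacle I anticipate is justifying the toric analogue of Lemma~\ref{lem:homeomorphism}, i.e.\ identifying $F^{\tor}_\pi$ cleanly with a toric chamber of $\AAA_{\tor}(G/\!\!\sim_\pi)$. One has to check that the covering $q$ restricts compatibly on the flat $D_\pi\subseteq\R^V$ so that the induced integer lattice on $D^{\tor}_\pi$ matches the standard one on $\R^W$, and that the remaining toric hyperplanes of $\AAA_{\tor}(G)$ cut $D^{\tor}_\pi$ precisely along the hyperplanes of $\AAA_{\tor}(G/\!\!\sim_\pi)$ (with loops and multiedges discarded). Once this structural identification is in place, the correspondence of flip equivalence relations is essentially definitional, and Theorem~\ref{thm:bijection} finishes the argument.
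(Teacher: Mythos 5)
Your core strategy --- identifying each open face $F^{\tor}_\pi$ with a toric chamber of the contracted arrangement $\AAA_{\tor}(G/\!\!\sim_\pi)$ and reducing to Theorem~\ref{thm:bijection} applied to the smaller graph --- is the paper's own argument, given informally in the paragraph preceding the proposition.

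The one genuine flaw is the opening claim: it is \emph{not} true that $\bar\alpha_G$ is constant on the open faces of $\AAA_{\tor}(G)$, and the justification offered (``which side of the remaining [hyperplanes] $x$ lies'') is the wrong intuition for the torus, where the complement of a single toric hyperplane in $\R^V/\Z^V$ is connected. For $G=K_2$ the arrangement has a single chamber, namely all of $\R^2/\Z^2\setminus H^{\tor}_{12}$, yet $\bar\alpha_G$ assigns the orientation $1\to 2$ to $(0.1,0.2)$ and the orientation $2\to 1$ to $(0.2,0.1)$: distinct preposets that are merely $\equiv$-equivalent. What is true, and what well-definedness of the dotted arrow in~\eqref{eq:toric-face-bijection} actually requires, is that $\bar\alpha_G$ is constant only \emph{modulo} $\equiv$ on each open face. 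Your injectivity argument, run in reverse, already supplies this: if $x,x'$ lie in the same open face $F^{\tor}_\pi$, then $\bar{r}_\pi(x)$ and $\bar{r}_\pi(x')$ lie in the same chamber of $\AAA_{\tor}(G/\!\!\sim_\pi)$, so by Theorem~\ref{thm:bijection} for $G/\!\!\sim_\pi$ the quotients $\omega(x)/\!\!\sim_\pi$ and $\omega(x')/\!\!\sim_\pi$ are flip-equivalent, hence $\bar\alpha_G(x)\equiv\bar\alpha_G(x')$. Replacing the constancy claim with this modulo-$\equiv$ statement repairs the proof; the remainder of your argument (surjectivity, and injectivity via the preserved strongly-connected-component partition together with the contracted-arrangement identification) is sound. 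The toric Lemma~\ref{lem:homeomorphism} analogue you flag as the main obstacle is exactly Lemma~\ref{lem:toric-homeomorphism}, stated in Section~\ref{sec:toric-morphisms}; it is elementary and could be invoked here.
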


\begin{defn}\label{defn:toric-preposet}
  A \emph{toric preposet} is a set that arises as an open face of a toric
  poset $P=P(c)$ for at least one graph $G$.
\end{defn}

If $x$ lies on a toric face $F^{\tor}_\pi$ of $P$, where (without loss of generality) $\pi=\cl_P^{\tor}(\pi)$,
then the strongly connected components of the preposet $\bar\alpha_G(x)$ are $\pi=\{B_1,\dots B_r\}$. 

\begin{ex}\label{ex:K_3-part2}
  Let $G=K_3$, as in Example~\ref{ex:K_3}. The six acyclic
  orientations of $G$ fall into two toric equivalence classes. The
  three orientations shown in Figure~\ref{ex:K_3} comprise one
  class, and so the corresponding toric poset is $P=P(G,[\omega_i])$
  for any $i=1,2,3$. Equivalently, the closed toric chamber is a union of 
  order polytopes under the natural quotient map: 
  \[
  \overline{c(P)}=\bar{c}_{[\omega_i]}=\bigcup_{i=1}^3q(\O(P(G,\omega_i)))\,.
  \]
  This should be visually clear from Figure~\ref{ex:K_3}.
  The two chambers in $\AAA_{\tor}(G)$ are the three-dimensional faces
  of $P$. Each of the three toric hyperplanes in 
  $\AAA_{\tor}(G)=\{H_{12}^{\tor},H_{13}^{\tor},H_{23}^{\tor}\}$ are
  two-dimensional faces of $P$, and these (toric preposets) correspond
  to the following toric equivalence classes of size-2 preposets $\omega_\pi/\!\!\sim_\pi$ over
  $K_3$:
  \begin{center}
    \tikzstyle{to} = [draw,-stealth]
    \begin{tikzpicture}[shorten >= -2.5pt, shorten <= -2.5pt]
      \begin{scope}[shift={(10,0)}]
        \node (1a) at (0,0) {\scriptsize $\{1\}$};
        \node (23a) at (2,0) {\scriptsize $\{2,3\}\;,$};
        \draw[to] (1a) to (23a);
        \node (1b) at (0,-.75) {\scriptsize $\{1\}$};
        \node (23b) at (2,-.75) {\scriptsize $\{2,3\}$};
        \draw[to] (23b) to (1b);
        \node at (-.5,0) {$\Big\{$};
        \node at (2.5,-.75) {$\Big\}$};
      \end{scope}
      \begin{scope}[shift={(5,0)}]
        \node (2a) at (0,0) {\scriptsize $\{2\}$};
        \node (13a) at (2,0) {\scriptsize $\{1,3\}\;,$};
        \draw[to] (2a) to (13a);
        \node (2b) at (0,-.75) {\scriptsize $\{2\}$};
        \node (13b) at (2,-.75) {\scriptsize $\{1,3\}$};
        \draw[to] (13b) to (2b);
        \node at (-.5,0) {$\Big\{$};
        \node at (2.5,-.75) {$\Big\}$};
      \end{scope}
      \begin{scope}[shift={(0,0)}]
        \node (3a) at (0,0) {\scriptsize $\{3\}$};
        \node (12a) at (2,0) {\scriptsize $\{1,2\}\;,$};
        \draw[to] (3a) to (12a);
        \node (3b) at (0,-.75) {\scriptsize $\{3\}$};
        \node (12b) at (2,-.75) {\scriptsize $\{1,2\}$};
        \draw[to] (12b) to (3b);
        \node at (-.5,0) {$\Big\{$};
        \node at (2.5,-.75) {$\Big\}$};
      \end{scope}
    \end{tikzpicture}
  \end{center}
  The toric flat $D_{\{V\}}$ is the unique one-dimensional face of
  $P$, and this corresponds to the unique size-$1$ preposet over $K_3$; when
  $x_1=x_2=x_3$, which is trivially in its own toric equivalence
  class.
\end{ex}

\section{Toric intervals and antichains}\label{sec:toric-intervals}

Collapsing an interval or antichain of an ordinary poset defines a poset morphism. This remains true in the toric case, as will be shown in Section~\ref{sec:toric-morphisms}, though the toric analogues of these concepts are trickier to define. Toric antichains were introduced in~\cite{Develin:15}, but toric intervals are new to this paper. First, we need to review some terminology and results about toric total orders, chains, transitivity, and Hasse diagrams. This will also be needed to study toric order ideals and filters in Section~\ref{sec:toric-ideals}. Much of the content in Sections~\ref{subsec:toric-total-orders}--\ref{subsec:toric-hasse} can be found in~\cite{Develin:15}. Throughout, $G=(V,E)$ is a fixed undirected graph with $|V|=n$, and coordinates $x_i$ of points $x=(x_1,\dots,x_n)$ in a toric chamber $c(P)$ are assumed to be reduced modulo $1$, i.e., $x_i\in[0,1)$. 

\subsection{Toric total orders}\label{subsec:toric-total-orders}

A toric poset $P'$ is a \emph{total toric order} if $c(P')$ is a chamber of $\AAA_{\tor}(K_V)$. If $P(G,[\omega])$ is a total toric order, then $P(G,\omega')$ is a total order for each $\omega'\in[\omega]$, and thus $[\omega]$ has precisely $|V|$ elements. Since each $P(G,\omega')$ has exactly one linear extension, total toric orders are indexed by the $(n-1)!$ cyclic equivalence classes of permutations of $V$:
\[
[w]=[(w_1,\dots,w_n)]:=\big\{(w_1,\dots,w_n),(w_2,\dots,w_n,w_1),\dots,(w_n,w_1,\dots,w_{n-1})\big\}\,.
\]
Recall that if $P$ and $P'$ are toric posets over $G$, then $P'$ is an extension of $P$ if $c(P')\subseteq c(P)$. Moreover, $P'$ is a \emph{total toric extension} if $P'$ is a total toric order. Analogous to how a poset is determined by its linear extensions, a toric poset $P$ is determined by its set of total toric extensions, denoted $\LLL_{\tor}(P)$.
\begin{thm}{(\cite{Develin:15}, Proposition 1.7)}\label{thm:Prop1.7}
  Any toric poset $P$ is completely determined by its total toric extensions in the following sense:
  \[
  \overline{c(P)}=\bigcup_{P'\in\LLL_{\tor}(P)}\overline{c(P')}\,.
  \]
\end{thm}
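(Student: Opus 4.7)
The plan is to prove the two set inclusions separately. The inclusion $\bigcup_{P'\in\LLL_{\tor}(P)}\overline{c(P')}\subseteq\overline{c(P)}$ is immediate from the definition of extension: any $P'\in\LLL_{\tor}(P)$ satisfies $c(P')\subseteq c(P)$, hence $\overline{c(P')}\subseteq\overline{c(P)}$, and taking a union preserves the containment.

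For the reverse inclusion, write $P=P(G,[\omega])$ and apply Corollary~\ref{cor:order-polytope} to decompose
\[
\overline{c(P)}=\bigcup_{\omega'\in[\omega]}q(\O(P(G,\omega'))).
\]
For each fixed $\omega'\in[\omega]$, the classical triangulation of the order polytope by linear extensions (an immediate consequence of Eq.~\eqref{eq:unit-cube} applied to $K_V$ and then intersected with $\overline{c(P(G,\omega'))}$) expresses $\O(P(G,\omega'))$ as the union of $\O(P(K_V,\omega''))$ over all linear extensions $\omega''$ of $P(G,\omega')$. Pushing $q$ inside the union and reapplying Corollary~\ref{cor:order-polytope} to each total toric order $P(K_V,[\omega''])$ yields
\[
\overline{c(P)}\subseteq\bigcup_{\omega'\in[\omega]}\bigcup_{\omega''}\overline{c(P(K_V,[\omega'']))},
\]
where the inner union ranges over linear extensions $\omega''$ of $P(G,\omega')$. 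It then suffices to show that every $P(K_V,[\omega''])$ appearing here actually lies in $\LLL_{\tor}(P)$, i.e., $c(P(K_V,[\omega'']))\subseteq c(P(G,[\omega]))$.

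I expect this last step to be the main technical point. It amounts to showing that every $\omega'''\in[\omega'']$ restricts to an orientation $\omega'''|_E\in[\omega]$. The critical observation is that source-to-sink flips are compatible with edge restriction: if $v$ is a source (respectively sink) of an acyclic orientation of $K_V$, then every edge of $G$ incident to $v$ is also an edge of $K_V$ incident to $v$, so $v$ is a source (respectively sink, or isolated) of the restriction to $E$. Consequently, a single flip at $v$ in $K_V$ either does nothing or realizes a single flip at $v$ in $G$. Inducting on the flip-length of the equivalence $\omega''\equiv\omega'''$ gives $\omega'''|_E\equiv\omega''|_E$ in $\Acyc(G)$; since $\omega''|_E=\omega'\in[\omega]$ by choice of $\omega''$ as a linear extension of $P(G,\omega')$, we conclude $\omega'''|_E\in[\omega]$, completing the proof.
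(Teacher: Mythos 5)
The paper cites this result from \cite{Develin:15} (Proposition~1.7) without reproducing a proof, so there is no in-paper argument to compare against. Your proof is correct, and it is pleasingly self-contained modulo the machinery already established in the paper. The forward inclusion is immediate from the definition of extension. For the reverse, you correctly chain Corollary~\ref{cor:order-polytope} with the classical decomposition of $\O(P(G,\omega'))$ into the simplices $\O(P(K_V,\omega''))$ indexed by linear extensions $\omega''$ of $P(G,\omega')$, and then reapply Corollary~\ref{cor:order-polytope} to each $P(K_V,[\omega''])$. The crux is indeed showing that every such $P(K_V,[\omega''])$ lies in $\LLL_{\tor}(P)$, i.e., $c(P(K_V,[\omega'']))\subseteq c(P(G,[\omega]))$, and your flip-restriction argument is the right mechanism: since a source (or sink) $v$ of an acyclic orientation $\omega'''$ of $K_V$ remains a source (sink, or isolated vertex) of $\omega'''|_E$, every flip in $K_V$ restricts to a flip or the identity in $G$, so $\omega'''\equiv\omega''$ in $\Acyc(K_V)$ implies $\omega'''|_E\equiv\omega''|_E=\omega'\in[\omega]$ in $\Acyc(G)$. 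Combined with the fact that $\bar\alpha_G(x)=\bar\alpha_{K_V}(x)|_E$ for any $x$ avoiding $\AAA_{\tor}(K_V)\supseteq\AAA_{\tor}(G)$, Theorem~\ref{thm:bijection} then delivers the chamber containment. The one place worth being slightly more explicit is the triangulation step: intersecting Eq.~\eqref{eq:unit-cube} (for $K_V$) with $\O(P(G,\omega'))$ gives a union over \emph{all} of $\Acyc(K_V)$, and one should briefly justify why it suffices to keep only the linear extensions of $\omega'$ (e.g., by noting that any $x\in\O(P(G,\omega'))$ lies in $\O(P(K_V,\omega''))$ for a linear extension $\omega''$ chosen to refine both the weak order of the coordinates of $x$ and the partial order $\leq_{P(G,\omega')}$). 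This is standard, but spelling it out would tighten the argument.
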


\subsection{Toric directed paths, chains, and transitivity}\label{subsec:toric-chains}

A \emph{chain} in a poset $P(G,\omega)$ is a totally ordered subset $C\subseteq V$. Equivalently, this means that the elements in $C$ all lie on a common \emph{directed path} $i_1\to i_2\to\cdots\to i_m$ in $\omega$. Transitivity can be characterized in this language: if $i$ and $j$ lie on a common chain, then $i$ and $j$ are comparable in $P(G,\omega)$. Geometrically, $i$ and $j$ being comparable means the hyperplane $H_{i,j}$ does not cut (i.e., is disjoint from) the chamber $c(P(G,\omega))$. 

The toric analogue of a chain is ``essentially'' a totally cyclically ordered set, but care must be taken in the case when $|C|=2$ because every size-two subset  $C\subseteq V$ is trivially totally cyclically ordered. Define a \emph{toric directed path} in $\omega$, to be a directed path $i_1\to i_2\to\cdots\to i_m$ such that the edge $i_1\to i_m$ is also present. We denote such a path by $i_1\to_{\tor}i_m$. Toric directed paths of size $2$ are simply edges, and every singleton set is a toric directed path of size $1$. A fundamental property of toric directed paths is that up to cyclic shifts, they are invariants of toric-equivalence classes. That is, $i_1\to_{\tor}i_m$ is a toric directed path of $\omega$ if and only if each $\omega'\in[\omega]$ has a toric directed path $j_1\to_{\tor} j_m$, for some cyclic shift $(j_1,\dots,j_m)$ in $[(i_1,\dots,i_m)]$. This is Proposition~4.2 of \cite{Develin:15}, and it leads to the notion of a toric chain, which is a totally cyclically ordered subset.

\begin{defn}\label{defn:toric-chain}
  Let $P=P(G,[\omega])$ be a toric poset. A subset $C=\{i_1,\ldots,i_m\} \subseteq V$ is a \emph{toric chain} of $P$ if there exists a cyclic equivalence class $[(i_1,\ldots,i_m)]$ of linear orderings of $C$ with the following property:
  for every $x\in c(P)$ there exists some $(j_1,\dots,j_m)$ in $[(i_1,\ldots,i_m)]$ for which
  \[
  0\leq x_{j_1}<x_{j_2}<\cdots<x_{j_m}<1\,.
  \]
In this situation, we will say that $P|_C=[(i_1,\ldots,i_m)]$. 
\end{defn}

The following is a reformulation of Proposition~6.3 of \cite{Develin:15} using the language of this paper, where notation such as $P(G,\omega)$ and $P(G,[\omega])$ is new.

\begin{prop}\label{prop:toric-chains}
  Fix a toric poset $P=P(G,[\omega])$, and $C=\{i_1,\ldots,i_m\} \subseteq V$.
  The first three of the following four conditions are equivalent,
  and when $m=|C| \neq 2$, they are also equivalent to the fourth.
\begin{enumerate}[(a)]
\item $C$ is a toric chain in $P$, with $P|_C=[(i_1,\ldots,i_m)]$.
\item For every $\omega'\in[\omega]$, the set $C$ is a chain of $P(G,\omega')$, ordered in some cyclic shift of $(i_1,\dots,i_m)$.
\item For every $\omega'\in[\omega]$, the set $C$ occurs as a subsequence of a toric directed path in $\omega'$, in some cyclic shift of the order $(i_i,\dots,i_m)$.
\item Every total toric extension $[w]$ in $\LLL_{\tor}(P)$ has the
same restriction $[w|_C]=[(i_1,\ldots,i_m)]$.
\end{enumerate}
\end{prop}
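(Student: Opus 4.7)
The statement is a restatement of Proposition~6.3 of \cite{Develin:15} in the geometric language developed here, so the plan is to deduce each equivalence using Corollary~\ref{cor:order-polytope} and Theorem~\ref{thm:Prop1.7}. I would prove (a)$\Leftrightarrow$(b) and (b)$\Leftrightarrow$(c) to cover the first three conditions, and then treat (a)$\Leftrightarrow$(d) under the hypothesis $m\neq 2$ separately.

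For (a)$\Leftrightarrow$(b), Corollary~\ref{cor:order-polytope} writes $\overline{c(P)} = \bigcup_{\omega'\in[\omega]} q(\O(P(G,\omega')))$. The interior of $q(\O(P(G,\omega')))$ consists of points whose strict coordinate ordering records a linear extension of $P(G,\omega')$. Thus, if (a) holds with $P|_C=[(i_1,\dots,i_m)]$, choosing an interior point of each $q(\O(P(G,\omega')))$ identifies the cyclic shift $(j_1,\dots,j_m)$ observed there with a linear ordering of $C$ in $P(G,\omega')$, giving (b). Conversely, if (b) holds then every order polytope contributing to $\overline{c(P)}$ realizes $C$ as a chain in some cyclic shift of $(i_1,\dots,i_m)$, and the strict inequalities persist on interior points of $c(P)$, giving (a). The equivalence (b)$\Leftrightarrow$(c) is then essentially linguistic: (c)$\Rightarrow$(b) is immediate because a subsequence of a directed path in $\omega'$ consists of mutually comparable elements in $P(G,\omega')$; for (b)$\Rightarrow$(c), concatenating directed paths between consecutive $j_k$'s yields a directed path in $\omega'$ containing $C$ as a subsequence, and the wrap-around edge $j_1\to j_m$ required for a \emph{toric} directed path is produced using Proposition~4.2 of \cite{Develin:15}, which asserts that toric directed paths are preserved up to cyclic shift under source-to-sink flips.

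For (a)$\Leftrightarrow$(d) when $m\neq 2$, I would invoke Theorem~\ref{thm:Prop1.7}: $\overline{c(P)} = \bigcup_{P'\in\LLL_{\tor}(P)}\overline{c(P')}$. Since $c(P')\subseteq c(P)$ for each total toric extension $P'$, if $C$ is a toric chain of $P$ with class $[(i_1,\dots,i_m)]$, the same cyclic class is forced on $[w|_C]$ for every $[w]\in\LLL_{\tor}(P)$, which is (d). Conversely, given (d), any $x\in c(P)$ lies in some $\overline{c(P')}$, and the uniform restriction $[w|_C]=[(i_1,\dots,i_m)]$ forces the coordinates $(x_{i_1},\dots,x_{i_m})$ to be ordered in some representative of this cyclic class, yielding (a). The exclusion $m\neq 2$ is essential because for any pair $\{a,b\}$ the cyclic equivalence class $[(a,b)]=[(b,a)]$ is unique, so (d) is automatic while (a)--(c) still impose the nontrivial requirement that $H^{\tor}_{ab}\cap c(P)=\varnothing$.

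The main obstacle is the wrap-around edge in (b)$\Rightarrow$(c): the edge $j_1\to j_m$ is not generally present in $G$ and must be obtained by choosing a favorable representative of $[\omega]$ in which the flip structure makes the closing edge visible, and then propagating the resulting toric directed path to every $\omega'\in[\omega]$ via the flip-invariance supplied by Proposition~4.2 of \cite{Develin:15}.
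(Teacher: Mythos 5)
The paper does not actually supply a proof of this proposition: it is presented as a ``reformulation of Proposition~6.3 of [DMR15] using the language of this paper,'' so there is no in-paper argument to compare yours against. Your attempt is therefore a self-contained reconstruction, and it contains several real gaps.

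For (a)$\Rightarrow$(b), choosing a \emph{single} interior point of $q(\O(P(G,\omega')))$ records only one linear ordering of the $C$-coordinates; it does not show that $C$ is a chain of $P(G,\omega')$. The set of $C$-coordinate orderings realized over all interior points of $\O(P(G,\omega'))$ is exactly the set of linear extensions of the restricted poset $P(G,\omega')|_C$, and (a) forces all of these to be cyclic shifts of $(i_1,\dots,i_m)$. The missing step is the observation that, for $m \geq 3$, no two distinct cyclic shifts of $(i_1,\dots,i_m)$ differ by an adjacent transposition; since the linear-extension graph of any poset is connected by adjacent transpositions, this forces $P(G,\omega')|_C$ to have a unique linear extension, i.e.\ to be a chain. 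Without this (or an equivalent argument) the implication simply isn't established. The same kind of care is needed in (b)$\Rightarrow$(a) and (d)$\Rightarrow$(a), where you need strict inequalities among the $C$-coordinates of every $x\in c(P)$; since the lift of $x$ may land on the boundary of an order polytope, and since pairs in $C$ need not be edges of $G$, you cannot just say the coordinates ``are ordered in some representative of the cyclic class'' --- you must rule out coincidences, which again requires argument.

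The more serious gap is in (b)$\Rightarrow$(c). You correctly identify the obstacle --- the closing edge of the toric directed path must be an edge of $G$, not merely implied by toric transitivity --- but then wave it away by saying it ``is produced using Proposition~4.2 of [DMR15].'' That proposition only asserts that once a toric directed path exists in \emph{some} representative, cyclic shifts of it appear in all representatives; it does not produce a toric directed path from a mere chain. You have not shown that any representative $\omega'\in[\omega]$ actually contains a toric directed path having $C$ as a subsequence, so the argument is circular at exactly the point you flag as the main difficulty. As written the proof does not go through.

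Your treatment of (a)$\Leftrightarrow$(d) and the explanation of why $m=2$ must be excluded are essentially correct, modulo the strictness issue mentioned above.
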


For ordinary posets, all subsets of chains are chains. The same holds in the toric case.

\begin{prop}
  Subsets of toric chains are toric chains. 
\end{prop}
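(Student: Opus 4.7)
The plan is to work directly from Definition~\ref{defn:toric-chain}, inheriting the cyclic ordering on the subset from the cyclic ordering on the original chain. Write $P=P(G,[\omega])$ and suppose $C=\{i_1,\dots,i_m\}\subseteq V$ is a toric chain with $P|_C=[(i_1,\dots,i_m)]$. Let $C'\subseteq C$. The cases $|C'|\leq 1$ are trivial (singletons and the empty set are toric chains by convention/vacuously), so assume $|C'|=k\geq 2$ and write $C'=\{i_{a_1},\dots,i_{a_k}\}$ with $1\leq a_1<a_2<\cdots<a_k\leq m$.

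I claim $C'$ is a toric chain with $P|_{C'}=[(i_{a_1},\dots,i_{a_k})]$. Fix $x\in c(P)$. By Definition~\ref{defn:toric-chain} applied to $C$, there exists some cyclic shift $(j_1,\dots,j_m)$ of $(i_1,\dots,i_m)$ with
\[
0\leq x_{j_1}<x_{j_2}<\cdots<x_{j_m}<1.
\]
Restricting this strictly increasing sequence to the indices that lie in $C'$ yields a strictly increasing sequence of the form $x_{j_{b_1}}<\cdots<x_{j_{b_k}}$ whose underlying index sequence $(j_{b_1},\dots,j_{b_k})$ is, by inspection, a cyclic shift of $(i_{a_1},\dots,i_{a_k})$. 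Thus $C'$ satisfies Definition~\ref{defn:toric-chain} with the cyclic class $[(i_{a_1},\dots,i_{a_k})]$ independent of $x$, proving the claim.

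The only subtle point is the case $k=2$, where the cyclic class $[(i_{a_1},i_{a_2})]=\{(i_{a_1},i_{a_2}),(i_{a_2},i_{a_1})\}$ already contains both orderings and so being a toric chain reduces to the pair being separated by the toric hyperplane $H^{\tor}_{i_{a_1}i_{a_2}}$ on $c(P)$. But that is exactly what we have established: for every $x\in c(P)$, the coordinates $x_{i_{a_1}}$ and $x_{i_{a_2}}$ appear as two distinct entries of the strict chain inherited from $C$, so $x_{i_{a_1}}\neq x_{i_{a_2}}$ throughout $c(P)$.

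The main obstacle I anticipate is cosmetic rather than substantive: one needs to unfold the cyclic shift bookkeeping carefully enough to be convinced that the restricted sequence really is a cyclic shift of $(i_{a_1},\dots,i_{a_k})$ (and not some other reordering) as $x$ ranges over $c(P)$. Concretely, if the shift that works for $x$ is $(i_{t+1},\dots,i_m,i_1,\dots,i_t)$, then the restriction to $C'$ is $(i_{a_{s+1}},\dots,i_{a_k},i_{a_1},\dots,i_{a_s})$ where $s$ is the largest index with $a_s\leq t$; varying $t$ only varies $s$, which keeps the restriction within the single cyclic class $[(i_{a_1},\dots,i_{a_k})]$. This is the only place the argument requires actual verification, and it is a one-line observation once set up.
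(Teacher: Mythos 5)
Your proof is correct. The paper states this proposition without proof, treating it as elementary, and your argument is a natural unpacking of Definition~\ref{defn:toric-chain}: for each $x\in c(P)$, the strictly increasing (mod $1$) arrangement of the coordinates indexed by $C$ restricts to a strictly increasing arrangement of the coordinates indexed by $C'$, and the resulting ordered index sequence is a cyclic shift of a single fixed linear order of $C'$, independent of $x$. Your bookkeeping that the restriction of the cyclic shift $(i_{t+1},\dots,i_m,i_1,\dots,i_t)$ to $C'$ is $(i_{a_{s+1}},\dots,i_{a_k},i_{a_1},\dots,i_{a_s})$, a cyclic shift of $(i_{a_1},\dots,i_{a_k})$, is the one genuine thing to check, and you have done it correctly. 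Your handling of the $|C'|=2$ case is also right: strictness of the inherited inequalities forces $x_{i_{a_1}}\ne x_{i_{a_2}}$ throughout $c(P)$, which is all that a size-$2$ toric chain requires.

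One remark: a slightly shorter route is available via Proposition~\ref{prop:toric-chains}(b), which characterizes $C$ as a toric chain of $P(G,[\omega])$ precisely when $C$ is an ordinary chain of $P(G,\omega')$ (in a cyclically compatible order) for every $\omega'\in[\omega]$. Since subsets of ordinary chains are chains and restricting a cyclic order to a subset gives a well-defined cyclic order, the claim follows immediately from the ordinary-poset fact. That said, your direct argument from the definition is equally valid and has the advantage of not invoking the equivalences in Proposition~\ref{prop:toric-chains}.
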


Having the concept of a toric chain leads to the notion of \emph{toric transitivity}, which is completely analogous to ordinary transitivity when stated geometrically.

\begin{prop}\label{prop:toric-transitivity}
Let $i,j\in V$ be distinct. Then the hyperplane $H_{i,j}^{\tor}$ does not cut the chamber $c(P(G,[\omega]))$ if and only if $i$ and $j$ lie on a common toric chain. 
\end{prop}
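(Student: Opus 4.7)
The plan is to establish both directions by directly unpacking Definition~\ref{defn:toric-chain}; neither requires Proposition~\ref{prop:toric-chains} nor any lifting to the affine arrangement. The key observation is that for $m=2$, Definition~\ref{defn:toric-chain} is essentially a restatement of the non-cutting condition.

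For the $(\Leftarrow)$ direction, I would suppose $i$ and $j$ both belong to some toric chain $C=\{i_1,\dots,i_m\}$ of $P$, with associated cyclic equivalence class $[(i_1,\dots,i_m)]$. Given any $x\in c(P)$, the definition produces a cyclic shift $(j_1,\dots,j_m)$ satisfying $0\leq x_{j_1}<x_{j_2}<\cdots<x_{j_m}<1$. Since $i\neq j$ are both in $C$, they appear as two distinct entries $j_k,j_\ell$ in this strict chain, so $\modone{x_i}\neq\modone{x_j}$. Hence $x\notin H_{i,j}^{\tor}$; as $x\in c(P)$ was arbitrary, $c(P)\cap H_{i,j}^{\tor}=\varnothing$.

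For the $(\Rightarrow)$ direction, I would take the pair $C=\{i,j\}$ itself as the candidate toric chain. Its unique cyclic equivalence class of linear orderings is $[(i,j)]=\{(i,j),(j,i)\}$. If $H_{i,j}^{\tor}$ does not cut $c(P)$, then $\modone{x_i}\neq\modone{x_j}$ for every $x\in c(P)$, so for each such $x$ exactly one of the two orderings in $[(i,j)]$ satisfies the strict inequality $0\leq x_{j_1}<x_{j_2}<1$ demanded by Definition~\ref{defn:toric-chain}. Thus $\{i,j\}$ is a toric chain containing both $i$ and $j$.

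The main obstacle here is conceptual rather than technical: one must recognize that the $m=2$ instance of the toric-chain definition collapses to a condition on a single pair of coordinates modulo $1$, which coincides with the non-cutting statement. This is in sharp contrast to the $m\geq 3$ case of Proposition~\ref{prop:toric-chains}, where condition~(d) would fail at $m=2$ and where genuine global structure across all total toric extensions is needed. That stronger equivalence, fortunately, is not required for the present proposition, so both implications reduce to unpacking the definitions.
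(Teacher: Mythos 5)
The paper does not actually supply a proof of this proposition---it is stated immediately after the sentence describing toric transitivity as ``completely analogous to ordinary transitivity when stated geometrically,'' so the claim is treated as immediate from the definitions. Your proof is a correct and careful elaboration of exactly that. The key observation is sound: when $m=2$, the set $\{i,j\}$ has only one cyclic equivalence class $[(i,j)]=\{(i,j),(j,i)\}$, and the condition of Definition~\ref{defn:toric-chain} reduces precisely to ``$\modone{x_i}\neq\modone{x_j}$ for all $x\in c(P)$,'' which is the non-cutting condition $H^{\tor}_{i,j}\cap c(P)=\varnothing$; this gives $(\Rightarrow)$. For $(\Leftarrow)$ you correctly use that the defining inequalities of a toric chain are strict, so any two distinct entries $i$ and $j$ of a common toric chain satisfy $\modone{x_i}\neq\modone{x_j}$ at every $x\in c(P)$. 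Your remark that Proposition~\ref{prop:toric-chains}(d) is not needed---and indeed fails at $m=2$---matches the paper's own caveat about size-two sets being trivially cyclically ordered, and the observation that no lifting to the affine arrangement or quantification over $\omega'\in[\omega]$ is required is precisely why this proposition is lighter than the surrounding structure theorems.
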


\subsection{Toric Hasse diagrams}\label{subsec:toric-hasse}

One of the major drawbacks to studying toric posets combinatorially, as
equivalences of acyclic orientations (rather than geometrically, as
toric chambers), is that a toric poset $P$ or chamber $c=c(P)$
generally arises in multiple toric graphic arrangements
$\AAA_{\tor}(G)$ over the same vertex set. That is, one can have
$P(G,[\omega])=P(G',[\omega'])$ for different graphs, leading to
ambiguity in labeling a toric poset $P$ with a pair $(G,[\omega])$
consisting of a graph $G$ and equivalence class $[\omega]$ in $\Acyc(G)/\!\!\equiv$. 

Toric transitivity resolves this issue. As with ordinary posets, there is a well-defined notion for toric posets of what it means for an edge to be ``implied by transitivity.'' The \emph{toric Hasse diagram} is the graph $\hat{G}^{\torHasse}$ with all such edges removed. In Section~\ref{subsec:toric-hasse}, we encountered the \emph{toric transitive closure}, which is the graph $\bar{G}^{\tor}$ with all such edges included. In other words, given any toric poset $P=P(G,[\omega])$, there is always a unique minimal pair $(\hat{G}^{\torHasse}(P),[\omega^{\torHasse}(P)])$ and maximal pair
$(\bar{G}^{\tor}(P),[\bar{\omega}^{\tor}(P)])$ with the property that the set $c(P)$ is in $\Chambers\AAA_{\tor}(G)$ iff
  \[
  \hat{G}^{\torHasse}(P) \subseteq G  \subseteq \bar{G}^{\tor}(P)
  \]
  where $\subseteq$ is inclusion of edges. In this case, $\omega$ can be taken to be the restriction to $G$ of any orientation in $[\bar{\omega}^{\tor}(P)]$.

Geometrically, the existence of a unique toric Hasse diagram is intuitive; it corresponds to the minimal set of toric hyperplanes that bound the chamber $c(P)$, and the edges implied by transitivity correspond to the additional hyperplanes that do not cut $c(P)$. The technical combinatorial reason for the existence of a unique Hasse diagram (respectively, toric Hasse diagram) follows because the transitive closure (respectively, toric transitive closure) $A\longmapsto\bar{A}$ is a \emph{convex closure}, meaning it
satisfies the following anti-exchange condition;
also see~\cite{Edelman:85}:
\begin{equation*}
  \label{convex-closure-definition}
  \text{ for } a \neq b \text{ with }
  a,b \not\in \bar{A}\text{ and }
  a \in \overline{A \cup \{b\}},
  \text{ one has }b \notin \overline{A \cup \{a\}}\,.
\end{equation*}

Edges $\{i,j\}$ in the Hasse diagram (respectively, toric Hasse diagram) are precisely those whose removal ``change'' the poset (respectively, toric poset), and the geometric definitions make this precise. Though the ordinary and toric cases are analogous, there are a few subtle differences. For example, consider the following ``folk theorem.''

\begin{prop}\label{prop:facets}
  Let $P$ be a poset over $G=(V,E)$ and $\{i,j\}\in E$. Then the following are equivalent:
  \begin{enumerate}[(i)]
      \item The edge $\{i,j\}$ is in the Hasse diagram, $\hat{G}^{\Hasse}(P)$.
      \item Removing $H_{i,j}$ enlarges the chamber $c(P)$.
\item $\overline{c(P)}\cap H_{i,j}$ is a (closed) facet of $P$.
\item The interval $[i,j]$ is precisely $\{i,j\}$.
\item Removing $H_{i,j}$ from $\AAA(G)$ strictly increases the number of chambers.
\end{enumerate}
\end{prop}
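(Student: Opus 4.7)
The overall plan is to organize all five conditions around a single geometric observation: the edge $\{i,j\}$ (oriented $i\to j$, so that $i<_P j$) lies in $\hat{G}^{\Hasse}(P)$ precisely when the hyperplane $H_{i,j}$ supports a codimension-one face of $\overline{c(P)}$. Each of (i)--(v) should then be recognized as a different face of this same phenomenon, and I would close the loop via the chain (i)$\Leftrightarrow$(iv), (i)$\Leftrightarrow$(iii)$\Leftrightarrow$(ii), and (iii)$\Leftrightarrow$(v).

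The equivalence (i)$\Leftrightarrow$(iv) is immediate from the definition of the Hasse diagram as the transitive reduction: the covering relations of $P$ are precisely the pairs $\{i,j\}$ with $[i,j]=\{i,j\}$. For (i)$\Rightarrow$(iii), if $\{i,j\}\in\hat{G}^{\Hasse}(P)$ with $i<_P j$, then the inequality $x_i\leq x_j$ is not implied by the other defining inequalities of $\overline{c(P)}$, so one can produce an $x\in\overline{c(P)}$ with $x_i=x_j$ but $x_{i'}<x_{j'}$ for every covering pair $i'<_P j'$ other than $i<_P j$; such $x$ lies in the relative interior of $H_{i,j}\cap\overline{c(P)}$, making that set a facet. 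Conversely, if $\{i,j\}\notin\hat{G}^{\Hasse}(P)$, a chain $i=i_0<_P i_1<_P\cdots<_P i_m=j$ with $m\geq 2$ forces every $x\in\overline{c(P)}\cap H_{i,j}$ to satisfy $x_{i_0}=x_{i_1}=\cdots=x_{i_m}$, collapsing $H_{i,j}\cap\overline{c(P)}$ into a face of codimension at least $2$.

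For (iii)$\Leftrightarrow$(ii), if $H_{i,j}\cap\overline{c(P)}$ is a facet, pick a point $x$ in its relative interior lying on no other hyperplane of $\AAA(G)$; the two perturbations $x\pm\epsilon(e_i-e_j)$ lie in $c(P)$ and an adjacent chamber $c(P')$, so removing $H_{i,j}$ merges the two into a strictly larger region. Conversely, if $c(P)$ strictly enlarges after removing $H_{i,j}$, then a straight-line path from any point of $c(P)$ to any newly reachable point crosses $H_{i,j}$ transversely, exhibiting a facet of $\overline{c(P)}$ supported on $H_{i,j}$.

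The final step (iii)$\Leftrightarrow$(v) is the main obstacle, partly because as stated, ``strictly increases'' cannot be correct---removing a hyperplane from an arrangement can only decrease the total number of chambers. I expect the intended reading is ``strictly decreases,'' with the understanding that the merging specifically involves the chamber $c(P)$. Without this local interpretation the condition would be vacuous for graphic arrangements, since a generic point of $H_{i,j}\in\AAA(G)$ lies off every other hyperplane and its two-sided perturbations witness that $H_{i,j}$ bounds \emph{some} pair of adjacent chambers; thus removing \emph{any} edge of $G$ strictly decreases the global chamber count. Interpreted locally, (v) reduces to the facet characterization already used for (iii)$\Leftrightarrow$(ii), closing the loop.
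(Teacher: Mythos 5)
The paper states Proposition~\ref{prop:facets} as a ``folk theorem'' and provides no proof, so there is nothing in the paper to compare against; your blind proof is the only argument on the table.

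Your treatment of (i)$\Leftrightarrow$(iv), (i)$\Leftrightarrow$(iii) and (ii)$\Leftrightarrow$(iii) is correct and is the standard geometry of graphic chambers. For (i)$\Rightarrow$(iii) you should make explicit how you ``produce'' the required $x$: take any linear extension $\ell$ of the (acyclic) contraction of $P$ by the partition $\pi=\{\{i,j\},\text{singletons}\}$ and set $x_k=\ell(\text{block of }k)$. Then $x_i=x_j$ while $x_a<x_b$ for \emph{every} comparable pair $a<_P b$ with $\{a,b\}\neq\{i,j\}$ (not merely covering pairs). Since $P$ is a poset over $G$, every edge of $G$ joins comparable elements, so $x$ lies on $H_{i,j}$ and on no other hyperplane of $\AAA(G)$, exhibiting the facet. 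This closes the small gap in your assertion. The converse direction and the arguments for (ii)$\Leftrightarrow$(iii) are sound as written.

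Your diagnosis of (v) is correct and worth flagging directly: as stated (``strictly increases''), (v) can never hold, because deleting a hyperplane from any arrangement can only merge chambers. Replacing ``increases'' by ``decreases'' does not repair it either: by deletion--contraction for acyclic orientations, $|\Chambers\AAA(G)|=a(G)=a(G\setmin e)+a(G/e)$ and $a(G/e)\geq 1$ since $G$ is simple, so deleting \emph{any} edge of $G$ strictly lowers the chamber count. Thus the ``decreases'' reading of (v) holds for every $\{i,j\}\in E$, including non-Hasse edges (e.g., $G=K_3$, $P:1<2<3$, $\{i,j\}=\{1,3\}$), and cannot be equivalent to (i)--(iv). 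The only reading under which (v) survives is the local one you propose---that removing $H_{i,j}$ enlarges $c(P)$ specifically---but then (v) is literally condition (ii). The paper's remark about trees and $\AAA_{\tor}(G)$ does tacitly rely on this local/``decreases'' reading, which confirms that the printed condition (v) is misstated. Your proof, with the explicit construction of $x$ added, is as complete as the proposition admits in its current form.
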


Since toric posets are defined geometrically as subsets of $\R^V/\Z^V$ that are chambers of a graphic hyperplane arrangement, the equivalence (i)$\Leftrightarrow$(ii) is immediate for toric posets. Condition~(iii) says that the edges $\{i,j\}$ of the Hasse diagram are precisely the size-2 intervals, and Condition~(iv) says these are the closed face partitions having two blocks of the form $\pi=\{\{i,j\},V\setmin\{i,j\}\}$. Finally, note that the implication (i)$\Rightarrow$(v) in Proposition~\ref{prop:facets} trivially fails for toric posets. For a simple counterexample, take any tree $G$ with at least one edge. Since $\AAA_{\tor}(G)$ has only one chamber, removing any $H_{i,j}^{\tor}$ will never increase the number of chambers. 

Since adding or removing edges implied by toric transitivity does not
change the toric poset, it does not change which sets are toric
chains. Thus, to characterize the toric chains of $P(G,[\omega])$, it
suffices to characterize the toric chains of
$P(\bar{G}^{\tor}(P),[\bar{\omega}^{\tor}(P)])$. The following is immediate.

\begin{rem}
  Let $P$ be a toric poset. A size-$2$ subset $C=\{i,j\}$ of $V$ is a
  toric chain of $P$ if and only if $\{i,j\}$ is an edge of
  $\bar{G}^{\tor}(P)$. In particular, if $C$ is a maximal toric chain,
  then $\{i,j\}$ is an edge in $\hat{G}^{\torHasse}(P)$.
\end{rem}

\subsection{Toric intervals}\label{subsec:toric-intervals}

To motivate the definition of a toric interval, it helps to first interpret the classical definition in several different ways.

\begin{defn}\label{defn:interval}
    Let $i$ and $j$ be elements of a poset $P=P(G,\omega)$. The \emph{interval} $[i,j]$ is the set $I$ characterized by one of the following equivalent conditions:
    \begin{enumerate}[(i)]
      \item $I=\{k\in V: x_i\leq x_k\leq x_j,\;\text{for all }x\in c(P)\}$;
      \item $I=\{k\in V: k \text{ appears between $i$ and $j$ (inclusive) in any linear extension of $P$}\}$;
      \item $I=\{k\in V: k \text{ lies on a directed path from $i$ to $j$ in $\omega$}\}$.
    \end{enumerate}
Note that if $i=j$, then $[i,j]=\{i\}$, and if $i\nleq j$, then $[i,j]=\varnothing$.
\end{defn}

We will define toric intervals geometrically, motivated by Condition~(i), and show how it is equivalent to the toric version of Condition~(ii). In contrast, Condition (iii) has a small wrinkle -- the property of lying on a directed path from $i$ to $j$ does not depend on the choice of $(G,\omega)$ for $P$. Specifically, if $P(G,\omega)=P(G',\omega')$ and $k$ lies on an $\omega$-directed path from $i$ to $j$, then $k$ lies on an $\omega'$-directed path from $i$ to $j$. This is not the case for toric directed paths in toric posets, as the following example illustrates. As a result, we will formulate and prove a modified version of Condition~(iii) for toric intervals.

\begin{ex}
  Consider the circular graph $G=C_4$, and $\omega\in\Acyc(C_4)$ as shown at left in Figure~\ref{fig:toric-directed-path}. Let $P=P(C_4,[\omega])$, which is a total toric order. Therefore, the toric transitive closure of $(C_4,[\omega])$ is the pair $(K_4,[\omega'])$, where $\omega'$ is shown in Figure~\ref{fig:toric-directed-path} on the right. Therefore, $P(C_4,[\omega])=P(K_4,[\omega'])$. 

  \begin{figure}\centering
    \tikzstyle{to} = [draw,-stealth]
    \tikzstyle{To} = [draw,-stealth,dashed]
    \begin{tikzpicture}[scale=.5]
      \node (4) at (0,6) {$\mathbf{4}$};
      \node (3) at (2,4) {$\mathbf{3}$};
      \node (2) at (2,2) {$\mathbf{2}$};
      \node (1) at (0,0) {$\mathbf{1}$};
      \draw[to] (1) to (2);
      \draw[to] (1) to (4);
      \draw[to] (2) to (3);
      \draw[to] (3) to (4);
      \node at (-3,3) {$\omega\in\Acyc(C_4)$};
    \begin{scope}[shift={(12,0)}]
      \node (4) at (0,6) {$\mathbf{4}$};
      \node (3) at (2,4) {$\mathbf{3}$};
      \node (2) at (2,2) {$\mathbf{2}$};
      \node (1) at (0,0) {$\mathbf{1}$};
      \draw[to] (1) to (2);
      \draw[to] (1) to (3);
      \draw[to] (1) to (4);
      \draw[to] (2) to (3);
      \draw[to] (2) to (4);
      \draw[to] (3) to (4);
      \node at (-3,3) {$\omega'\in\Acyc(K_4)$};
    \end{scope}
    \end{tikzpicture}
    \caption{Despite the equality $P(C_4,[\omega])=P(K_4,[\omega'])$, the set $I=\{1,3\}$ lies on a toric directed path $1\to_{\tor}3$ in $\omega'$ but not for any representative of $[\omega]$.}
    \label{fig:toric-directed-path}
  \end{figure}
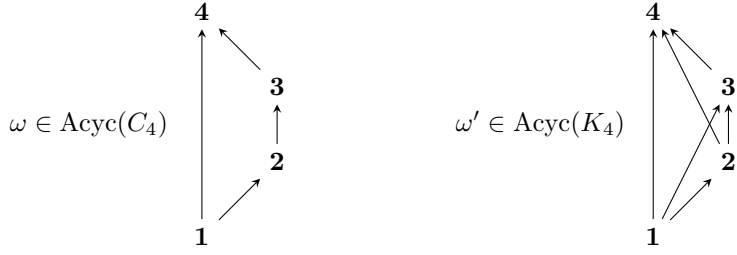

Now, let $i=1$ and $j=3$. The set $\{i,j\}$ lies on a toric directed path from $1$ to $3$ in $\omega'$ (which also contains $2$). However, none of the $4$ representatives in $[\omega]$ contain a toric directed path from $1$ to $3$.
\end{ex}

Another obstacle to formulating the correct toric analogue of an interval is how to characterize which size-$2$ subsets should be toric intervals. This ambiguity arises from the aforementioned ``size-$2$ chain problem'' of all size-$2$ subsets being totally cyclically ordered. Since ordinary intervals are unions of chains, we will require this to be a feature of toric intervals. 

\begin{defn}
    Let $i$ and $j$ be elements of a toric poset $P=P(G,[\omega])$. The \emph{toric interval} $I=[i,j]^{\tor}$ is the empty set if $i,j$ do not lie on a common toric chain, and otherwise is the set
    \begin{equation}\label{eq:toric-interval1}
      I=[i,j]^{\tor}:=\{i,j\}\cup\{k\in V: P|_{\{i,j,k\}}=[(i,k,j)]\}\,.
    \end{equation}
If there is no $k\not\in\{i,j\}$ satisfying Eq.~\eqref{eq:toric-interval1}, then $[i,j]^{\tor}=\{i,j\}$. If $i=j$, then $[i,j]^{\tor}=\{i\}$.
\end{defn}

\begin{rem}\label{rem:toric-interval}
  If $i,j,k$ are distinct elements of the toric interval $I=[i,j]^{\tor}$ of $P=P(G,[\omega])$, then for each $x$ in $c(P)$, exactly one of the following must hold:
\begin{equation}\label{eq:toric-interval2}
0\leq x_i<x_k<x_j<1\,,\qquad 0\leq x_j<x_i<x_k<1\,,\qquad 0\leq x_k<x_j<x_i<1\,.
\end{equation}
\end{rem}

By Theorem~\ref{thm:Prop1.7}, we can rephrase Remark~\ref{rem:toric-interval} as the toric analogue of Definition~\ref{defn:interval}(ii): the toric interval $[i,j]^{\tor}$ in $P$ is the set of elements between $i$ and $j$ in the cyclic order of any total toric extension of $P$.

\begin{cor}
  Suppose $I=[i,j]^{\tor}$ is a toric interval of $P=P(G,[\omega])$ of size $|I|\geq 3$. Then
  \[
  [i,j]^{\tor}=\{i,j\}\cup\{k: [w|_{\{i,j,k\}}]=[(i,k,j)],\;\text{ for all }[w]\in\LLL_{\tor}(P)\}\,.
  \]
\end{cor}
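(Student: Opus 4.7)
The plan is to deduce this corollary essentially as an immediate application of Proposition~\ref{prop:toric-chains}, specifically the equivalence (a)$\Leftrightarrow$(d). That equivalence is available here because the relevant restrictions are to the three-element set $\{i,j,k\}$, so the ``size-$2$ chain problem'' caveat does not apply. The hypothesis $|I|\geq 3$ is not technically needed for the equality itself (the statement is consistent when $I=\{i,j\}$, just vacuously about $k$'s), but it guarantees that $i$ and $j$ lie on a common toric chain, so the situation is non-degenerate and there exists at least one $k\notin\{i,j\}$ to test.

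First I would establish the containment $[i,j]^{\tor}\subseteq\{i,j\}\cup\{k:[w|_{\{i,j,k\}}]=[(i,k,j)]\text{ for all }[w]\in\LLL_{\tor}(P)\}$. The elements $i$ and $j$ are trivially in both sides. For $k\in[i,j]^{\tor}\setminus\{i,j\}$, the definition of toric interval yields $P|_{\{i,j,k\}}=[(i,k,j)]$, which by Definition~\ref{defn:toric-chain} means $\{i,j,k\}$ is a toric chain of $P$ with this specified cyclic order. Applying Proposition~\ref{prop:toric-chains}(a)$\Rightarrow$(d) to this 3-element toric chain then immediately gives that every total toric extension $[w]\in\LLL_{\tor}(P)$ restricts to $[w|_{\{i,j,k\}}]=[(i,k,j)]$.

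For the reverse containment, let $k\neq i,j$ satisfy $[w|_{\{i,j,k\}}]=[(i,k,j)]$ for every $[w]\in\LLL_{\tor}(P)$. Since $|\{i,j,k\}|=3$, Proposition~\ref{prop:toric-chains} gives the equivalence (d)$\Rightarrow$(a), from which one concludes that $\{i,j,k\}$ is a toric chain of $P$ with $P|_{\{i,j,k\}}=[(i,k,j)]$. By the defining formula \eqref{eq:toric-interval1}, this means $k\in[i,j]^{\tor}$.

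There is no serious obstacle: the whole argument is a packaging of Proposition~\ref{prop:toric-chains} through the definition of $[i,j]^{\tor}$. The only mildly delicate point is the verification that $i$ and $j$ lie on a common toric chain in the first place, so that $[i,j]^{\tor}$ is genuinely the union $\{i,j\}\cup\{k:\dots\}$ rather than the empty set; this is where $|I|\geq 3$ is used, as any $k\in I\setminus\{i,j\}$ certifies that $\{i,j\}\subseteq\{i,j,k\}$ is a toric chain (subsets of toric chains being toric chains, by the proposition stated just after Proposition~\ref{prop:toric-chains}).
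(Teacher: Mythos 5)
Your proposal is correct. The argument is essentially the same as the paper's intended one, just packaged differently: the paper obtains the corollary by combining Remark~\ref{rem:toric-interval} (which unpacks the definition of toric interval in terms of cyclic orderings of the coordinates $x_i, x_k, x_j$ over $c(P)$) with Theorem~\ref{thm:Prop1.7} (which identifies $\overline{c(P)}$ with the union of closures of total toric extensions), whereas you invoke the pre-packaged equivalence (a)$\Leftrightarrow$(d) of Proposition~\ref{prop:toric-chains}, valid since $|\{i,j,k\}|=3\neq 2$. These two routes encode the same underlying geometric fact, so the content is the same; your version is slightly tighter. One small redundancy: in your final paragraph you do not need to produce a $k\in I\setminus\{i,j\}$ to certify that $i,j$ lie on a common toric chain — the hypothesis $|I|\geq 3$ already forces $I\neq\varnothing$, and by the definition of $[i,j]^{\tor}$ this alone guarantees $i,j$ lie on a common toric chain (had they not, $I$ would be $\varnothing$).
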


Finally, the toric analogue of Definition~\ref{defn:interval}(iii) can be obtained by first passing to the toric transitive closure.

\begin{prop}\label{prop:toric-interval-iii}
  Fix a toric poset $P=P(G,[\omega])$. An element $k$ is in $[i,j]^{\tor}$ if and only if $k$ lies on a toric directed path $i\to_{\tor} j$ in $\bar{\omega}'$, for some $\bar\omega'\in[\bar{\omega}^{\tor}(P)]$.
\end{prop}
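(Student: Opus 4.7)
The plan is to reduce both directions of the equivalence to Proposition~\ref{prop:toric-chains} applied to the size-3 set $\{i,j,k\}$, combined with Proposition~4.2 of \cite{Develin:15}, which states that toric directed paths are preserved (up to cyclic shift) under toric equivalence. The unifying observation is that $k \in [i,j]^{\tor}$ (for $k \notin \{i,j\}$) is by definition the statement $P|_{\{i,j,k\}} = [(i,k,j)]$, i.e., that $\{i,j,k\}$ is a toric chain with cyclic order $(i,k,j)$, while $k$ lying on a toric directed path $i \to_{\tor} j$ exhibits $\{i,j,k\}$ as a subsequence of such a path in the same cyclic order. Since $|\{i,j,k\}|=3\neq 2$, the ``size-2 chain problem'' is avoided, and the equivalence between parts (a) and (c) of Proposition~\ref{prop:toric-chains} is available. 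The degenerate cases are trivial: if $i=j$ then $[i,j]^{\tor}=\{i\}$; if $|[i,j]^{\tor}|=2$ then $\{i,j\}\in\bar{G}^{\tor}(P)$ by Proposition~\ref{prop:toric-transitivity}, and the edge $i\to j$ itself serves as a toric directed path containing $k\in\{i,j\}$.

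For the forward direction, suppose $k\in[i,j]^{\tor}$ with $k \notin\{i,j\}$. Then $\{i,j,k\}$ is a toric chain of $P$, so by Proposition~\ref{prop:toric-transitivity} the three edges $\{i,k\},\{k,j\},\{i,j\}$ all lie in $\bar{G}^{\tor}(P)$. I only need to produce \emph{one} representative $\bar\omega' \in [\bar\omega^{\tor}(P)]$ in which these three edges are oriented $i\to k$, $k\to j$, $i\to j$, for then $i\to k\to j$ is a directed path and the shortcut edge $i\to j$ is present, making it a toric directed path through $k$. To find such a representative, pick any total toric extension $[w] \in \LLL_{\tor}(P)$ (which exists by Theorem~\ref{thm:Prop1.7}); by the corollary preceding the proposition, $[w|_{\{i,j,k\}}] = [(i,k,j)]$. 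Cyclically shifting $w$ to begin at the letter $i$ gives a linear order $w'$ with $i<_{w'} k<_{w'} j$, and restricting the associated orientation of $K_V$ to $\bar{G}^{\tor}(P)$ produces the required $\bar\omega'$.

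For the reverse direction, suppose some $\bar\omega' \in [\bar\omega^{\tor}(P)]$ contains a toric directed path $i=i_0\to i_1\to\cdots\to i_m=j$ with $k=i_\ell$ for some $0<\ell<m$. Then $\{i,k,j\}$ appears as a subsequence of this path in the linear order $(i,k,j)$. By Proposition~4.2 of \cite{Develin:15}, every $\omega'' \in [\bar\omega^{\tor}(P)]$ contains a toric directed path whose vertex sequence is a cyclic shift of $(i_0,\dots,i_m)$; hence $\{i,k,j\}$ appears as a subsequence of a toric directed path in $\omega''$ in the corresponding cyclic shift of $(i,k,j)$. Proposition~\ref{prop:toric-chains}(c)$\Rightarrow$(a), applied to $P=P(\bar{G}^{\tor}(P),[\bar\omega^{\tor}(P)])$, then gives $P|_{\{i,j,k\}}=[(i,k,j)]$, whence $k\in[i,j]^{\tor}$.

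The only nontrivial subtlety, which I expect to be the main point requiring care, is the cyclic-shift argument in the forward direction: one must verify that cyclically shifting a total toric extension so that $i$ comes first really does force the restriction to $\{i,j,k\}$ to be the linear order $(i,k,j)$ and not some other shift. This is a one-line observation from the cyclic structure of $[w|_{\{i,j,k\}}]=[(i,k,j)]$, since among the three elements of $\{i,j,k\}$, starting the word at $i$ forces the next element encountered to be $k$ and then $j$. Everything else is a direct application of the results on toric chains, toric transitivity, and the preservation of toric directed paths under toric equivalence.
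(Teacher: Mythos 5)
Your proof is correct and follows essentially the same route as the paper's: reduce both directions to Proposition~\ref{prop:toric-chains} applied to the size-$3$ set $C=\{i,j,k\}$, pick a representative in which $i$ comes first, and use toric transitivity to supply the shortcut edges. The only cosmetic difference is that in the forward direction the paper works with some $\omega'\in[\omega]$ having $i$ as a source and invokes Proposition~\ref{prop:toric-chains}(a)$\Rightarrow$(c), whereas you instead route through a total toric extension $[w]$ (condition (d)), cyclically shift it so $i$ is first, and then restrict the induced orientation of $K_V$ to $\bar{G}^{\tor}(P)$; these amount to the same choice, since the acyclic orientation of $\bar{G}^{\tor}(P)$ read off from the shifted $w$ is exactly a representative of $[\bar\omega^{\tor}(P)]$ with $i$ as a source. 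Your reverse direction makes explicit the use of Proposition~4.2 of \cite{Develin:15} (toric directed paths are preserved up to cyclic shift) that the paper leaves implicit when it asserts directly that $C$ is a toric chain; this is a useful clarification rather than a divergence.
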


\begin{proof}
  Throughout, let $C=\{i,j,k\}$. Assume that $|C|\geq 3$; the result is trivial otherwise. Suppose $k$ is in $[i,j]^{\tor}$, which means that $P|_C=[(i,k,j)]$. Take any $\omega'\in[\omega]$ for which $i$ is a source. By Proposition~\ref{prop:toric-chains}, the elements of $C$ occur as a subsequence of a toric directed path in $\omega'$, ordered $(i,k,j)$. Since this is a toric chain, the edges $\{i,k\}$, $\{k,j\}$, and $\{i,j\}$ are all implied by toric transitivity. Thus, $k$ lies on a toric directed path $i\to_{\tor}j$ in $\bar{\omega}'$, the unique orientation of $[\bar{\omega}^{\tor}(P)]$ whose restriction to $G$ is $\omega'$.

Conversely, suppose that $k$ lies on a toric directed path $i\to_{\tor} j$ in $\bar\omega'$, for some $\bar\omega'\in[\bar\omega^{\tor}(P)]$. Then $C$ is a toric chain, ordered $P|_C=[(i,k,j)]$, hence $k$ is in $[i,j]^{\tor}$.
\end{proof}

\begin{prop}\label{prop:toric-intervals}
  Let $P=P(G,[\omega])$ be a toric poset. If a set $I\subseteq V$ is a toric interval $I=[i,j]^{\tor}$, then there is some $\omega'\in[\omega]$ for which the set $I$ is the interval $[i,j]$ of $P(G,\omega')$. The converse need not hold.
\end{prop}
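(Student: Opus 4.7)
The plan is to split into cases depending on whether $[i,j]^{\tor}$ is empty, and in the nontrivial case to construct $\omega'$ via a total toric extension of $P$. The case $i=j$ is immediate, so assume $i\neq j$. When $[i,j]^{\tor}=\varnothing$, by definition $i,j$ do not lie on a common toric chain, so in particular $\{i,j\}$ itself is not a toric chain. The $|C|=2$ case of Proposition~\ref{prop:toric-chains}(a)$\Leftrightarrow$(b) then supplies some $\omega'\in[\omega]$ in which $i,j$ are incomparable in $P(G,\omega')$, giving $[i,j]=\varnothing=[i,j]^{\tor}$.

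In the main case $[i,j]^{\tor}\neq\varnothing$, I would take any total toric extension $P'=P(K_V,[w])\in\LLL_{\tor}(P)$ (which exists by Theorem~\ref{thm:Prop1.7}) and, after a cyclic shift, write $[w]=[(i,a_1,\ldots,a_{|V|-1})]$. By the reformulation of Remark~\ref{rem:toric-interval} via Theorem~\ref{thm:Prop1.7}, the set $[i,j]^{\tor}$ equals the elements lying between $i$ and $j$ in this cyclic order, so if $n:=|[i,j]^{\tor}|$ and $j=a_{n-1}$ then $[i,j]^{\tor}=\{i,a_1,\ldots,a_{n-1}\}$. Let $\omega^*\in\Acyc(K_V)$ be the linear order $i<a_1<\cdots<a_{|V|-1}$, set $\omega':=\omega^*|_G$, and note that $i$ is a source of $\omega'$. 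To verify $\omega'\in[\omega]$, pick a generic $x$ in the interior of $\O(P(K_V,\omega^*))$: then $q(x)\in c(P')\subseteq c(P)$, and a direct edge-by-edge computation gives $\bar{\alpha}_G(q(x))=\omega^*|_G=\omega'$, so Theorem~\ref{thm:bijection} places $\omega'\in[\omega]$.

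I would then prove $[i,j]_{\omega'}=[i,j]^{\tor}$ by two containments. For $[i,j]^{\tor}\subseteq[i,j]_{\omega'}$: any $k\in[i,j]^{\tor}\setminus\{i,j\}$ makes $\{i,k,j\}$ a toric chain with $P|_{\{i,k,j\}}=[(i,k,j)]$, so Proposition~\ref{prop:toric-chains}(a)$\Leftrightarrow$(b) makes it a chain in $P(G,\omega')$ in some cyclic shift of $(i,k,j)$; since $i$ is a source of $\omega'$, the shifts $(k,j,i)$ and $(j,i,k)$ are ruled out, leaving $i<_{\omega'}k<_{\omega'}j$ and hence $k\in[i,j]_{\omega'}$. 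For the reverse containment, since $\omega'=\omega^*|_G$ every $\omega'$-directed path is also an $\omega^*$-directed path, so $k\in[i,j]_{\omega'}$ yields $i\leq_{\omega^*}k\leq_{\omega^*}j$, placing $k$ in the initial segment $\{i,a_1,\ldots,a_{n-1}\}=[i,j]^{\tor}$.

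For the ``converse need not hold'' claim I would exhibit the path $G$ on $\{i,k,j\}$ with edges $\{i,k\},\{k,j\}$ and $\omega'\colon i\to k\to j$: here $[i,j]_{\omega'}=\{i,k,j\}$ but $[i,j]^{\tor}=\varnothing$, since flipping the source $i$ (yielding the equivalent $k\to i$, $k\to j$) leaves $i,j$ incomparable and shows $\{i,j\}$ is not a toric chain. The main obstacle throughout is that a naive construction such as ``take any $\omega'\in[\omega]$ with $i$ a source'' can give an ordinary interval strictly larger than $[i,j]^{\tor}$: some $k$ may be comparable to both $i$ and $j$ in that particular representative without $\{i,k,j\}$ being a toric chain. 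The total toric extension pins down a compatible linear order $\omega^*$, and the reverse containment $[i,j]_{\omega'}\subseteq[i,j]^{\tor}$ is precisely where this extra rigidity is essential.
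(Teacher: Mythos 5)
Your overall outline---split into the empty and nonempty cases, handle the empty case via Proposition~\ref{prop:toric-chains}, construct $\omega'$, and show two containments---is reasonable, and your empty-case argument, forward containment, and counterexample to the converse are all correct (the counterexample is the paper's own). But the reverse containment $[i,j]_{\omega'}\subseteq[i,j]^{\tor}$ has a genuine gap. You assert that $[i,j]^{\tor}$ equals the cyclic segment $\{i,a_1,\dots,a_{n-1}\}$ between $i$ and $j$ in the single chosen total toric extension $[w]$. This is false. The corollary following Remark~\ref{rem:toric-interval} characterizes $[i,j]^{\tor}$ as those $k$ lying cyclically between $i$ and $j$ in \emph{every} total toric extension, so one extension gives only the containment $[i,j]^{\tor}\subseteq\{i,a_1,\dots,a_{m-1}\}$, not equality. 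For a concrete counterexample, let $V=\{1,2,3\}$ and let $G$ have the single edge $\{1,3\}$; then $[1,3]^{\tor}=\{1,3\}$, but $P$ has the total toric extension $[(1,2,3)]$, in which the cyclic segment from $1$ to $3$ is $\{1,2,3\}$. (The paper's prose immediately after Remark~\ref{rem:toric-interval}, referring to ``any total toric extension,'' is itself loosely worded, and you cannot take it at face value; the precisely stated corollary carries the ``for all.'')

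The step where you lose the needed information is the passage from ``$k$ lies on an $\omega'$-directed path from $i$ to $j$'' down to the weaker ``$i\leq_{\omega^*}k\leq_{\omega^*}j$.'' That discards exactly the path connectivity in $G$ that forces $\{i,k,j\}$ to be a toric chain. The paper keeps it: after reducing WLOG to $G=\bar G^{\tor}(P)$, the hypothesis that $[i,j]^{\tor}\neq\varnothing$ means $\{i,j\}$ is a toric chain, hence an edge, oriented $i\to j$ since $i$ is a source; then any $\omega'$-directed path $i\to\cdots\to k\to\cdots\to j$ together with the chord $i\to j$ is a toric directed path, and Proposition~\ref{prop:toric-interval-iii} gives $k\in[i,j]^{\tor}$. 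This same observation also dissolves your stated worry about the ``naive'' choice ``take any $\omega'\in[\omega]$ with $i$ a source'': once $G=\bar G^{\tor}(P)$ (or once one passes to $\bar\omega'$ on $\bar G^{\tor}(P)$, which changes neither the poset nor its intervals), every $k$ with $i<_{\omega'}k<_{\omega'}j$ necessarily lies on the toric chain $\{i,k,j\}$, so the scenario you were guarding against does not occur. In short, the extra machinery of choosing $\omega'$ via a total toric extension is unnecessary, and the argument it supports is the one place the proof breaks.
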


\begin{proof}
  Without loss of generality, assume that $G=\bar{G}^{\tor}(P)$. The statement is trivial if $|I|<2$. We need to consider the cases $|I|=2$ and $|I|\geq 3$ separately. In both cases, we will show that one can take $\omega'$ to be any orientation that has $i$ as a source. 
  
  First, suppose $|I|=2$, which means that  $I=\{i,j\}$ is an edge of $G$. Take any $\omega'\in[\omega]$ for which $i$ is a source. Since $|I|=2$, there is no other $k\not\in\{i,j\}$ on a directed path from $i$ to $j$ in $\omega'$, as this would form a toric directed path. Therefore, the interval $[i,j]$ in $P(G,\omega')$ is simply $\{i,j\}$. 

  Next, suppose $|I|\geq 3$. As before, take any $\omega'\in[\omega]$ such that $i$ is a source in $\omega'$. Since  $G=\bar{G}^{\tor}(P)$, the directed edge $i\to j$ is present, and so by Proposition~\ref{prop:toric-interval-iii}, $[i,j]^{\tor}$ consists of all $k\in V$ that lie on a directed path from $i$ to $j$. This is precisely the definition of the interval $[i,j]$ in $P(G,\omega')$. 

To see how the converse can fail, take $G$ to be the line graph on $3$ vertices, and $\omega$ to be the orientation $1\to 2\to 3$. In $P(G,\omega')$, the interval $[1,3]$ is $\{1,2,3\}$ but since $1$ and $3$ do not lie on a common toric chain, $[1,3]^{\tor}=\varnothing$ in $P(G,[\omega])$.
\end{proof}

\begin{prop}\label{prop:Hasse}
  For any toric poset $P=P(G,[\omega])$,
  \begin{equation}\label{eq:inequalities}
  E(\hat{G}^{\Hasse}(P(G,\omega)))\subseteq
  E(\hat{G}^{\torHasse}(P(G,[\omega])))\subseteq E(\bar{G}^{\tor}(P))
  =\bigcap_{\omega'\in[\omega]}E(\bar{G}(P(G,\omega')))\,.
  \end{equation}
\end{prop}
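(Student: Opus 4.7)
The three assertions have very different flavors, so I will treat them separately. The middle inclusion $E(\hat{G}^{\torHasse}(P))\subseteq E(\bar{G}^{\tor}(P))$ is immediate from the characterization in Section~\ref{subsec:toric-hasse}: for every graph $G$ with $c(P)\in\Chambers\AAA_{\tor}(G)$ one has $\hat{G}^{\torHasse}(P)\subseteq G\subseteq \bar{G}^{\tor}(P)$, and taking $G=\hat{G}^{\torHasse}(P)$ forces the inclusion.

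For the right-hand equality, my plan is to combine Propositions~\ref{prop:toric-transitivity} and~\ref{prop:toric-chains}. By definition $\{i,j\}\in E(\bar{G}^{\tor}(P))$ iff $H^{\tor}_{i,j}\cap c(P)=\varnothing$; by Proposition~\ref{prop:toric-transitivity} this is equivalent to $\{i,j\}$ lying on a common toric chain of $P$. The equivalence (a)$\Leftrightarrow$(b) in Proposition~\ref{prop:toric-chains} holds for every $|C|$ (the sole restriction $|C|\neq 2$ concerns only (d)), so applying it to $C=\{i,j\}$ translates this into $\{i,j\}$ being a chain of $P(G,\omega')$ for every $\omega'\in[\omega]$, i.e., $\{i,j\}\in \bigcap_{\omega'\in[\omega]}E(\bar{G}(P(G,\omega')))$.

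The left-hand inclusion is the substantive claim, and my approach is a dimension count using Remarks~\ref{rem:faces} and~\ref{rem:toric-faces}. Let $\pi$ be the partition of $V$ whose only non-singleton block is $\{i,j\}$, so that $D^{\tor}_\pi=H^{\tor}_{i,j}$ has dimension $n-1$. If $\{i,j\}\in E(\hat{G}^{\Hasse}(P(G,\omega)))$, then WLOG $i<_{P(G,\omega)}j$ is a cover, so there is no $k$ with $i<_{P(G,\omega)}k<_{P(G,\omega)}j$. A short case check on the endpoints of a hypothetical cycle then shows that contracting the block $\{i,j\}$ in $\omega$ creates no directed cycle, whence $\omega/\!\!\sim_\pi$ is acyclic on $n-1$ vertices with $n-1$ strongly connected components. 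Remark~\ref{rem:faces} gives $\dim F_\pi(P(G,\omega))=n-1$, and then Remark~\ref{rem:toric-faces} yields
\[
\dim F^{\tor}_\pi(P(G,[\omega]))=\max_{\omega'\in[\omega]}\dim F_\pi(P(G,\omega'))\geq n-1,
\]
with equality since $\dim D^{\tor}_\pi=n-1$. Thus $\overline{c(P)}\cap H^{\tor}_{i,j}$ is a codimension-$1$ face of $\overline{c(P)}$, so $H^{\tor}_{i,j}$ is a facet of the toric chamber. Deleting it from $\AAA_{\tor}(G)$ must merge $c(P)$ with its neighbor across that facet, so by the minimality characterization of $\hat{G}^{\torHasse}(P)$ the edge $\{i,j\}$ must lie in $E(\hat{G}^{\torHasse}(P(G,[\omega])))$.

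The principal obstacle is the last link: translating ``$H^{\tor}_{i,j}$ is a facet of $\overline{c(P)}$'' into ``$\{i,j\}$ is required in the toric Hasse diagram.'' This is the toric analogue of the equivalence (i)$\Leftrightarrow$(iii) in Proposition~\ref{prop:facets}, and follows from the defining property of $\hat{G}^{\torHasse}(P)$ as the minimal graph whose toric arrangement has $c(P)$ as a chamber, combined with the elementary observation that a toric hyperplane meeting $\overline{c(P)}$ only in a set of codimension $\geq 2$ can be excised from the arrangement without disturbing the chamber.
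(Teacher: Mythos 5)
Your proof is correct, and the left-hand inclusion is handled by a genuinely different method than the paper uses. The paper's argument for $E(\hat{G}^{\Hasse}(P(G,\omega)))\subseteq E(\hat{G}^{\torHasse}(P))$ is purely combinatorial: it observes that the ordinary Hasse diagram is obtained from the toric Hasse diagram by deleting the edges $\{i_1,i_m\}$ that close off toric directed paths $i_1\to_{\tor}i_m$ of length $m\geq 3$, from which the containment is immediate. Your argument is geometric: you show that a cover relation $\{i,j\}$ of $P(G,\omega)$ forces $\omega/\!\!\sim_\pi$ (for $\pi$ the partition contracting only $\{i,j\}$) to be acyclic, then run a dimension count via Remarks~\ref{rem:faces} and~\ref{rem:toric-faces} to conclude $H^{\tor}_{i,j}$ meets $\overline{c(P)}$ in codimension one, and finally invoke the minimality characterization of $\hat{G}^{\torHasse}(P)$. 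What the paper's approach buys is brevity and an explicit bijective description of the difference between the two Hasse diagrams; what yours buys is a self-contained geometric argument that avoids appealing to the somewhat implicit combinatorial description of which toric edges become transitively redundant after fixing a representative $\omega$. One small phrasing caution in your write-up: ``merge $c(P)$ with its neighbor across that facet'' presumes a distinct neighboring chamber, which need not exist in the toric setting (the paper explicitly notes that for a tree $G$ with an edge, $\AAA_{\tor}(G)$ has a single chamber, so the chamber is its own ``neighbor''). The correct statement---and the one you in fact rely on---is simply that deleting $H^{\tor}_{i,j}$ strictly enlarges the chamber, so $c(P)$ is not a chamber of $\AAA_{\tor}(G\setmin\{i,j\})$, which by the minimality characterization forces $\{i,j\}\in E(\hat{G}^{\torHasse}(P))$. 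Your treatments of the middle inclusion and the right-hand equality agree with the paper's, modulo the harmless detour through Proposition~\ref{prop:toric-transitivity}.
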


\begin{proof}
  Given the toric Hasse diagram of $P(G,[\omega])$, the ordinary Hasse diagram
  of $P(G,\omega)$ is obtained by removing the edge $\{i_1,i_m\}$ for
  each toric directed path $i_1\to_{\tor} i_m$ in $P(G,\omega)$ of size at least $m\geq 3$. This establishes the first inequality in Eq.~\eqref{eq:inequalities}. 

  The second inequality is obvious. Loosely speaking, the final equality holds because edges in the toric transitive closure are precisely the size-$2$ toric chains, which are precisely the subsets that are size-$2$ chains in every representative poset. We will prove each containment explicitly. For ``$\subseteq$'', take an edge $\{i,j\}$ of $\bar{G}^{\tor}(P)$, which is a size-$2$ toric chain. By Proposition~\ref{prop:toric-chains}, $\{i,j\}$ is a toric chain of $P(G,\omega')$ for all $\omega'\in[\omega]$, which means that it is an edge of the transitive closure $\bar{G}(P(G,\omega'))$. The ``$\supseteq$'' containment is analogous: suppose $\{i,j\}$ is an edge of $\bar{G}(P(G,\omega'))$ for each $\omega'\in[\omega]$. Then by Proposition~\ref{prop:toric-chains}, it is a toric chain of $P$, and hence an edge of $\bar{G}^{\tor}(P)$.
\end{proof}

\subsection{Toric antichains}

An {\it antichain} of an ordinary poset $P$ is a subset $A\subseteq
V$ characterized
\begin{enumerate}
\item[$\bullet$] {\it combinatorially} by the condition that no pair
  $\{i,j\}\subseteq A$ with $i\neq j$ are comparable, that is, they lie
  on no common chain of $P$, or
\item[$\bullet$] {\it geometrically} by the equivalent condition that
  the $(|V|-|A|+1)$-dimensional subspace $D_{\pi_A}$ intersects the open
  polyhedral cone $c(P)$ in $\R^V$.
\end{enumerate}

As shown in~\cite{Develin:15}, these two conditions in the toric
setting lead to different notions of toric antichains which are both
easy to formulate. Unlike the case of ordinary posets, these two
definitions are non-equivalent; leading to two distinct versions of a
toric antichains, combinatorial and geometric. The following is the
geometric one which we will use in this paper. Its appearance in
Proposition~\ref{prop:toric-antichains}, which is one of the ``\emph{For
Some}'' structure theorems listed in the Introduction, suggests
that it is the more natural toric analogue of the two.

\begin{defn}
  Given a toric poset $P$ on $V$, say that $A\subseteq
  V$ is a \emph{geometric toric antichain} if $D^{\tor}_{\pi_A}$
  intersects the open toric chamber $c(P)$ in $\R^V/\Z^V$.
\end{defn}

The following characterization of toric antichains was established in~\cite{Develin:15}. It follows because if $D^{\tor}_{\pi_S}$ intersects the open toric chamber $c=c(P)$ in $\Chambers\AAA_{\tor}(G)$, then $D_{\pi_S}$ intersects the open chamber upstairs in $\Chambers\AAA(G)$.

\begin{prop}\label{prop:toric-antichains}
  Let $P=P(G,[\omega])$ be a toric poset. Then a set $A\subseteq V$ is a geometric toric antichain of $P$ if and only if $A$ is an antichain of $P(G,\omega')$ for some $\omega'\in[\omega]$.
\end{prop}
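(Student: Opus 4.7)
The plan is to lift the geometric condition from the torus $\R^V/\Z^V$ back to $\R^V$ via the covering $\quot$, exploiting Corollary~\ref{cor:order-polytope} and Eq.~\eqref{eq:unit-cube}, which jointly describe the preimage of $\overline{c(P)}$ inside the unit cube as a union of order polytopes $\O(P(G,\omega'))$ with $\omega'\in[\omega]$. Both directions amount to moving a witness point across $\quot$ and verifying it lands in the required open region.

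For the $(\Leftarrow)$ direction, suppose $A$ is an antichain of $P(G,\omega')$ for some $\omega'\in[\omega]$, so there exists $\hat x\in D_{\pi_A}\cap c(P(G,\omega'))$. Both $D_{\pi_A}$ and $c(P(G,\omega'))$ are preserved by the maps $y\mapsto\lambda y+t\mathbf{1}$ with $\lambda>0$ and $t\in\R$ (since $\mathbf{1}\in D_{\pi_A}$ and the chamber is a cone parallel to $\mathbf{1}$). Choosing $\lambda$ small enough that every $|\lambda(\hat x_i-\hat x_j)|<1$ and $t$ suitable, we may assume $\hat x\in(0,1)^V$, placing it in the interior of $\O(P(G,\omega'))$. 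Then differences $\hat x_i-\hat x_j$ lie in $(-1,1)$ and are nonzero for every edge $\{i,j\}\in E$, so $\quot(\hat x)$ avoids $\AAA_{\tor}(G)$ and lies in an open toric chamber; Corollary~\ref{cor:order-polytope} places it in $\overline{c(P)}$, hence in $c(P)$ itself. Since $\quot(\hat x)\in D^{\tor}_{\pi_A}$, $A$ is a geometric toric antichain.

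For the $(\Rightarrow)$ direction, take $x\in D^{\tor}_{\pi_A}\cap c(P)$ and let $\hat x\in[0,1)^V$ be its unique lift. The coordinates $\hat x_i$ for $i\in A$ agree modulo $1$ and lie in $[0,1)$, hence agree as numbers, so $\hat x\in D_{\pi_A}$. By Eq.~\eqref{eq:unit-cube}, $\hat x\in\O(P(G,\omega'))$ for some $\omega'\in\Acyc(G)$, and since $x\in c_{[\omega]}\cap\overline{c_{[\omega']}}$ while distinct open toric chambers are disjoint, $\omega'\in[\omega]$. To upgrade $\hat x\in\overline{c(P(G,\omega'))}$ to $\hat x\in c(P(G,\omega'))$, observe that $x\notin\AAA_{\tor}(G)$ forces $\hat x_i\not\equiv\hat x_j\pmod 1$ for every edge $\{i,j\}\in E$, in particular $\hat x_i\ne\hat x_j$; hence $\hat x$ lies in some open chamber of $\AAA(G)$, which must coincide with $c(P(G,\omega'))$ because both open chambers are $n$-dimensional and one is contained in the closure of the other. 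Thus $D_{\pi_A}\cap c(P(G,\omega'))\ne\varnothing$, and $A$ is an antichain of $P(G,\omega')$.

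The main obstacle is this last upgrade from closed chamber to open chamber in the $(\Rightarrow)$ direction. It rests on the auxiliary observation—immediate from the definition—that any geometric toric antichain $A$ must be independent in $G$, since otherwise $D^{\tor}_{\pi_A}\subseteq H^{\tor}_{ij}$ for some edge $\{i,j\}\in E$ and cannot meet $c(P)$. This ensures the equalities $\hat x_i=\hat x_j$ enforced by $\pi_A$ never place $\hat x$ on a hyperplane of $\AAA(G)$, so the lift genuinely lies in an open chamber, not merely on its boundary.
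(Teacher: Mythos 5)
Your proof is correct, and it follows the same lifting strategy that the paper sketches in one sentence (the paper itself mostly defers to \cite{Develin:15}, noting only that a point of $D^{\tor}_{\pi_A}\cap c(P)$ lifts to a point of $D_{\pi_A}$ in an open chamber of $\AAA(G)$). You have supplied the details the paper omits: using the scale-and-translate normalization to land the witness in $(0,1)^V$ for the $(\Leftarrow)$ direction, and invoking Corollary~\ref{cor:order-polytope} together with disjointness of distinct open chambers to pin down the correct representative $\omega'\in[\omega]$ in the $(\Rightarrow)$ direction.
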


\section{Morphisms of toric posets}\label{sec:toric-morphisms}

Morphisms of ordinary posets have equivalent combinatorial and
geometric characterizations. In contrast, while there seems to be no
simple or obvious combinatorial description for morphisms of toric
posets, the geometric version has a natural toric analogue. 

Firstly, it is clear how to define a \emph{toric isomorphism} between
two toric posets $P$ and $P'$ on vertex sets $V$ and $V'$: a bijection
$\phi\colon V\to V'$ such that the induced isomorphism on
$\R^V/\Z^V\to\R^{V'}/\Z^{V'}$ maps $c(P)$ to $c(P')$ bijectively. The
other types of ordinary poset morphisms have the following toric
analogues:
\begin{itemize}
\item \emph{quotients} that correspond to projecting the toric chamber
  onto a flat of $\AAA_{\tor}(G'_\pi)$ for some closed toric face partition $\pi=\cl_P^{\tor}(\pi)$;
\item \emph{inclusions} that correspond to embedding a toric chamber
  into a higher-dimensional chamber;
\item \emph{extensions} that add relations (toric hyperplanes).
\end{itemize} 

Since every poset morphism can be expressed as the composition of a
quotient, an inclusion, and an extension, it is well-founded to define
a \emph{toric poset morphism} to be the composition of the toric
analogues of these maps. In the remainder of this section, we will describe toric morphisms in detail. Most of the difficulties have already been done in Section~\ref{sec:poset-morphisms}, when interpreting the well-known concept of an ordinary poset morphism geometrically. In contrast, this section is simply an adaptation of this geometric framework from $\R^V$ to $\R^V/\Z^V$, though there are some noticeable differences. For example, there is no toric analogue of intersecting a chamber with a half-space, because the torus minus a hyperplane is connected.

\subsection{Quotient}

In the ordinary poset case, a quotient is performed by contracting
$P(G,\omega)$ by a partition $\pi=\{B_1,\dots,B_r\}$. Each
$B_i$ gets collapsed into a single vertex, and the resulting acyclic
graph is denoted by $\omega/\!\!\sim_\pi$, which is an element of $\Acyc(G/\!\!\sim_\pi)$. This does not carry over to the
toric case, because in general, contracting a partition will make some
representatives acyclic and others not. However, the geometric
definition has a natural analogue.

Now, let $P=P(G,[\omega])$ be a toric poset, and $\pi$ be any partition of $V$ closed with respect to $P$, i.e., $\pi=\cl_P^{\tor}(\pi)$. By construction, $D_\pi^{\tor}$ is a flat of $\AAA_{\tor}(G'_\pi)$, and so the subset $F_\pi^{\tor}(P)$ is a face of $\AAA_{\tor}(G'_\pi)$. First, we need a map that projects a point $x$ in $c(P)$ onto this face, which is relatively open in the subspace topology of $D^{\tor}_\pi$. This can be extended to the entire torus, by taking the unique map $\bar{d}_\pi$ that makes the following diagram commute, where $d_\pi$ is the mapping from Eq.~\eqref{eq:delta_pi}:
\[
\xymatrix{\R^V\ar[r]^{d_\pi}\ar[d]_q & D_\pi\ar[d]^q
  \\ \R^V/\Z^V\ar@{.>}[r]^{\bar{d}_\pi} & D^{\tor}_\pi}
\]
Explicitly, the map $\bar{d}_\pi$ takes a point $x\in\R^V/\Z^V$, lifts it to a
point $\lift{x}$ in an order polytope in $\R^V$, projects it onto
the flat $D_\pi$ as in Eq.~\eqref{eq:delta_pi}, and then maps
that point down to the toric flat $D^{\tor}_\pi$. In light of this, we will say that the map $\bar{d}_\pi$ is a \emph{projection} onto the toric flat $D^{\tor}_\pi$.

After projecting a chamber $c(P)$ onto a flat $D_\pi^{\tor}$ of $\AAA(G'_\pi)$, we need to project it homeomorphically onto a coordinate subspace of $\R^V/\Z^V$ so it is a chamber of a lower-dimensional toric arrangement. As in the ordinary case, let $W\subseteq V$ be any subset formed by removing all but $1$ coordinate from each $B_i$, and let
$\bar{r}_\pi\colon\R^V/\Z^V\longto\R^W/\Z^W$ be the induced
projection. The $\bar{r}_\pi$-image of $\AAA_{\tor}(G)$ will be the
toric arrangement $\AAA_{\tor}(G/\!\!\sim_\pi)$. As before, the following easily verifiable lemma ensures that our choice of $W\subseteq V$ does not matter.

\begin{lem}\label{lem:toric-homeomorphism}
  Let $\pi=\{B_1,\dots,B_r\}$ be a partition of $V$, and
  $W=\{b_1,\dots,b_r\}\subseteq V$ with $b_i\in B_i$. The restriction
  $\bar{r}_\pi|_{D^{\tor}_\pi}\colon D^{\tor}_\pi\longto\R^W/\Z^W$ is
  a homeomorphism.

  Moreover, all such projection maps for a fixed $\pi$ are
  topologically conjugate in the following sense: If
  $W'=\{b'_1,\dots,b'_r\}\subseteq V$ with $b'_i\in B_i$, and
  projection map $\bar{r}'_\pi|_{D_\pi}\colon D^{\tor}_\pi\longto\R^{W'}/\Z^{W'}$, and
  $\sigma$ is the permutation of $V$ that transposes each $b_i$ with
  $b'_i$, then the following diagram commutes:
  \[
  \xymatrix{D^{\tor}_\pi\ar@{^{(}->}[r] &
    \R^V/\Z^V\ar[r]^{\bar{r}_\pi}\ar[d]_{\bar\Sigma} & \R^W/\Z^W\ar[d]^{\bar\Sigma'}
    \\ D^{\tor}_\pi\ar@{^{(}->}[r] & \R^V/\Z^V\ar[r]^{\bar{r}'_{\pi}} &
    \R^{W'}/\Z^{W'}}
  \]
  Here, $\sigma'\colon W\to W'$ is the map $b_i\mapsto b'_i$, with
  $\bar\Sigma$ and $\bar\Sigma'$ being the induced linear maps as defined
  in Eq.~\eqref{eq:induced}, but done modulo $1$.
\end{lem}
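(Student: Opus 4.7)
The plan is to deduce this lemma from its ordinary counterpart, Lemma~\ref{lem:homeomorphism}, by descending through the covering map $q\colon\R^V\to\R^V/\Z^V$. Observe first that the restriction $q|_{D_\pi}\colon D_\pi\to D^{\tor}_\pi$ is itself a covering map, so that $D^{\tor}_\pi\cong D_\pi/(D_\pi\cap\Z^V)$. By construction, $\bar{r}_\pi$ is the unique map making the square
\[
\xymatrix{\R^V\ar[r]^{r_\pi}\ar[d]_q & \R^W\ar[d]^q \\ \R^V/\Z^V\ar[r]^{\bar{r}_\pi} & \R^W/\Z^W}
\]
commute, and this square restricts to an analogous one with $D_\pi$ and $D^{\tor}_\pi$ on the left.

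For the first part, the key step is to check that the linear homeomorphism $r_\pi|_{D_\pi}\colon D_\pi\longto\R^W$ furnished by Lemma~\ref{lem:homeomorphism} carries the lattice $D_\pi\cap\Z^V$ bijectively onto $\Z^W$. This is immediate from the structure of $D_\pi$: a point of $D_\pi\cap\Z^V$ is an integer vector that is constant on each block $B_i$, hence uniquely determined by its values on the chosen representatives $b_1,\dots,b_r$; conversely, any $(z_1,\dots,z_r)\in\Z^W$ extends uniquely to such a vector by setting each block to be constantly the value of its representative. A linear homeomorphism that restricts to a bijection between two lattices automatically descends to a homeomorphism of the quotient tori, so $\bar{r}_\pi|_{D^{\tor}_\pi}\colon D^{\tor}_\pi\longto\R^W/\Z^W$ is a homeomorphism.

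For the conjugacy statement, I would simply descend the corresponding commutative diagram from Lemma~\ref{lem:homeomorphism} through the quotient. The coordinate permutation $\Sigma\colon\R^V\to\R^V$ preserves $\Z^V$ and therefore descends to $\bar{\Sigma}\colon\R^V/\Z^V\to\R^V/\Z^V$, and similarly $\Sigma'$ descends to $\bar{\Sigma}'\colon\R^W/\Z^W\to\R^{W'}/\Z^{W'}$. Applying $q$ to each vertex of the commuting square in $\R^V$, and using that $q$ intertwines $\Sigma$ with $\bar{\Sigma}$ and $r_\pi$ with $\bar{r}_\pi$, one obtains the asserted commutative diagram in the torus.

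The only real obstacle, which is mild, is the verification that $r_\pi|_{D_\pi}$ sends $D_\pi\cap\Z^V$ bijectively to $\Z^W$; once this is in hand, the homeomorphism and the conjugacy statement both follow formally from the universal property of the quotient by a discrete subgroup acting by translations, so the proof is genuinely ``elementary'' in the same sense as its non-toric analogue.
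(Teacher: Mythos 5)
Your proof is correct, and the paper simply omits a proof of both this lemma and its ordinary counterpart Lemma~\ref{lem:homeomorphism}, calling them ``easily verifiable,'' so there is no paper argument to compare against. Your route — descending from Lemma~\ref{lem:homeomorphism} through the covering map — is the natural one, and you put your finger on exactly the one nontrivial point: that the linear homeomorphism $r_\pi|_{D_\pi}\colon D_\pi\to\R^W$ carries the lattice $D_\pi\cap\Z^V$ bijectively onto $\Z^W$, which is what allows the map to descend to a continuous bijection of the quotient tori; compactness of $D^{\tor}_\pi$ then upgrades this to a homeomorphism. The conjugacy statement indeed follows formally since the permutation matrices $\Sigma,\Sigma'$ preserve the integer lattices and therefore descend, and the commuting square downstairs is obtained by applying $q$ to the one upstairs. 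One small point worth making explicit if you write this up: the identification $D^{\tor}_\pi\cong D_\pi/(D_\pi\cap\Z^V)$ as topological spaces (not just sets) uses that $D_\pi$ is a \emph{rational} subspace, so $D_\pi\cap\Z^V$ is a full-rank lattice in $D_\pi$ and the quotient is compact, after which a continuous bijection onto the Hausdorff image is automatically a homeomorphism.
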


By convexity (in the fundamental affine chambers), two points in the same face of $\AAA_{\tor}(G)$ get mapped to the same face in $\AAA_{\tor}(G/\!\!\sim_\pi)$. In other words, $\bar{d}_\pi$ induces a well-defined map $\bar\delta_\pi$ from $\Faces\AAA_{\tor}(G)$ to $\Faces\AAA_{\tor}(G'_\pi)$ making the following diagram commute:
\[
\xymatrix{\R^V/\Z^V\ar[r]^{\bar{d}_\pi}\ar[d] & D^{\tor}_\pi\ar[d]
  \\ \Faces\AAA_{\tor}(G)\ar@{.>}[r]^{\bar\delta_\pi} &
  \Faces\AAA_{\tor}(G/\!\!\sim_\pi)}
\]
Explicitly, the map $\bar\delta_\pi$ is easiest to defined by the analogous map on closed faces:
\[
\bar\delta_\pi\colon\overline\Faces\,\AAA_{\tor}(G)\longto\overline\Faces\,\AAA_{\tor}(G'_\pi)
\,,\qquad\bar\delta_\pi\colon\overline{c(P)}\cap D_\sigma^{\tor}\longmapsto 
\overline{c(P)}\cap D_\sigma^{\tor}\cap D_\pi^{\tor}\,.
\]
The open faces of $\AAA_{\tor}(G'_\pi)$ are then mapped to faces of the arrangement $\AAA_{\tor}(G/\!\!\sim_\pi)$ under the projection $\bar{r}_\pi|_{D_\pi}\colon D^{\tor}_\pi\longto\R^{W}/\Z^{W}$. Combinatorially, the open faces of $\AAA_{\tor}(G)$ are toric preposets $[\omega]$ over $G$ (i.e., in $\Pre(G)/\!\!\equiv$). These are mapped to toric preposets over $G/\!\!\sim_\pi$ via the composition
\[
\Pre(G)\stackrel{\bar{q}_\pi}{\longto}\Pre(G'_\pi)\stackrel{\bar{p}_\pi}{\longto}\Pre(G/\!\!\sim_\pi)\,,\qquad\qquad
[\omega]\stackrel{\bar{q}_\pi}{\longmapsto}[\omega'_\pi]
\stackrel{\bar{p}_\pi}{\longmapsto}[\omega'_\pi/\!\!\sim_\pi]=[\omega/\!\!\sim_\pi]\,.
\]
The following commutative diagram illustrates the relationship between
the points in $\R^V/\Z^V$, the faces of the toric graphic arrangement
$\AAA_{\tor}(G)$, and the toric preposets over $G$. The left column
depicts the toric preposets over $G$ that are also toric posets.
\begin{equation*}\label{eq:6squares-toric}
  \xymatrix{\R^V/\Z^V\setmin\AAA_{\tor}(G)\,\ar@{^{(}->}[r]\ar[d] &
    \R^V/\Z^V\ar[r]^{\bar{d}_\pi}\ar[d] & D^{\tor}_\pi\ar[r]^{\bar{r}_\pi}\ar[d]
    & \R^W/\Z^W\ar[d]
    \\ \Chambers\AAA_{\tor}(G)\,\ar@{^{(}->}[r]\ar@{<->}[d] &
    \Faces\AAA_{\tor}(G)\ar[r]^{\bar\delta_\pi}\ar@{<->}[d] &
    \Faces\AAA_{\tor}(G'_\pi)\ar[r]^{\bar\rho_\pi}\ar@{<->}[d] &
    \Faces\AAA_{\tor}(G/\!\!\sim_\pi)\ar@{<->}[d]
    \\ \Acyc(G)/\!\!\equiv\,\ar@{^{(}->}[r] &
    \Pre(G)/\!\!\equiv\ar[r]^{\bar{q}_\pi} &
    \Pre(G'_\pi)/\!\!\equiv\ar[r]^{\bar{p}_\pi} & \Pre(G/\!\!\sim_\pi)/\!\!\equiv }
\end{equation*}

To summarize, toric poset morphisms that are quotients are characterized
geometrically by projecting the toric chamber $c(P)$ onto a flat of $\AAA_{\tor}(G'_\pi)$, for some closed toric face partition $\pi=\cl_P^{\tor}(\pi)$. Applying Theorem~\ref{thm:toric-face-partitions} gives a combinatorial interpretation of this, which was not \emph{a priori} obvious.

\begin{cor}\label{cor:toric-morphism}
  Let $P=P(G,[\omega])$ be a toric poset. Contracting $G$ by a partition $\pi\in\Pi_V$ yields a morphism to a toric poset over $G/\!\!\sim_\pi$ if and only if $\omega'/\!\!\sim_\pi$ is acyclic for some orientation $\omega'\in[\omega]$. $\hfill\Box$
\end{cor}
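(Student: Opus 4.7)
The plan is to combine the geometric description of quotient toric morphisms developed in the preceding paragraphs with the combinatorial characterization of closed partitions given in Remark~\ref{rem:toric-faces}. The corollary amounts to saying that the quotient construction lands in $\Acyc(G/\!\!\sim_\pi)/\!\!\equiv$ exactly when the closedness condition $\pi=\cl_P^{\tor}(\pi)$ holds, since that is the one nontrivial requirement in the geometric setup.

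For the backward direction, I would assume $\omega'/\!\!\sim_\pi$ is acyclic for some $\omega'\in[\omega]$. By Remark~\ref{rem:toric-faces}, this is equivalent to $\pi$ being closed with respect to $P$. Once closedness is in hand, the geometric framework applies: the projection $\bar{d}_\pi$ sends $\R^V/\Z^V$ onto the toric flat $D^{\tor}_\pi$, and the homeomorphism $\bar{r}_\pi|_{D^{\tor}_\pi}$ of Lemma~\ref{lem:toric-homeomorphism} identifies this flat with a toric chamber of $\AAA_{\tor}(G/\!\!\sim_\pi)$. Reading off the bottom row of the large commutative diagram at the end of this subsection, the combinatorial effect of this composition is $[\omega]\mapsto [\omega'/\!\!\sim_\pi]$, and the target lies in $\Acyc(G/\!\!\sim_\pi)/\!\!\equiv$ since it contains the acyclic representative $\omega'/\!\!\sim_\pi$. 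This gives the required quotient morphism $P\longrightarrow P(G/\!\!\sim_\pi,[\omega'/\!\!\sim_\pi])$.

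For the forward direction, suppose that contracting $G$ by $\pi$ yields a morphism to a toric poset $P'$ over $G/\!\!\sim_\pi$. Then $P'=P(G/\!\!\sim_\pi,[\tau])$ for some $\tau\in\Acyc(G/\!\!\sim_\pi)$, and the combinatorial content of ``contracting'' forces $[\omega/\!\!\sim_\pi]=[\tau]$ in $\Pre(G/\!\!\sim_\pi)/\!\!\equiv$. In particular the toric equivalence class $[\omega/\!\!\sim_\pi]$ contains an acyclic orientation of $G/\!\!\sim_\pi$; walking along the chain of source-to-sink flips realizing $\omega/\!\!\sim_\pi\equiv\tau$ and lifting them through the map $\bar{p}_\pi\circ\bar{q}_\pi$, one obtains some $\omega'\in[\omega]$ whose contraction $\omega'/\!\!\sim_\pi$ is acyclic.

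The main subtlety is this last lifting step: one needs to know that flips downstairs can be matched by flips upstairs when $\pi$ is closed with respect to $P$. This is exactly the content of the commutative diagram at the end of the quotient subsection together with the bijection of Proposition~\ref{prop:toric-bijection}, which guarantee that $\bar{q}_\pi$ and $\bar{p}_\pi$ respect toric equivalence. If one prefers to sidestep this, one can argue more directly: by Remark~\ref{rem:toric-faces}, the mere existence of a well-defined quotient morphism forces $\pi$ to be closed with respect to $P$, and the acyclicity condition follows at once.
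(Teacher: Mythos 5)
Your proof is correct and takes essentially the same route as the paper: the quotient construction requires $\pi$ to be closed with respect to $P$, and the ``In particular'' sentence of Remark~\ref{rem:toric-faces} translates that closedness into the existence of some $\omega'\in[\omega]$ with $\omega'/\!\!\sim_\pi$ acyclic. (The paper's summary paragraph cites Theorem~\ref{thm:toric-face-partitions} rather than Remark~\ref{rem:toric-faces}, but the remark is the statement actually doing the work here, since connectedness is made irrelevant by passing through $G'_\pi$; your citation is the more precise one.) The flip-lifting detour you sketch in the forward direction is unnecessary and, as you note yourself, can be replaced by the direct observation that if the contraction lands in a genuine toric poset over $G/\!\!\sim_\pi$ then the projected face must be full-dimensional, so $\pi=\cl_P^{\tor}(\pi)$, and Remark~\ref{rem:toric-faces} finishes.
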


The following is now immediate from Propositions~\ref{prop:toric-intervals} and \ref{prop:toric-antichains}.

\begin{cor}
  Let $P$ be a toric poset over $V$. Then contracting a toric
  interval $I\subseteq V$ or a geometric toric antichain $A\subseteq
  V$ defines a toric morphism. $\hfill\Box$
\end{cor}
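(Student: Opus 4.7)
The plan is to reduce both claims to Corollary~\ref{cor:toric-morphism}, which characterizes when contracting $G$ by a partition $\pi$ yields a toric morphism: namely, when $\omega'/\!\!\sim_\pi$ is acyclic for some $\omega' \in [\omega]$. For a subset $S\subseteq V$, ``contracting by $S$'' means applying Corollary~\ref{cor:toric-morphism} to the partition $\pi_S$ from Eq.~\eqref{eq:pi_S}, whose only nonsingleton block is $S$. So the whole task amounts to producing, in each case, a suitable representative $\omega' \in [\omega]$ making $\omega'/\!\!\sim_{\pi_S}$ acyclic.

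For the toric interval case, I would invoke Proposition~\ref{prop:toric-intervals}: since $I=[i,j]^{\tor}$ is a toric interval of $P=P(G,[\omega])$, there exists some $\omega'\in[\omega]$ for which $I$ is the ordinary interval $[i,j]$ of $P(G,\omega')$. It is a standard elementary fact (explicitly noted in the paper just before Eq.~\eqref{eq:pi_S}) that contracting an ordinary poset by an interval yields an acyclic preposet. Thus $\omega'/\!\!\sim_{\pi_I}$ is acyclic, and Corollary~\ref{cor:toric-morphism} concludes that contracting $G$ by $\pi_I$ defines a morphism of toric posets.

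For the geometric toric antichain case, the argument is parallel: by Proposition~\ref{prop:toric-antichains}, $A$ being a geometric toric antichain of $P$ is equivalent to $A$ being an (ordinary) antichain of $P(G,\omega')$ for some $\omega'\in[\omega]$. Again, by the elementary fact recalled before Eq.~\eqref{eq:pi_S}, contracting an ordinary poset by an antichain yields an acyclic preposet, so $\omega'/\!\!\sim_{\pi_A}$ is acyclic, and Corollary~\ref{cor:toric-morphism} applies.

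There is really no main obstacle here: both halves are essentially one-line combinations of an existing ``For Some'' structure theorem (Proposition~\ref{prop:toric-intervals} or \ref{prop:toric-antichains}) with Corollary~\ref{cor:toric-morphism}, together with the well-known fact that contracting an ordinary poset by an interval or antichain preserves acyclicity. The only thing to be careful about is using the correct partition $\pi_S$ (with $S$ as the unique nontrivial block) so that the ``contraction by a subset'' terminology matches the partition-theoretic framework of Corollary~\ref{cor:toric-morphism}.
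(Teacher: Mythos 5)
Your proof is correct and matches the paper's intended argument exactly: the paper states the corollary as ``immediate from Propositions~\ref{prop:toric-intervals} and \ref{prop:toric-antichains},'' and your write-up simply makes explicit the short chain of reasoning --- pass to a representative $\omega'$ via the relevant ``For Some'' proposition, invoke the standard fact that contracting an ordinary poset by an interval or antichain preserves acyclicity, and apply Corollary~\ref{cor:toric-morphism} with the partition $\pi_S$.
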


\subsection{Inclusion}

Just like for ordinary posets, a toric poset can be included in larger
one. Let $P$ be a poset over $V$ and let $V\subsetneq V'$. The simplest
injection adds vertices (dimension) but no edges (extra relations). In
this case, the inclusion $\phi\colon P\into P'$ defines a canonical
inclusion $\Phi\colon\R^V/\Z^V\into\R^{V'}/\Z^{V'}$. This sends the
arrangement $\AAA_{\tor}(G)$ in $\R^V$, where $G=(V,E)$, to the same
higher-dimensional arrangement:
\[
\AAA_{\tor}(G\cup(V'\setmin V)):=\{H^{\tor}_{ij}\text{ in
}\R^{V'}:\{i,j\}\in E\}\,.
\]
The toric chamber $c=c(P)$ is sent to the chamber
\[
c(P')=\{x\in\R^{V'}:x_i<x_j\text{ for } \{i,j\}\in E\}\,.
\]

More generally, an injection $P\to P'$ can have added relations in
$P'$ either among the vertices in $P$ or those in $V'\setmin V$.  Such
a map is simply the composition of an inclusion described above and a
toric extension, described below.

\subsection{Extension}

Extensions of ordinary posets were discussed in
Section~\ref{subsec:extensions}. A poset $P'$ is an extension of $P$
(both assumed to be over the same set $V$) if any of the three
equivalent conditions holds:
\begin{itemize}
  \item $i\leq_P j$ implies $i\leq_{P'} j$;
  \item $\hat{G}^{\Hasse}(P)\subseteq\hat{G}^{\Hasse}(P')$, where
    $\subseteq$ is inclusion of edge sets;
  \item $c(P')\subseteq c(P)$.
\end{itemize}

The first of these conditions does not carry over nicely to the toric
setting, but the second two do. A toric poset $P'$ is a \emph{toric
  extension} of $P$ if and only one has an inclusion of their open
polyhedral cones $c(P')\subseteq c(P)$ in $\R^V/\Z^V$, which is
equivalent to
$\hat{G}^{\torHasse}(P)\subseteq\hat{G}^{\torHasse}(P')$.

\subsection{Summary}

Up to isomorphism, every toric poset morphism $P=P(G,[\omega])$ can be decomposed into a sequence of three steps:
\begin{enumerate}[(i)]
\item \emph{quotient}: Collapsing $G$ by a partition $\pi$ that preserves acyclicity of some $\omega'\in[\omega]$ (projecting to a flat $D_\pi^{\tor}$ of $\AAA(G'_\pi)$ for some partition $\pi=\cl_P^{\tor}(\pi)$).
\item \emph{inclusion}: Adding vertices (adding dimensions).
\item \emph{extension}: Adding relations (cutting the chamber with
  toric hyperplanes).
\end{enumerate}
Note that in the special case of the morphism $P\longto P'$ being
surjective, the inclusion step is eliminated and the entire process
can be described geometrically by projecting $\overline{c(P)}$ to a
toric flat $D^{\tor}_\pi$ and a then adding toric hyperplanes.

\section{Toric order ideals and filters}\label{sec:toric-ideals}

 Let $P$ be a poset over a set $V$ of size at least $2$, and suppose $\phi\colon P\to P'$ is a morphism to a poset over a size-$2$ subset $V'\subseteq V$. This is achieved by projecting $c(P)$ onto a flat $D_\pi$ of $\AAA(G'_{\pi})$ such that $\pi=\cl_P(\pi)$ has at most two blocks, and hence $\bar{F}_\pi=\overline{c(P)}\cap D_\pi$ is at most $2$-dimensional. A point $x=(x_1,\dots,x_n)$ on $F_\pi$ has at most two distinct entries. Thus, the partition $\pi=\{I,J\}$ of $V$ satisfies
\begin{itemize}
\item $x_{i_k}=x_{i_\ell}$ for all $i_k,i_\ell$ in $I$;
\item $x_{j_k}=x_{j_\ell}$ for all $j_k,j_\ell$ in $J$;
\item $x_i\leq x_j$ for all $i\in I$ and $j\in J$.
\end{itemize}
The set $I$ is called an \emph{order ideal} or just an \emph{ideal} of
$P$ and $J$ is called a \emph{filter}. 

Ideal/filter pairs are thus characterized by closed partitions $\pi$ of $V$ such that $D_\pi$ intersects $\overline{c(P)}$ in at most two dimensions. The set of ideals has a natural poset structure by subset
inclusion. Allowing $I$ or $J$ to be empty, this poset has a
unique maximal element $I=V$ (corresponding to $J=\varnothing$) and
minimal element $I=\varnothing$ (corresponding to $J=V$). Moreover,
the order ideal poset is a lattice; this is
well-known~\cite{Stanley:01}. Similarly, the set of filters is a
lattice as well.

Toric order ideals and filters can be defined similarly.

\begin{defn}
  Let $P$ be a toric poset over $V$, and suppose $\phi\colon P\to
  P'$ is a morphism to a toric poset over a size-$2$
  subset $V'\subseteq V$. This projects $c(P)$ onto a toric flat
  $D^{\tor}_\pi$ of $\AAA_{\tor}(G'_\pi)$ for some $\pi=\cl_P^{\tor}(\pi)$ such that
  $\bar{F}^{\tor}_\pi=\overline{c(P)}\cap D^{\tor}_\pi$ is at most $2$-dimensional.
  For the partition $\pi=\{I,J\}$ of $V$, each point
  $x=(x_1,\dots,x_n)$ on $F^{\tor}_\pi$ satisfies
  \begin{itemize}
  \item $\modone{x_{i_k}}=\modone{x_{i_\ell}}$ for all $i_k,i_\ell$ in $I$;
  \item $\modone{x_{j_k}}=\modone{x_{j_\ell}}$ for all $j_k,j_\ell$ in $J$.
  \end{itemize}
  The set $I$ is called a \emph{toric order ideal} of $P$.
\end{defn}

\begin{rem}\label{rem:toric-filter}
  By symmetry, if $I$ is a toric order ideal, then so is $J:=V\setmin I$. A \emph{toric filter} can be defined analogously, and it is clear that these two concepts are identical. Henceforth, we will stick with the term ``toric filter'' to avoid ambiguity with the well-established but unrelated notion of a toric ideal from commutative algebra and algebraic geometry \cite{sturmfels1996grobner}.
\end{rem}

By construction, toric filters are characterized by closed toric partitions $\pi$ of $V$ such that $D_\pi^{\tor}$ intersects $\overline{c(P)}$ in at most two dimensions -- either a two-dimensional face of $P$ or of an extension $P'$ over $G'_\pi$.

\begin{prop}\label{prop:toric-ideals}
  Let $P(G,[\omega])$ be a toric poset. The following are equivalent for a subset $I\subseteq V$.
  \begin{enumerate}[(i)]
    \item $I$ is a toric filter of $P(G,[\omega])$;
    \item $I$ is an ideal of $P(G,\omega')$ for some $\omega'\in[\omega]$;
    \item $I$ is a filter of $P(G,\omega'')$ for some $\omega''\in[\omega]$;
    \item In at least one total toric extension of $P(G,[\omega])$, the
      elements in $I$ appear in consecutive cyclic order.
  \end{enumerate}
\end{prop}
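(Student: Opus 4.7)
Set $J=V\setmin I$ and $\pi=\{I,J\}$ throughout. The plan is to establish the cycle (ii)$\Leftrightarrow$(iv)$\Leftrightarrow$(iii) by translating between linear and total toric extensions, and separately to prove (i)$\Leftrightarrow$(ii)$\vee$(iii) using the characterization of closed toric partitions. Combined with the symmetry between (ii) and (iii), this yields the full equivalence.

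For the equivalence (i)$\Leftrightarrow$(ii)$\vee$(iii), recall from the definition that $I$ is a toric filter exactly when $\pi=\{I,J\}$ is closed with respect to $P=P(G,[\omega])$ (the dimension condition $\dim\bar{F}^{\tor}_\pi\leq 2$ is automatic since $D^{\tor}_\pi$ itself is at most $2$-dimensional). By Remark~\ref{rem:toric-faces}, $\pi=\cl_P^{\tor}(\pi)$ iff $\omega'/\!\!\sim_\pi$ is acyclic for some $\omega'\in[\omega]$. Since $\pi$ has only two blocks, the quotient graph has two vertices, and acyclicity is equivalent to all edges of $\omega'$ running between $I$ and $J$ having a common direction (or there being no such edges). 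Orienting all such edges from $I$ to $J$ is precisely the condition that $I$ be an ideal of $P(G,\omega')$; the opposite orientation says $I$ is a filter. Thus (i) is equivalent to ``$I$ is an ideal or a filter of some $P(G,\omega')$ with $\omega'\in[\omega]$''.

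Next, I would prove (ii)$\Rightarrow$(iv): given that $I$ is an ideal of $P(G,\omega')$, form a linear extension $w=(w_1,\dots,w_n)$ of $P(G,\omega')$ by first listing a linear extension of $P(G,\omega')|_I$ and then one of $P(G,\omega')|_J$; this is a valid linear extension precisely because $I$ is downward-closed. The cyclic class $[w]$ is a total toric extension of $P(G,[\omega])$ since the restriction of $w$ to $G$ is $\omega'\in[\omega]$, and the elements of $I$ occupy positions $1,\dots,|I|$ in $w$, which is a consecutive block cyclically. Conversely, for (iv)$\Rightarrow$(ii), given a total toric extension $[w]\in\LLL_{\tor}(P)$ in which $I$ forms a consecutive cyclic block, select the cyclic rotation $w'\in[w]$ where $I$ occupies the initial segment; restricting $w'$ to $G$ yields some $\omega'\in[\omega]$, and $I$ is an ideal of $P(G,\omega')$ because it is an initial segment of a linear extension. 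The equivalence (iv)$\Leftrightarrow$(iii) is entirely symmetric: choose instead the cyclic rotation placing $I$ in the terminal segment.

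The proof is largely bookkeeping, so the main obstacle is conceptual rather than technical: one must be careful that cyclic rotation of a total order on $V$ corresponds exactly to the source-to-sink flips generating $\equiv$ on $\Acyc(K_V)$, so that a single cyclic class $[w]$ restricts consistently to a single class $[\omega']\in\Acyc(G)/\!\!\equiv$. This observation -- which underlies the indexing of total toric orders by cyclic permutation classes -- is what lets us translate freely between the ``$I$ first'' and ``$I$ last'' representatives of the same total toric extension, and thus equates the ideal and filter conditions in (ii) and (iii).
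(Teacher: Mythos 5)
Your proof is correct. The main point of divergence from the paper is your handling of (i)$\Leftrightarrow$(ii): the paper proves (i)$\Leftrightarrow$(ii) purely geometrically, by observing that a toric filter corresponds to $D^{\tor}_\pi$ meeting $\overline{c(P(G,[\omega]))}$ in two dimensions, then lifting to an order polytope $\O(P(G,\omega'))$ where $D_\pi$ meets the chamber in two dimensions. You instead invoke the combinatorial translation from Remark~\ref{rem:toric-faces}: $\pi=\{I,J\}$ is closed iff $\omega'/\!\!\sim_\pi$ is acyclic for some $\omega'\in[\omega]$, and for a two-block partition acyclicity of the quotient literally means all cross-edges agree in direction, i.e.\ $I$ is an ideal or a filter. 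This gives you (i)$\Leftrightarrow$(ii)$\vee$(iii) in one shot, which you then need to close via (ii)$\Leftrightarrow$(iv)$\Leftrightarrow$(iii); the paper instead gets (i)$\Leftrightarrow$(ii) directly and derives (ii)$\Leftrightarrow$(iii) from (iv). Both routes are sound, and your combinatorial reduction is arguably the cleaner one here since it avoids tracking dimensions of lifts to order polytopes. Your treatments of (ii)$\Leftrightarrow$(iv) and the symmetric (iii)$\Leftrightarrow$(iv) match the paper's: building a linear extension with $I$ as an initial (resp.\ terminal) segment, and conversely rotating a cyclic word so that $I$ sits at the front and reading off a linear extension of some representative. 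One cosmetic omission: the paper first disposes of the trivial cases $I=\varnothing$ and $I=V$ (where $\pi$ has one block and $D^{\tor}_\pi$ is only one-dimensional); you should do the same before asserting that $\dim \bar{F}^{\tor}_\pi\leq 2$ is the only content of the dimension condition.
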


\begin{proof}
  The result is obvious if $I=\varnothing$ or $I=V$, so assume that  $\varnothing\subsetneq I\subsetneq V$, and $\pi=\{I,V\setmin I\}$.
  This forces $D^{\tor}_\pi$ to be two-dimensional (rather than
  one-dimensional). 

  (i)$\Rightarrow$(ii): If $I$ is a toric filter of $P(G,[\omega])$,
  then $D_\pi^{\tor}$ intersects $c(P(G,[\omega]))$ in
  two-dimensions, and so $D_\pi$ intersects an order polytope
  $\O(P(G,\omega'))$ in two-dimensions, for some
  $\omega'\in[\omega]$. Therefore, $D_\pi$ intersects the chamber $c(P(G,\omega'))$ in two-dimensions, and hence $I$ is an ideal of $P(G,\omega')$.

  (ii)$\Rightarrow$(i): Suppose that $I$ is an ideal of
  $P(G,\omega')$ for $\omega'\in[\omega]$. Then 
  $\bar{F}_\pi=\overline{c(P(G,\omega'))}\cap D_\pi$ is two-dimensional, and it descends to a two-dimensional face $\bar{F}^{\tor}_\pi$ of the toric poset
  $P(G,[\omega'])=P(G,[\omega])$ or of some extension (if $\bar{F}^{\tor}$ intersects the interior). Therefore, $I$ is a toric filter of
  $P(G,[\omega])$.

  (ii)$\Leftrightarrow$(iii): Immediate by Remark~\ref{rem:toric-filter} upon reversing the roles of $I$ and $V\setmin I$. 

  (ii)$\Rightarrow$(iv): If $I$ is a size-$k$ ideal of
  $P(G,\omega')$, then by a well-known property of posets, there is a linear extension of the form
  $(i_1,\dots,i_k,v_{k+1},\dots,v_n)$, where each $i_j\in I$. The
  cyclic equivalence class $[(i_1,\dots,i_k,v_{k+1},\dots,v_n)]$ is
  a total toric extension of $P(G,[\omega'])=P(G,[\omega])$ in
  which the elements of $I$ appear in consecutive cyclic order.
  
  (iv)$\Rightarrow$(ii): Suppose $[(i_1,\dots,i_k,v_{k+1},\dots,v_n)]$
  is a toric total extension of $P(G,[\omega])$. This means that for
  some $x\in\R^V/\Z^V$,
  \begin{equation}\label{eq:total-order}
    0\leq x_{i_1}<\cdots<x_{i_k}<x_{v_{k+1}}<\cdots<x_{v_n}<1\,.
  \end{equation}
  The unique preimage $\lift{x}$ of this point in $[0,1)^V$ under
  the quotient map $q\colon\R^V\to\R^V/\Z^V$ is in the 
  order polytope of some $P(G,\omega')$ that maps into
  $\overline{c(P(G,[\omega']))}=\overline{c(P(G,[\omega]))}$.
  Since the coordinates of $\lift{x}$ are totally ordered as
  in Eq.~\eqref{eq:total-order}, $(i_1,\dots,i_k,v_{k+1},\dots,v_n)$
  is a linear extension of $P(G,\omega')$. Moreover, the coordinates in $I$ form an initial segment of this linear extension, hence $I$ is an ideal of $P(G,\omega')$.

  (iii)$\Leftrightarrow$(iv): Immediate by Remark~\ref{rem:toric-filter} upon reversing the roles of $I$ and $V\setmin I$. 
\end{proof}

Proposition~\ref{prop:toric-ideals} along with a result of Stanley gives a nice characterization of the toric filters in terms of vertices of order polytopes. Every filter $I$ of a poset $P$ has a \emph{characteristic function}
\[
\chi_I\colon P\longto\R\,,\qquad \chi_I(k)=\begin{cases}1,& k\in I \\ 0, & k\not\in I.\end{cases}
\]
We identify $\chi_I(P)$ with the corresponding vector in $\{0,1\}^V\subseteq\R^V$.

\begin{prop}{(\cite{Stanley:86}, Corollary 1.3)}
  Let $P$ be a poset. The vertices of the order polytope $\O(P)$ are the characteristic functions $\chi_I$ of filters of $P$. 
\end{prop}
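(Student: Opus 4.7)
The plan is to establish both inclusions: each characteristic function $\chi_I$ of a filter $I$ of $P$ is a vertex of $\O(P)$, and conversely every vertex of $\O(P)$ arises this way. The easy direction comes first. That $\chi_I \in \O(P)$ is immediate: if $i\leq_P j$ and $i\in I$, then $j\in I$ by the filter property, so $\chi_I(i)=\chi_I(j)=1$; and if $i\notin I$, then $\chi_I(i)=0 \leq \chi_I(j)$ regardless. Vertex-hood follows because $\chi_I$ takes only the extreme values $0$ and $1$ in $[0,1]$: any decomposition $\chi_I=\lambda y+(1-\lambda)z$ with $y,z\in\O(P)\subseteq[0,1]^V$ and $\lambda\in(0,1)$ forces $y_i=z_i=\chi_I(i)$ coordinate by coordinate, so $y=z=\chi_I$.

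For the converse, I would proceed by contrapositive. Suppose $x\in\O(P)$ has some coordinate $x_i=a$ with $0<a<1$; the aim is to show $x$ is not a vertex by exhibiting it as the midpoint of two distinct points in $\O(P)$. Set $S_a := \{i\in V : x_i=a\}$ and choose $\epsilon>0$ smaller than $a$, than $1-a$, and than $\min\{\,|x_j-a| : x_j\neq a\,\}$. Define $y$ and $z$ by perturbing only the coordinates in $S_a$: put $y_i=a+\epsilon$ and $z_i=a-\epsilon$ for $i\in S_a$, and $y_i=z_i=x_i$ otherwise. Then $y,z\in[0,1]^V$ and $x=\tfrac{1}{2}(y+z)$. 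The essential step is verifying $y,z\in\O(P)$: for each relation $i\leq_P j$, a short case analysis on whether $i,j$ lie in $S_a$ (both, exactly one, or neither) shows $y_i\leq y_j$ and $z_i\leq z_j$, using that any $x_j>a$ exceeds $a$ by at least $\epsilon$ and any $x_i<a$ falls below $a$ by at least $\epsilon$.

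Once every vertex $x$ is known to lie in $\{0,1\}^V$, set $I := \{i : x_i=1\}$; then $x=\chi_I$, and the order-polytope inequality forces $I$ to be a filter, because $i\in I$ and $i\leq_P j$ give $1=x_i\leq x_j\leq 1$, hence $x_j=1$ and $j\in I$. The main obstacle is the perturbation step. Perturbing only one coordinate of value $a$ would generally break an equality $x_i=x_j=a$ coming from a pair $i\leq_P j$ with both coordinates in the level set; perturbing the entire level set $S_a$ by the same $\pm\epsilon$ is precisely what keeps every order inequality satisfied. No other step requires more than a direct check.
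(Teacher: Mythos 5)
Your proof is correct. Note that the paper itself does not prove this proposition; it cites it as Corollary~1.3 of Stanley's ``Two poset polytopes.'' In Stanley's paper the corollary is deduced from the face-lattice characterization of $\O(P)$ (his Theorem~1.2, which appears in this paper as Theorem~\ref{thm:face-partitions}): one adjoins $\hat 0$ and $\hat 1$ to $P$, characterizes faces via connected, compatible partitions of $P\cup\{\hat 0,\hat 1\}$, and reads off the vertices as the $0$-dimensional faces, i.e.\ the two-block partitions $\{J\cup\{\hat 1\},\,(V\setminus J)\cup\{\hat 0\}\}$ with $J$ a filter. You instead give a direct convexity argument: $0/1$ vectors in $[0,1]^V$ are automatically extreme, and any point with a fractional coordinate $a\in(0,1)$ is the midpoint of a $\pm\epsilon$ perturbation of the level set $S_a=\{i:x_i=a\}$. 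Perturbing the whole level set at once is the right move, since it preserves the tight inequalities $x_i=x_j=a$, and your choice of $\epsilon$ smaller than $a$, $1-a$, and $\min\{|x_j-a|:x_j\neq a\}$ keeps the loose inequalities and box constraints intact. This is more elementary and self-contained than Stanley's derivation, which buys generality (the full face lattice) you don't need here; your argument buys brevity and avoids the closure machinery entirely.
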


Given a toric poset, we can define the characteristic function $\chi_I$ of a toric filter similarly. However, one must be careful because under the canonical quotient to the torus, the vertices of every order polytope get identified to $(0,\dots,0)$. Therefore, we will still identify $\chi_I$ with a point in $\R^V$, not $\R^V/\Z^V$. 

\begin{cor}
  Let $P=P(G,[\omega])$ be a toric poset and $I\subseteq V$. Then $\chi_I$ is the characteristic function of a toric filter of $P$ if and only if $\chi_I$ is the vertex of $\O(P(G,\omega'))$ for some $\omega'\in[\omega]$. 
\end{cor}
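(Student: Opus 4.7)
The plan is to deduce this corollary as a direct combination of Proposition~\ref{prop:toric-ideals} and Stanley's vertex characterization of $\O(P)$ (\cite{Stanley:86}, Corollary 1.3), which was quoted in the excerpt immediately above the statement. Since the corollary is an ``if and only if'' between two conditions that are each immediately equivalent to the common middle condition ``$I$ is a filter of $P(G,\omega')$ for some $\omega' \in [\omega]$,'' there is essentially no extra combinatorial or geometric content to establish beyond remembering how we are identifying $\chi_I$ with a point of $\R^V$.

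For the forward implication, I would assume that $\chi_I$ is the characteristic function of a toric filter of $P = P(G,[\omega])$. By the equivalence (i)$\Leftrightarrow$(iii) in Proposition~\ref{prop:toric-ideals}, the set $I$ is a filter of the ordinary poset $P(G,\omega')$ for some $\omega' \in [\omega]$. Applying the cited corollary of Stanley, the characteristic vector $\chi_I \in \{0,1\}^V \subseteq \R^V$ is precisely a vertex of the order polytope $\O(P(G,\omega'))$, as desired.

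For the reverse implication, suppose that $\chi_I$ is a vertex of $\O(P(G,\omega'))$ for some $\omega' \in [\omega]$. Stanley's corollary then identifies $I$ as a filter of the ordinary poset $P(G,\omega')$, and Proposition~\ref{prop:toric-ideals} (applying (iii)$\Rightarrow$(i) this time) promotes $I$ to a toric filter of $P(G,[\omega']) = P(G,[\omega]) = P$, so $\chi_I$ is the characteristic function of a toric filter of $P$.

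The only point requiring any real care is the bookkeeping remark preceding the statement: since the canonical quotient $\R^V \to \R^V/\Z^V$ identifies every $\{0,1\}$-vector with $(0,\dots,0)$, one must be careful to regard $\chi_I$ as a point of $\R^V$ (equivalently, as a vertex of one of the order polytopes in the decomposition $\overline{c(P)} = \bigcup_{\omega' \in [\omega]} q(\O(P(G,\omega')))$ from Corollary~\ref{cor:order-polytope}) and not as its image in the torus. There is no additional geometric obstacle, since the whole substance of the statement is already built into Proposition~\ref{prop:toric-ideals}; the corollary is essentially a translation of that proposition from the language of filters to the language of order-polytope vertices via Stanley's dictionary.
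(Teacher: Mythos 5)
Your proof is correct and follows exactly the route the paper intends: combine the equivalence (i)$\Leftrightarrow$(iii) of Proposition~\ref{prop:toric-ideals} with Stanley's characterization of order-polytope vertices, with the caveat (which you correctly flag) that $\chi_I$ must be read as a point of $\R^V$, not of the torus. The paper states the corollary without proof precisely because this two-step translation is immediate.
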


Let $J_{\tor}(P)$ denote the set of toric filters of $P$. This has a natural poset structure by subset inclusion. Once again, there is a unique maximal element $I=V$ and minimal element $I=\varnothing$.  

\begin{prop}
  With respect to subset inclusion and cardinality rank function, $J_{\tor}(P)$ is a graded poset.
\end{prop}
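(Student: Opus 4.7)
The plan is to verify that cardinality $\rho(I) = |I|$ serves as the rank function on $J_{\tor}(P)$. Since $J_{\tor}(P)$ is manifestly a poset under inclusion, with minimum $\varnothing$ and maximum $V$ (both are toric filters via any of the characterizations in Proposition~\ref{prop:toric-ideals}), and $\rho$ is strictly monotone along proper containments, it suffices to establish that every cover relation $I \lessdot I'$ in $J_{\tor}(P)$ satisfies $|I'| = |I|+1$. Equivalently: whenever $I \subsetneq I'$ are toric filters with $|I' \setmin I| \geq 2$, one can produce a toric filter $I''$ strictly between them.

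To build such an $I''$, I would first invoke Proposition~\ref{prop:toric-ideals}(iv) applied to $I'$: choose a total toric extension $[w]$ of $P$ in which $I'$ appears cyclically consecutively, and rotate so that $I' = \{u_1,\dots,u_{k'}\}$ is the initial segment of $w = (u_1,\dots,u_{k'},v_{k'+1},\dots,v_n)$. Every cyclic sub-arc of $[w]$ is then a toric filter by (iv); in particular, both $I' \setmin \{u_1\}$ and $I' \setmin \{u_{k'}\}$ are toric filters. If either endpoint $u_1$ or $u_{k'}$ of $I'$'s arc fails to lie in $I$, the corresponding set $I' \setmin \{u_j\}$ is a toric filter of size $|I'|-1 \geq |I|+1$ that still contains $I$, yielding the desired strict intermediate $I''$.

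The main obstacle is the case in which both endpoints $u_1, u_{k'}$ lie in $I$, so that one cannot shrink $I'$ within this particular $[w]$ without losing an element of $I$. To handle this case, I would pass to Proposition~\ref{prop:toric-ideals}(ii): pick $\omega_I \in [\omega]$ for which $I$ is an ideal of $P(G,\omega_I)$, and then hunt for a minimal element $x$ of $V \setmin I$ in $\omega_I$ that lies in $I' \setmin I$. If such an $x$ exists, then $I \cup \{x\}$ is an ideal of $P(G, \omega_I)$, hence a toric filter by (ii), and it is a strict intermediate between $I$ and $I'$ when $|I' \setmin I| \geq 2$. The crux of the proof -- and where I expect the real technical work -- is demonstrating that $\omega_I$ and $x$ can always be chosen compatibly; equivalently, that one can rule out the pathology of every orientation in $[\omega]$ making $I$ an ideal pushing all minimal elements of $V \setmin I$ into $V \setmin I'$. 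I would attack this by combining (iv) on $I$ with (iv) on $I'$ to locate a total toric extension exhibiting $I$ as a consecutive arc whose ``next'' element (in the appropriate rotation) already lies inside $I' \setmin I$, which is the compatibility statement needed to feed into (ii).
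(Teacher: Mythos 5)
Your reduction is the right one: gradedness with rank function $\rho(I) = |I|$ is equivalent to showing that for toric filters $I\subsetneq I'$ with $|I'\setmin I|\geq 2$, some toric filter lies strictly between them, and in this respect your framing is more careful than the paper's. The paper's proof is substantially shorter and aims at a weaker target: it shows only that every nonempty toric filter $J$ contains a toric filter $J'$ with $|J'|=|J|-1$, by writing $J$ as an ideal of some $P(G,\omega')$ with $\omega'\in[\omega]$ via Proposition~\ref{prop:toric-ideals}, picking a source $v$ of $\omega'$ lying in $J$, and flipping $v$ to a sink so that $J\setmin\{v\}$ is an ideal of the flipped orientation, hence again a toric filter. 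The paper then simply asserts ``it suffices to show'' this. As your framing implicitly recognizes, that one-step descent property, even together with the complement symmetry of Remark~\ref{rem:toric-filter}, does \emph{not} by itself force a family of subsets of $V$ containing $\varnothing$ and $V$ to be graded by cardinality (one can build complement-closed families with this property that still contain a cover of corank $2$), so the paper is leaning on an unjustified strengthening; what you are trying to prove is what is actually needed.

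Unfortunately the proposal does not supply that missing strengthening. Your easy case is handled correctly: choosing a total toric extension $[w]$ in which $I'$ is an arc, if some endpoint $u$ of that arc is outside $I$ then $I'\setmin\{u\}$ is an arc of $[w]$ and hence a toric filter lying strictly between $I$ and $I'$. But in the hard case, where both endpoints lie in $I$, you switch to Proposition~\ref{prop:toric-ideals}(ii), asking for $\omega_I\in[\omega]$ making $I$ an ideal of $P(G,\omega_I)$ together with a $P(G,\omega_I)$-minimal element $x$ of $V\setmin I$ lying in $I'\setmin I$, so that $I\cup\{x\}$ is the desired intermediate toric filter. You explicitly concede that the existence of such a compatible pair $(\omega_I,x)$ is ``where I expect the real technical work,'' and you only sketch an attack (combining part (iv) applied to $I$ and to $I'$) rather than carrying it out. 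That compatibility is precisely the content of the proposition beyond bookkeeping, and it is not established. So there is a genuine gap at the crux, even though you have correctly located the crux. To close it along your lines you would need a lemma along the lines of: nested toric filters $I\subsetneq I'$ are simultaneously ideals of a common $P(G,\omega')$, or equivalently, $I$ appears as an arc in some total toric extension of $P$ one of whose boundary neighbors lies in $I'$.
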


\begin{proof}
  Let $P=P(G,[\omega])$ be a toric poset over $G=(V,E)$. It suffices to show that every nonempty toric filter $J$ contains a toric filter $J'$ of cardinality $|J'|=|J|-1$.

By Proposition~\ref{prop:toric-ideals}, the set $J$ is an order ideal of $P'=P(G,\omega')$ for some $\omega'\in[\omega]$. Choose any minimal element $v\in V$ of $P'$, which is a source of $\omega'$. Let $\omega''$ be the orientation obtained by flipping $v$ into a sink. The set $J':=J\setmin\{v\}$ is an ideal of $P(G,\omega'')$, and so by Proposition~\ref{prop:toric-ideals}, it is a toric filter of $P(G,[\omega''])=P(G,[\omega])$. 
\end{proof}

\begin{ex}\label{ex:C4-1234}
  Let $G=C_4$, the circle graph on $4$ vertices, and let
  $\omega\in\Acyc(G)$ be the orientation shown at left in
  Figure~\ref{fig:C4-1234}. The Hasse diagram of the poset $P=P(G,\omega)$
  is a line graph $\hat{G}^{\Hasse}(P)=L_4$, and the transitive closure is $\bar{G}(P)=K_4$. Since $V$ is a size-$4$ toric chain, it is totally cyclically ordered in
  every $\omega'\in[\omega]$, and the dashed edges are additionally
  implied by toric transitivity. Thus,
  \[
  L_4=\hat{G}^{\Hasse}(P(G,\omega))\subsetneq\hat{G}^{\torHasse}(P(G,[\omega]))=C_4\,,\qquad
  \bar{G}(P(G,\omega))=\bar{G}^{\tor}(P(G,[\omega]))=K_4\,.
  \]
  The $4$ torically equivalent orientations are shown in
  Figure~\ref{fig:C4-1234}. The only total toric extension of
  $P(G,[\omega])$ is 
  \[
  [(1,2,3,4)]=\{(1,2,3,4)\,,\;(2,3,4,1)\,,\;(3,4,1,2)\,,\;(4,1,2,3)]\,,
  \]
  and this is shown at right in Figure~\ref{fig:C4-1234-ideals}.
  The toric filters are all subsets of $V$ that appear as an initial
  segment in one of these four total orders. The poset
  $J_{\tor}(P(C_4,[\omega]))$ is shown at left in
  Figure~\ref{fig:C4-1234-ideals}. Note that unlike the ordinary poset
  case, it is not a lattice.

  \begin{figure}\centering
    \tikzstyle{to} = [draw,-stealth]
    \tikzstyle{To} = [draw,-stealth,dashed]
    \begin{tikzpicture}[scale=.5]
      \node (4) at (0,6) {$\mathbf{4}$};
      \node (3) at (2,4) {$\mathbf{3}$};
      \node (2) at (2,2) {$\mathbf{2}$};
      \node (1) at (0,0) {$\mathbf{1}$};
      \draw[to] (1) to (2);
      \draw[To] (1) to (3);
      \draw[to] (1) to (4);
      \draw[to] (2) to (3);
      \draw[To] (2) to (4);
      \draw[to] (3) to (4);
      \node at (-3.2,3) {$\omega\in\Acyc(C_4)$};
      \begin{scope}[shift={(5,0)}]
        \node at (-1.5,3) {$\equiv$};
        \node (4) at (0,6) {$\mathbf{1}$};
        \node (3) at (2,4) {$\mathbf{4}$};
        \node (2) at (2,2) {$\mathbf{3}$};
        \node (1) at (0,0) {$\mathbf{2}$};
        \draw[to] (1) to (2);
        \draw[To] (1) to (3);
        \draw[to] (1) to (4);
        \draw[to] (2) to (3);
        \draw[To] (2) to (4);
        \draw[to] (3) to (4);
      \end{scope}
      \begin{scope}[shift={(10,0)}]
        \node at (-1.5,3) {$\equiv$};
        \node (4) at (0,6) {$\mathbf{2}$};
        \node (3) at (2,4) {$\mathbf{1}$};
        \node (2) at (2,2) {$\mathbf{4}$};
        \node (1) at (0,0) {$\mathbf{3}$};
        \draw[to] (1) to (2);
        \draw[To] (1) to (3);
        \draw[to] (1) to (4);
        \draw[to] (2) to (3);
        \draw[To] (2) to (4);
        \draw[to] (3) to (4);
      \end{scope}
      \begin{scope}[shift={(15,0)}]
      \node at (-1.5,3) {$\equiv$};
      \node (4) at (0,6) {$\mathbf{3}$};
      \node (3) at (2,4) {$\mathbf{2}$};
      \node (2) at (2,2) {$\mathbf{1}$};
      \node (1) at (0,0) {$\mathbf{4}$};
      \draw[to] (1) to (2);
      \draw[To] (1) to (3);
      \draw[to] (1) to (4);
      \draw[to] (2) to (3);
      \draw[To] (2) to (4);
      \draw[to] (3) to (4);
      \end{scope}
    \end{tikzpicture}
    \caption{The four torically equivalent orientations to
      $\omega\in\Acyc(C_4)$, shown at left. The edges
      implied by toric transitivity are dashed.}\label{fig:C4-1234}
  \end{figure}
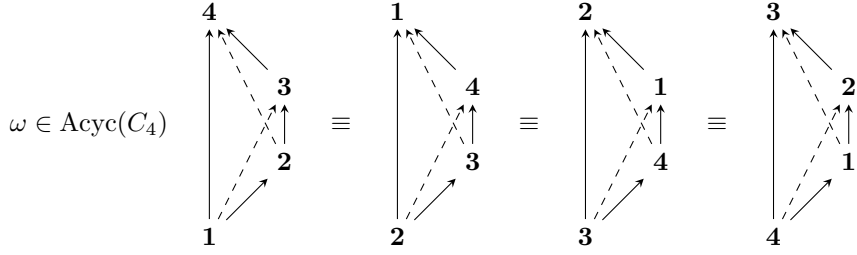

  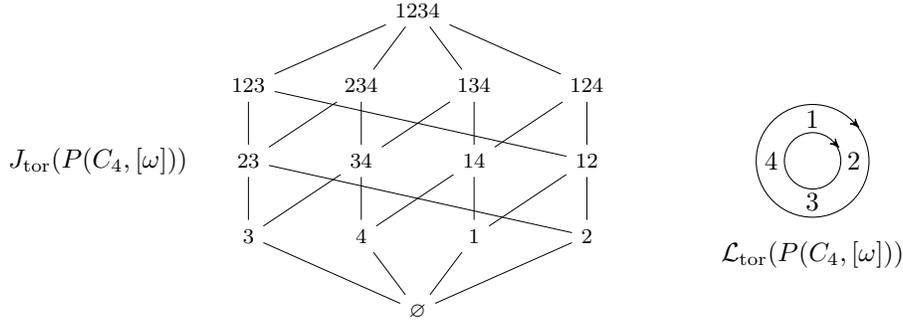
\begin{figure}\centering
    \tikzstyle{to} = [draw,-stealth]
    \tikzstyle{To} = [draw,-stealth,dashed]
    \begin{tikzpicture}[scale=.5]
      \begin{scope}[shift={(0,0)}]
        \node (1234) at (0,6) {\footnotesize $1234$};
        \node (123) at (-4.5,4) {\footnotesize $123$};
        \node (234) at (-1.5,4) {\footnotesize $234$};
        \node (134) at (1.5,4) {\footnotesize $134$};
        \node (124) at (4.5,4) {\footnotesize $124$};
        \node (23) at (-4.5,2) {\footnotesize $23$};
        \node (34) at (-1.5,2) {\footnotesize $34$};
        \node (14) at (1.5,2) {\footnotesize $14$};
        \node (12) at (4.5,2) {\footnotesize $12$};
        \node (3) at (-4.5,0) {\footnotesize $3$};
        \node (4) at (-1.5,0) {\footnotesize $4$};
        \node (1) at (1.5,0) {\footnotesize $1$};
        \node (2) at (4.5,0) {\footnotesize $2$};
        \node (null) at (0,-2) {\footnotesize $\varnothing$};
        \draw[very thin] (1234) -- (123) -- (23) -- (3) -- (null);
        \draw[very thin] (1234) -- (234) -- (34) -- (4) -- (null);
        \draw[very thin] (1234) -- (134) -- (14) -- (1) -- (null);
        \draw[very thin] (1234) -- (124) -- (12) -- (2) -- (null);
        \draw[very thin] (123) -- (12);
        \draw[very thin] (234) -- (23);
        \draw[very thin] (23) -- (2);
        \draw[very thin] (12) -- (1);
        \draw[very thin] (124) -- (14) -- (4);
        \draw[very thin] (134) -- (34) -- (3);  
        \node at (-8.5,2) {$J_{\tor}(P(C_4,[\omega]))$};
      \end{scope}
      \begin{scope}[shift={(10.5,2)},>=stealth']
        \draw[decoration={markings, mark=at position 0.125 with
            {\arrow{<}}}, postaction={decorate}] (0,0) circle(1.5cm);
        \draw[decoration={markings, mark=at position 0.125 with
            {\arrow{<}}}, postaction={decorate}] 
        (0cm,0cm) circle(0.75cm); 
        \draw (0,1.1) node{$1$}; 
        \draw (1.1,0) node{$2$}; 
        \draw (0,-1.1) node{$3$}; 
        \draw (-1.1,0) node{$4$};
        \node at (0,-2.5) {$\LLL_{\tor}(P(C_4,[\omega]))$};
      \end{scope}
    \end{tikzpicture}
    \caption{The toric filters of the toric poset $P(C_4,[\omega])$ form a
      poset that is not a lattice. The vertices should be thought of as subsets of $\{1,2,3,4\}$; order does not matter. That is, $134$ represents 
      $\{1,3,4\}$.}\label{fig:C4-1234-ideals}
  \end{figure}
 
\end{ex}

\begin{ex}\label{ex:C4-1324}
  Let $G=C_4$, as in Example~\ref{ex:C4-1234}, but now let
  $\omega'\in\Acyc(G)$ be the orientation shown at left in
  Figure~\ref{fig:C4-1324}. The only nonempty toric chains are the four
  vertices (size 1) and the four edges (size 2). Since $\omega'$ has no toric chains of
  size greater than $2$, the Hasse diagram and the transitive closure
  of the toric poset $P(G,[\omega'])$ are both $C_4$. Note that the
  transitive closure of the (ordinary) poset $P(G,\omega')$ contains
  the edge $\{1,3\}$, and so as graphs,
  $\bar{G}(P(G,\omega'))\neq\bar{G}^{\tor}(P(G,[\omega']))$. The $6$ torically
  equivalent orientations of $\omega'$ are shown in
  Figure~\ref{fig:C4-1324}. There are four total toric extensions of
  $P(G,[\omega'])$ which are shown on the right in
  Figure~\ref{fig:C4-1324-ideals}, as cyclic words. The toric filters
  are all subsets of $V$ that appear as a consecutive segment in one
  of these four total orders. The poset $J_{\tor}(P(C_4,[\omega']))$
  of toric filters is shown at left in
  Figure~\ref{fig:C4-1324-ideals}. In this particular case, the poset
  of toric filters is a lattice. In fact, it is isomorphic to a Boolean lattice, because every subset of $\{1,2,3,4\}$ appears consecutively (ignoring relative order) in one of the four cyclic words in Figure~\ref{fig:C4-1324-ideals}.

  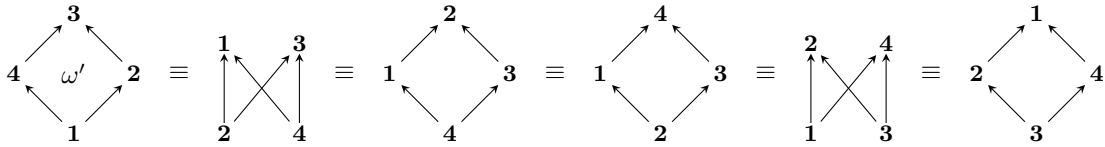
\begin{figure}\centering
    \tikzstyle{to} = [draw,-stealth]
    \tikzstyle{To} = [draw,-stealth,dashed]
    \begin{tikzpicture}[scale=.4,shorten >= -2.5pt, shorten <= -2.5pt]
      \node (3) at (0,4) {\small $\mathbf{3}$};
      \node (4) at (-2,2) {\small $\mathbf{4}$};
      \node (2) at (2,2) {\small $\mathbf{2}$};
      \node (1) at (0,0) {\small $\mathbf{1}$};
      \draw[to] (1) to (2);
      \draw[to] (1) to (4);
      \draw[to] (2) to (3);
      \draw[to] (4) to (3);
      \node at (0,2) {$\omega'$};
      \node at (3.5,2) {$\equiv$};
      \begin{scope}[shift={(5,0)}]
        \node (2) at (0,0) {\small $\mathbf{2}$};
        \node (1) at (0,3) {\small $\mathbf{1}$};
        \node (3) at (2.5,3) {\small $\mathbf{3}$};
        \node (4) at (2.5,0) {\small $\mathbf{4}$};
        \draw[to] (2) to (1);
        \draw[to] (4) to (1);
        \draw[to] (2) to (3);
        \draw[to] (4) to (3);
        \node at (4,2) {$\equiv$};
      \end{scope}
      \begin{scope}[shift={(12.5,0)}]
        \node (2) at (0,4) {\small $\mathbf{2}$};
        \node (1) at (-2,2) {\small $\mathbf{1}$};
        \node (3) at (2,2) {\small $\mathbf{3}$};
        \node (4) at (0,0) {\small $\mathbf{4}$};
        \draw[to] (4) to (1);
        \draw[to] (4) to (3);
        \draw[to] (1) to (2);
        \draw[to] (3) to (2);
        \node at (3.5,2) {$\equiv$};
      \end{scope}
      \begin{scope}[shift={(19.5,0)}]
        \node (4) at (0,4) {\small $\mathbf{4}$};
        \node (1) at (-2,2) {\small $\mathbf{1}$};
        \node (3) at (2,2) {\small $\mathbf{3}$};
        \node (2) at (0,0) {\small $\mathbf{2}$};
        \draw[to] (1) to (4);
        \draw[to] (3) to (4);
        \draw[to] (2) to (1);
        \draw[to] (2) to (3);
        \node at (3.5,2) {$\equiv$};
      \end{scope}
      \begin{scope}[shift={(24.5,0)}]
        \node (1) at (0,0) {\small $\mathbf{1}$};
        \node (2) at (0,3) {\small $\mathbf{2}$};
        \node (4) at (2.5,3) {\small $\mathbf{4}$};
        \node (3) at (2.5,0) {\small $\mathbf{3}$};
        \draw[to] (1) to (2);
        \draw[to] (1) to (4);
        \draw[to] (3) to (2);
        \draw[to] (3) to (4);
        \node at (4,2) {$\equiv$};
      \end{scope}
      \begin{scope}[shift={(32,0)}]
        \node (1) at (0,4) {\small $\mathbf{1}$};
        \node (2) at (-2,2) {\small $\mathbf{2}$};
        \node (4) at (2,2) {\small $\mathbf{4}$};
        \node (3) at (0,0) {\small $\mathbf{3}$};
        \draw[to] (3) to (2);
        \draw[to] (3) to (4);
        \draw[to] (2) to (1);
        \draw[to] (4) to (1);
      \end{scope}
  \end{tikzpicture}
  \caption{The six torically equivalent orientations to
    $\omega'\in\Acyc(C_4)$, shown at left.}\label{fig:C4-1324}
\end{figure}

\begin{figure}\centering
  \tikzstyle{to} = [draw,-stealth]
  \tikzstyle{To} = [draw,-stealth,dashed]
  \begin{tikzpicture}[scale=.5,>=stealth']
    \begin{scope}[shift={(0,0)}]
      \node (1324) at (0,6) {\footnotesize $1324$};
      \node (132) at (-4.5,4) {\footnotesize $132$};
      \node (124) at (-1.5,4) {\footnotesize $124$};
      \node (134) at (1.5,4) {\footnotesize $134$};
      \node (324) at (4.5,4) {\footnotesize $243$};
      \node (12) at (-7.5,2) {\footnotesize $12$};
      \node (13) at (-4.5,2) {\footnotesize $13$};
      \node (14) at (-1.5,2) {\footnotesize $14$};
      \node (23) at (1.5,2) {\footnotesize $23$};
      \node (24) at (4.5,2) {\footnotesize $24$};
      \node (34) at (7.5,2) {\footnotesize $34$};
      \node (1) at (-4.5,0) {\footnotesize $1$};
      \node (2) at (-1.5,0) {\footnotesize $2$};
      \node (3) at (1.5,0) {\footnotesize $3$};
      \node (4) at (4.5,0) {\footnotesize $4$};
      \node (null) at (0,-2) {\footnotesize $\varnothing$};
      \draw[very thin] (1324) -- (124);
      \draw[very thin] (124) -- (12);
      \draw[very thin] (124) -- (14);
      \draw[very thin] (124) -- (24);
      \draw[very thin] (1324) -- (132);
      \draw[very thin] (132) -- (13);
      \draw[very thin] (132) -- (12);
      \draw[very thin] (132) -- (23);
      \draw[very thin] (1324) -- (134);
      \draw[very thin] (134) -- (13);
      \draw[very thin] (134) -- (14);
      \draw[very thin] (134) -- (34);
      \draw[very thin] (1324) -- (324);
      \draw[very thin] (324) -- (23);
      \draw[very thin] (324) -- (24);
      \draw[very thin] (324) -- (34);
      \draw[very thin] (12) -- (1);
      \draw[very thin] (12) -- (2);
      \draw[very thin] (13) -- (1);
      \draw[very thin] (13) -- (3);
      \draw[very thin] (14) -- (1);
      \draw[very thin] (14) -- (4);
      \draw[very thin] (23) -- (2);
      \draw[very thin] (23) -- (3);
      \draw[very thin] (24) -- (2);
      \draw[very thin] (24) -- (4);
      \draw[very thin] (34) -- (3);
      \draw[very thin] (34) -- (4);
      \draw[very thin] (1) -- (null);
      \draw[very thin] (2) -- (null);
      \draw[very thin] (3) -- (null);
      \draw[very thin] (4) -- (null);
      \node at (-6,-2) {$J_{\tor}(P(C_4,[\omega']))$};
    \end{scope}
    \begin{scope}[shift={(13,4.5)}]
      \draw[decoration={markings, mark=at position 0.125 with
          {\arrow{<}}}, postaction={decorate}] (0,0) circle(1.5cm);
      \draw[decoration={markings, mark=at position 0.125 with
          {\arrow{<}}}, postaction={decorate}] 
      (0cm,0cm) circle(0.75cm); 
      \draw (0,1.1) node{$1$}; 
      \draw (1.1,0) node{$3$}; 
      \draw (0,-1.1) node{$2$}; 
      \draw (-1.1,0) node{$4$};
    \end{scope}
    \begin{scope}[shift={(13,.5)}]
      \draw[decoration={markings, mark=at position 0.125 with
          {\arrow{<}}}, postaction={decorate}] (0,0) circle(1.5cm);
      \draw[decoration={markings, mark=at position 0.125 with
          {\arrow{<}}}, postaction={decorate}] 
      (0cm,0cm) circle(0.75cm); 
      \draw (0,1.1) node{$1$}; 
      \draw (1.1,0) node{$3$}; 
      \draw (0,-1.1) node{$4$}; 
      \draw (-1.1,0) node{$2$};
      \node at (2.5,-2.5) {$\LLL_{\tor}(P(C_4,[\omega']))$};
    \end{scope}
    \begin{scope}[shift={(18,4.5)}]
      \draw[decoration={markings, mark=at position 0.125 with
          {\arrow{<}}}, postaction={decorate}] (0,0) circle(1.5cm);
      \draw[decoration={markings, mark=at position 0.125 with
          {\arrow{<}}}, postaction={decorate}] 
        (0cm,0cm) circle(0.75cm); 
        \draw (0,1.1) node{$3$}; 
        \draw (1.1,0) node{$1$}; 
        \draw (0,-1.1) node{$2$}; 
        \draw (-1.1,0) node{$4$};
      \end{scope}
      \begin{scope}[shift={(18,.5)}] 
        \draw[decoration={markings, mark=at position 0.125 with
            {\arrow{<}}}, postaction={decorate}] (0,0) circle(1.5cm);
        \draw[decoration={markings, mark=at position 0.125 with
            {\arrow{<}}}, postaction={decorate}] 
        (0cm,0cm) circle(0.75cm); 
        \draw (0,1.1) node{$3$}; 
        \draw (1.1,0) node{$1$}; 
        \draw (0,-1.1) node{$4$}; 
        \draw (-1.1,0) node{$2$};
      \end{scope}
  \end{tikzpicture}
  \caption{The toric filters of the toric poset $P(C_4,[\omega'])$ form
    a poset that happens to be a lattice. Each toric filter appears as a
    consecutive sequence in one of the total toric extensions, shown
    at right.}\label{fig:C4-1324-ideals}
\end{figure}
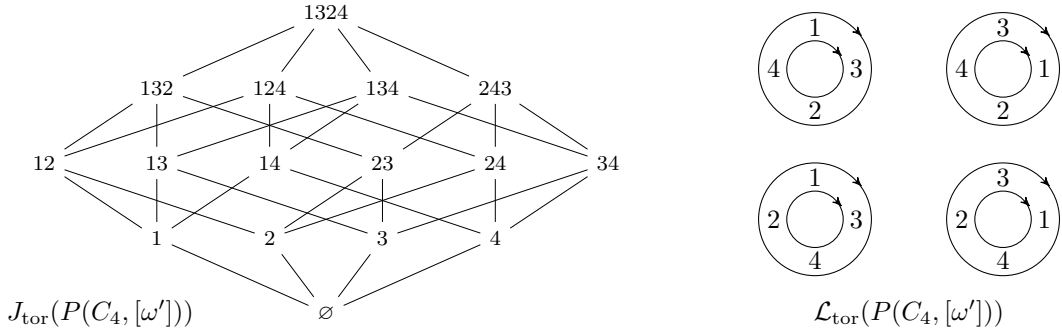

\end{ex}

\section{Application to Coxeter groups}\label{sec:coxeter}

A Coxeter system is a pair $(W,S)$ consisting of a \emph{Coxeter group} $W$
generated by a finite set of involutions $S=\{s_1,\dots,s_n\}$ with presentation
\[
W=\<S\mid s_i^2=1,\;(s_is_j)^{m_{i,j}}=1\>
\]
where $2\leq m_{i,j}\leq\infty$ for $i\neq j$. The corresponding
Coxeter graph $\Gamma$ has vertex set $V=S$ and edges $\{i,j\}$ for
each $m_{i,j}\geq 3$ labeled with $m_{i,j}$ (label usually omitted if
$m_{i,j}=3$). A \emph{Coxeter element} is the product of the
generators in some order, and every Coxeter element $c\in W$ defines a
partial ordering on $S$ via an acyclic orientation
$\omega(c)\in\Acyc(\Gamma)$: Orient $s_i\to s_j$ iff $s_i$ precedes
$s_j$ in some (equivalently, every) reduced expression for
$c$. Conjugating a Coxeter element by an initial generator (note that
$s_i=s_i^{-1}$) cyclically shifts it:
\[
s_{x_1}(s_{x_1}s_{x_2}\cdots s_{x_n})s_{x_1}=s_{x_2}\cdots s_{x_n}s_{x_1}\,,
\]
and the corresponding acyclic orientation differs by reversing the
orientations of all edges incident to $s_{x_1}$, thereby converting it
from a source to a sink vertex. In 2009, H.~ and K.~Eriksson
showed~\cite{Eriksson:09} that two Coxeter elements $c$ and $c'$ are
conjugate if and only if $\omega(c)\equiv\omega(c')$. Thus, there are
bijections between the set $\Cox(W)$ of Coxeter elements and
$\Acyc(\Gamma)$, as well as between the corresponding conjugacy
classes and the toric equivalence classes:
\begin{align*}
  \def\arraystretch{1.5}
  \begin{array}{cllllc}
    \Cox(W)\longto\Acyc(\Gamma) &&&&&
    \Conj(\Cox(W))\longto\Acyc(\Gamma)/\!\!\equiv \\
    \,\,c\longmapsto\omega(c) &&&&& \,\,\cl_W(c)\longmapsto[\omega(c)]
  \end{array}
\end{align*}
As an example, suppose $W=W(\widetilde{A}_3)$, the affine Coxeter
group of type $A$, and let $c=s_1s_2s_3s_4$ (using $s_4$ instead of
the usual $s_0$), as shown below:
\begin{center}
  \tikzstyle{to} = [draw,-stealth]
  \tikzstyle{e} = [draw]
  \begin{tikzpicture}[scale=.7,shorten >= -2pt, shorten <= -2pt]
    \node at (-5,1) {$c=s_1s_2s_3s_4\in W(\widetilde{A}_3)$};
    \node (s4) at (0,0) {$s_4$};
    \node (s1) at (0,2) {$s_1$};
    \node (s3) at (2,0) {$s_3$};
    \node (s2) at (2,2) {$s_2$};
    \draw (s1) -- (s2) -- (s3) -- (s4) -- (s1);
    \node at (1,1) {$\Gamma$};
    \begin{scope}[shift={(5,0)}]
      \node (s4) at (0,0) {$s_4$};
      \node (s1) at (0,2) {$s_1$};
      \node (s3) at (2,0) {$s_3$};
      \node (s2) at (2,2) {$s_2$};
      \draw[to] (s1) -- (s2);
      \draw[to] (s2) -- (s3);
      \draw[to] (s3) -- (s4);
      \draw[to] (s1) -- (s4);
      \node at (1,1) {$\omega(c)$};
    \end{scope}
  \end{tikzpicture}
\end{center}
The conjugate Coxeter elements to $c\in W$ are thus $s_1s_2s_3s_4$,
$s_2s_3s_4s_1$, $s_3s_4s_1s_2$, and $s_4s_1s_2s_3$. The toric filters
of $P(G,[\omega(c)])$ describe which subwords can appear in an initial
segment of some reduced expression of one of these conjugate Coxeter
elements. The poset of these toric filters was shown in
Figure~\ref{fig:C4-1234-ideals} (replace $k$ with $s_k$).

Now, consider the element $c'=s_1s_3s_2s_4$ in $W(\widetilde{A}_3)$,
as shown below:
\begin{center}
  \tikzstyle{to} = [draw,-stealth]
  \tikzstyle{e} = [draw]
  \begin{tikzpicture}[scale=.7,shorten >= -2pt, shorten <= -2pt]
    \node at (-5,1) {$c'=s_1s_3s_2s_4\in W(\widetilde{A}_3)$};
    \node (s4) at (0,0) {$s_4$};
    \node (s1) at (0,2) {$s_1$};
    \node (s3) at (2,0) {$s_3$};
    \node (s2) at (2,2) {$s_2$};
    \draw (s1) -- (s2) -- (s3) -- (s4) -- (s1);
    \node at (1,1) {$\Gamma$};
    \begin{scope}[shift={(5,0)}]
      \node (s4) at (0,0) {$s_4$};
      \node (s1) at (0,2) {$s_1$};
      \node (s3) at (2,0) {$s_3$};
      \node (s2) at (2,2) {$s_2$};
      \draw[to] (s1) -- (s2);
      \draw[to] (s3) -- (s2);
      \draw[to] (s3) -- (s4);
      \draw[to] (s1) -- (s4);
      \node at (1,1) {$\omega(c')$};
    \end{scope}
  \end{tikzpicture}
\end{center}
The toric equivalence class containing $\omega(c')$ has six
orientations, which were shown in Figure~\ref{fig:C4-1324}. Each of
these describes a unique conjugate Coxeter element:
\[
\begin{array}{rrrrrr}
  s_1s_2s_4s_3\,\quad\;\;& s_2s_4s_1s_3\,\quad\;\;& s_4s_1s_3s_2\,\quad\;\;&
  s_2s_1s_3s_4\,\quad\;\;& s_1s_3s_2s_4\,\quad\;\;& s_3s_2s_4s_1\, \\
  =s_1s_4s_2s_3\,\quad\;\;& =s_2s_4s_3s_1\,\quad\;\;& =s_4s_3s_1s_2\,\quad\;\;&
  =s_2s_3s_1s_4\,\quad\;\;& =s_1s_3s_4s_2\,\quad\;\;& =s_3s_4s_2s_1\, \\
                 \quad\;\;& =s_4s_2s_3s_1\,\quad\;\;& &
  & =s_3s_1s_4s_2\,\quad\;\;& \\
                 \quad\;\;& =s_4s_2s_1s_3\,\quad\;\;& &
  & =s_3s_1s_2s_4\,\quad\;\;& \\
\end{array}
\]
These are listed above so that the Coxeter element in the $i^{\rm th}$
column corresponds to the $i^{\rm th}$ orientation in
Figure~\ref{fig:C4-1324}. The linear extensions of each orientation
describe the reduced expressions of the corresponding Coxeter element,
which are listed in the same column above.

The toric poset $P(G,[\omega(c')])$ has four total toric extensions,
and these were shown on the right in Figure~\ref{fig:C4-1324-ideals}
(replace $k$ with $s_k$). The toric filters of $P(G,[\omega(c')])$
correspond to the subsets that appear consecutively in one of these
cyclic words. The poset $J_{\tor}(P(C_4,[\omega']))$ of toric filter
appears on the left in Figure~\ref{fig:C4-1324-ideals}.

\section{Concluding remarks}\label{sec:conclusion}

In this paper, we further developed the theory of toric posets by formalizing the notion of toric intervals, morphisms, and order ideals. In some regards, much of the theory is fairly analogous to that of ordinary posets, though there are some noticeable differences. Generally speaking, the one recurring theme was the characterization of the toric analogue of a feature in $P(G,[\omega])$ by the characterization of the ordinary version of that feature in $P(G,\omega')$ either for some $\omega'\in[\omega]$, or for all $\omega'\in[\omega]$. 

One question that arises immediately is whether there is a toric order complex. While there may exist such an object, there are some difficulties unique to the toric case. For example, a poset is completely determined by its chains, in that if one specifies which subsets of $V$ are the chains of $P$, and then the toric order of the elements within each chain, the entire poset can be reconstructed. This is not the case for toric posets, as shown in Figure~\ref{fig:C5}. Here, two torically non-equivalent orientations of $C_5$ are given, but the toric posets $P(C_5,[\omega])$ and $P(C_5,[\omega'])$ have the same sets of toric chains: the $5$ vertices and the $5$ edges. 

\begin{figure} \centering
  \tikzstyle{to} = [draw,-stealth]
  \tikzstyle{To} = [draw,-stealth,dashed]
  \begin{tikzpicture}[scale=.5,shorten >= -2pt, shorten <= -2pt]
    \node (4) at (0,6) {$\mathbf{4}$};
    \node (3) at (0,4) {$\mathbf{3}$};
    \node (2) at (0,2) {$\mathbf{2}$};
    \node (1) at (0,0) {$\mathbf{1}$};
    \node (5) at (2,2) {$\mathbf{5}$};
    \draw[to] (1) to (2);
    \draw[to] (2) to (3);
    \draw[to] (3) to (4);
    \draw[to] (1) to (5);
    \draw[to] (5) to (4);
    \node at (3.75,3) {$\not\equiv$};
    \node at (-2,5) {$\omega$};
    \begin{scope}[shift={(5.5,0)}]
      \node (1) at (2,6) {$\mathbf{3}$};
      \node (2) at (2,4) {$\mathbf{4}$};
      \node (3) at (2,2) {$\mathbf{5}$};
      \node (4) at (2,0) {$\mathbf{1}$};
      \node (5) at (0,2) {$\mathbf{2}$};
      \draw[to] (2) to (1);
      \draw[to] (3) to (2);
      \draw[to] (4) to (3);
      \draw[to] (5) to (1);
      \draw[to] (4) to (5);
      \node at (4,5) {$\omega'$};
    \end{scope}
  \end{tikzpicture}
  \caption{Two non-torically equivalent orientations
    $\omega\not\equiv\omega'$ in $\Acyc(C_5)$ for which
    $P(C_5,[\omega])$ and $P(C_5,[\omega'])$ have the same set of
    toric chains.}\label{fig:C5}
\end{figure}
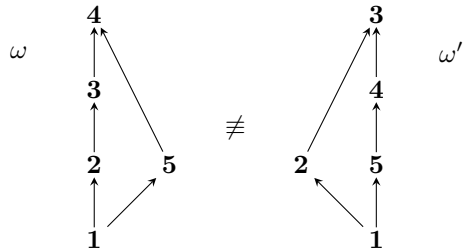

The fact that an ordinary poset is determined by its chains just means
that once one specifies the total order between every chain of size
$k\geq 2$, then the entire partial order is determined. The problem
for toric posets, which we encountered in this paper, is that \emph{every} size-$2$ subset is trivially
cyclically ordered, whether it lies on a toric chain or not. In other
words, a total order can be defined on two elements, but a cyclic
order needs three. The analogous statement for toric posets would be
that specifying the total cyclic order between every toric chain of
size $k\geq 3$ specifies the entire toric order. Such a statement
would establish the intuitive idea that knowing all total cyclic
orders should determine the toric partial order ``modulo the size-$2$
toric chains.'' Current works suggests that there is an analogue of the aforementioned properties for toric posets, but it requires a new generalization of the concept of a chain. The details are too preliminary and complicated to describe here, and it is not clear whether it will lead to a combinatorial object such as a toric order complex. Without this, there might not be a natural way to study toric posets topologically. 

Another important feature of ordinary posets that does not seem to have any obvious toric analogue are M\"obius functions, and this is vital to much of the theory of ordinary posets. Recall the analogy from the Introduction about how topology is like ``analysis without the metric.'' Similarly, many of the basic features of ordinary posets have toric analogues, despite the fact that toric posets have no binary relation. However, much of the more advanced theory is likely to fail because one also seems to lose valuable tools such as an order complex and a M\"obius function. Even the theory that does carry over has its shortcomings. For example, morphisms have a simple combinatorial characterization using the binary relation: $i<_P j$ implies $\phi(i)<_{P'}\phi(j)$. The geometric definition requires a patchwork of quotients, extensions, and inclusions. It would be desirable to have a more ``holistic'' characterization of toric poset morphisms, though it is not clear that that such a description should exist.

Finally, the connection of toric posets to Coxeter groups is the
subject of a paper nearing completion on cyclic reducibility and
conjugacy in Coxeter groups. Loosely speaking, reduced expressions can
be formalized as labeled posets called heaps. This was formalized by
Stembridge~\cite{Stembridge:96,Stembridge:98} in the 1990s. The
\emph{fully commutative} (FC) elements are those such that ``long
braid relations'' (e.g., $sts\mapsto tst$) do not arise. Equivalently,
they have a unique heap. The cyclic version of the FC elements are the
\emph{cyclically fully commutative} CFC elements, introduced by the
author and collaborators in~\cite{Boothby:12}. In 2013, T.~Marquis
showed that two CFC elements are conjugate if and only if their heaps
are torically equivalent~\cite{Marquis:14}. These elements were
further studied by M.~P\'etr\'eolle~\cite{Petreolle:14}. In our
forthcoming paper, we will formalize the notion of a toric heap, which
will essentially be a labeled toric poset. This allows us to formalize
objects such as cyclic words, cyclic commutativity classes, and
develop a theory of cyclic reducibility in Coxeter groups using the
toric heap framework.

\bibliographystyle{alpha} 

\newcommand{\etalchar}[1]{$^{#1}$}

\end{document}